
\newfont{\bcb}{msbm10}
\newfont{\matb}{cmbx10}
\newfont{\got}{eufm10}

\documentclass[12pt]{amsart}
\usepackage{amsmath, amsthm, amscd, amsfonts, amssymb, latexsym, graphicx, color}
\usepackage[bookmarksnumbered, colorlinks, plainpages, hypertex]{hyperref}

\usepackage[cp1250]{inputenc}

\usepackage{amsmath,amsthm}
\usepackage{amssymb,latexsym}
\usepackage{enumerate}

\newtheorem{theorem}{Theorem}[section]
\newtheorem{lemma}[theorem]{Lemma}
\newtheorem{proposition}[theorem]{Proposition}
\newtheorem{corollary}[theorem]{Corollary}
\theoremstyle{definition}

\theoremstyle{remark}
\newtheorem{remark}[theorem]{Remark}
\numberwithin{equation}{section}

\begin{document}

\title[Algebraic geometry over rank one valued fields]{Some results
       of algebraic geometry\\ over Henselian rank one valued fields}

\author[Krzysztof Jan Nowak]{Krzysztof Jan Nowak}


\subjclass[2000]{Primary: 12J25, 03C10; Secondary: 14G27, 14P10.}

\keywords{Closedness theorem, descent property for blow-ups, curve
selection, Łojasiewicz inequality, hereditarily rational
functions, regulous functions and sets, Nullstellensatz, Cartan's
Theorems~A and~B}



\begin{abstract}
We develop geometry of affine algebraic varieties in $K^{n}$ over
Henselian rank one valued fields $K$ of equicharacteristic zero.
Several results are provided including: the projection $K^{n}
\times \mathbb{P}^{m}(K) \to K^{n}$ and blow-ups of the
$K$-rational points of smooth $K$-varieties are definably closed
maps; a descent property for blow-ups; curve selection for
definable sets; a general version of the \L{}ojasiewicz inequality
for continuous definable functions on subsets locally closed in
the $K$-topology and extending continuous hereditarily rational
functions, established for the real and $p$-adic varieties in our
joint paper with J.~Koll\'{a}r. The descent property enables
application of resolution of singularities and transformation to a
normal crossing by blowing up in much the same way as over the
locally compact ground field. Our approach relies on quantifier
elimination due to Pas and a concept of fiber shrinking for
definable sets, which is a relaxed version of curve selection. The
last three sections are devoted to the theory of regulous
functions and sets over such valued fields. Regulous geometry over
the real ground field $\mathbb{R}$ was developed by
Fichou--Huisman--Mangolte--Monnier. The main results here are
regulous versions of Nullstellensatz and Cartan's Theorems~A
and~B.
\end{abstract}

\maketitle


\section{Introduction} In this paper, we develop geometry
of affine algebraic varieties in
$K^{n}$ over Henselian rank one valued fields $K$ of
equicharacteristic zero with valuation $v$, value group $\Gamma$,
valuation ring $R$ and residue field $\Bbbk$. Every rank one
valued field has a metric topology induced by its absolute value.
Examples of such fields are the quotient fields of the rings of
formal power series and of Puiseux series with coefficients from a
field $\Bbbk$ of characteristic zero as well as the fields of Hahn series
(maximally complete valued fields also called Malcev--Neumann
fields; cf.~\cite{Kap}):
$$ \Bbbk((t^{\Gamma})) := \left\{ f(t) = \sum_{\gamma \in \Gamma} \
   a_{\gamma}t^{\gamma} : \ a_{\gamma} \in \Bbbk, \ \text{supp}\,
   f(t) \ \text{is well ordered} \right\}. $$


Let $X$ be a $K$-algebraic variety. We always assume that $X$ is
reduced but we allow it to be reducible. The set $X(K)$ of its
$K$-rational points ($K$-points for short) inherits from $K$ a
topology, called the $K$-topology. In this paper, we are going to
investigate continuous and differentiable functions $X(K) \to K$
that come from algebraic geometry and their zero sets. Therefore,
we shall (and may) most often assume that $X$ is an affine
$K$-variety such that $X(K)$ is Zariski dense in $X$. Throughout
the paper, by ''definable'' we shall mean ''definable with
parameters''.


Several results concerning algebraic geometry over such ground
fields $K$ are established. Let $\mathcal{L}$ be the 3-sorted
language of Denef--Pas. We prove that the projection
$$ K^{n} \times \mathbb{P}^{m}(K) \to K^{n} $$
is an $\mathcal{L}$-definably closed map (Theorem~\ref{clo-th}).
Further, we shall draw several conclusions, including the theorem
that blow-ups of the $K$-points of smooth $K$-varieties are
definably closed maps (Corollary~\ref{clo-th-cor-3}), a descent
property for such blow-ups (Corollary~\ref{clo-th-cor-4}), curve
selection for $\mathcal{L}$-definable sets
(Proposition~\ref{GCSL}) and for valuative semialgebraic sets
(Proposition~\ref{CSL}) as well as a general version of the
\L{}ojasiewicz inequality for continuous $\mathcal{L}$-definable
functions on subsets locally closed in the $K$-topology
(Proposition~\ref{L-2}). Also given is a theorem on extending
continuous hereditarily rational functions over such ground fields
(Theorem~\ref{ext-th}), established for the real and $p$-adic
varieties in our joint paper~\cite{K-N} with J.~Koll\'{a}r. The
proof makes use of the descent property and the \L{}ojasiewicz
inequality. The descent property enables application of resolution
of singularities and transformation to a normal crossing by
blowing up (see \cite[Chap.~III]{Kol} for references and relatively
short proofs) in much the same way as over the locally compact ground
field. Our approach relies on quantifier elimination due to Pas
and on a certain concept of fiber shrinking for definable sets,
which is a relaxed version of curve selection. Note that this
paper comprises our two earlier preprints~\cite{Now1,Now2}.

\begin{remark}
This paper is principally devoted to geometry over rank one valued
fields (in other words, fields with non-archimedean absolute
value). Therefore, from Section~3 on, we shall most often assume
that so is the ground field $K$. Nevertheless, it is plausible
that the closedness theorem (Theorem~\ref{CD}) and curve selection
(Propositions~\ref{CSL} and~\ref{GCSL}) hold over arbitrary
Henselian valued fields.
\end{remark}

We should emphasize that our approach to the subject of this paper
is possible just because the language $\mathcal{L}$ in which we
investigate valued fields is not too rich; in particular, it does
not contain the inclusion language on the auxiliary sorts and the
only symbols of $\mathcal{L}$ connecting the sorts are two
functions from the main $K$-sort to the auxiliary $\Gamma$-sort
and $\Bbbk$-sort. Hence and by elimination of $K$-quantifiers, the
$\mathcal{L}$-definable subsets of the products of the two
auxiliary sorts are precisely finite unions of the Cartesian
products of sets definable in those two sorts. This allows us to
reduce our reasonings to an analysis of ordinary cells (i.e.\
fibers of a cell in the sense of Pas).


The organization of the paper is as follows. In Section~2, we set
up notation and terminology including, in particular, the language
$\mathcal{L}$ of Denef--Pas and the concept of a cell. We recall
the theorems on quantifier elimination and on preparation cell
decomposition, due to Pas~\cite{Pa1}. Next we draw some
conclusions as, for instance, Corollary~\ref{def-fun} on definable
functions and Corollary~\ref{CD1} on certain
decompositions of definable sets. The former will be applied in Section~5
and the latter is crucial for our
proof of the closedness theorem (Theorem~\ref{clo-th}), which is
stated in Section~3 together with several direct corollaries,
including the descent property.
Section~4 gives a proof (being of algorithmic character) of this
theorem for the case where the value group $\Gamma$ is discrete.


In Section~5, we study $\mathcal{L}$-definable functions of one
variable. A result playing an important role in the sequel is the
theorem on existence of the limit (Proposition~\ref{limit-th}).
Its proof makes use of Puiseux's theorem for the local ring of
convergent power series. In Section~6, we introduce a certain concept of fiber shrinking
for $\mathcal{L}$-definable sets (Proposition~\ref{FS}),
which is a relaxed version of curve selection. Section~7
provides a proof of the closedness theorem (Theorem~\ref{clo-th})
for the general case. This proof makes use of fiber shrinking and
existence of the limit for functions of one variable.


In the subsequent three sections, some further conclusions from
the closedness theorems are drawn. Section~8 provides some
versions of curve selection: for arbitrary $\mathcal{L}$-definable
sets and for valuative semialgebraic sets. The next section is
devoted to a general version of the \L{}ojasiewicz inequality for
continuous $\mathcal{L}$-definable functions on subsets locally
closed in the $K$-topology (Proposition~\ref{L-2}). In Section~10,
the theorem on extending continuous hereditarily rational
functions (established for the real and $p$-adic varieties
in~\cite{K-N}) is carried over to the case where the ground field
$K$ is a Henseliam rank one valued field of equicharacteristic
zero. Let us mention that in real algebraic geometry applications
of continuous hereditarily rational functions and the extension
theorem, in particular, are given in the papers~\cite{K-1,K-2,K-3}
and~\cite{K-K}, which discuss rational maps into spheres and
stratified-algebraic vector bundles on real algebraic varieties.


The last three sections are devoted to the theory of regulous
functions and sets over Henselian rank one valued fields of
equicharacteristic zero. Regulous geometry over the real ground
field $\mathbb{R}$ was developed by
Fichou--Huisman--Mangolte--Monnier~\cite{FHMM}. In Section~11 we
set up notation and terminology as well as provide basic results
about regulous functions and sets, including the noetherianity of
the constructible and regulous topologies. Those results are valid
over arbitrary fields with the density property. The next section
establishes a regulous version of Nullstellensatz
(Theorem~\ref{nullstellen}), valid over Henselian rank one valued
fields of equicharacteristic zero. The proof relies on the
\L{}ojasiewicz inequality (Proposition~\ref{L-2}). Also drawn are
several conclusions, including the existence of a one-to-one
correspondence between the radical ideals of the ring of regulous
functions and the closed regulous subsets, or one-to-one
correspondences between the prime ideals of that ring, the
irreducible regulous subsets and the irreducible Zariski closed
subsets (Corollaries~\ref{nullstellen-cor-1}
and~\ref{compari-th-cor-1}).


Section~13 provides an exposition of the theory of quasi-coherent
regulous sheaves, which generally follows the approach given in
the real algebraic case by
Fichou--Huisman--Mangolte--Monnier~\cite{FHMM}. It is based on the
equivalence of categories between the category of
$\widetilde{\mathcal{R}}^{k}$-modules on the affine scheme
$\text{\rm Spec} \left( \mathcal{R}^{k}(K^{n}) \right)$ and the category of
$\mathcal{R}^{k}$-modules on $K^{n}$ which, in turn, is a direct consequence of the
one-to-one correspondences mentioned above. The main results here
are the regulous versions of Cartan's Theorems~A and~B. We also
establish a criterion for a continuous function on an affine
regulous subvariety to be regulous (Proposition~\ref{criterion}),
which relies on our theorem on extending continuous hereditarily
rational functions (Theorem~\ref{ext-th}).


Note finally that the metric topology of a non-archimedean field
$K$ with a rank one valuation $v$ is totally disconnected. Rigid
analytic geometry (see e.g.~\cite{B-G-R} for its comprehensive
foundations), developed by Tate, compensates for this defect by
introducing sheaves of analytic functions in a Grothendieck
topology. Another approach is due to Berkovich~\cite{Ber}, who
filled in the gaps between the points of $K^{n}$, producing a
locally compact Hausdorff space (the analytification of
$K^{n}$), which contains the metric space $K^{n}$ as a dense
subspace whenever the ground field $K$ is algebraically closed. His
construction consists in replacing each point $x$ of a given
$K$-variety with the space of all rank one valuations on the
residue field $\kappa(x)$ that extend $v$. Further, the theory of stably
dominated types, developed by Hrushovski--Loeser~\cite{H-L}, deals with
non-archimedean fields with valuation of arbitrary rank and
generalizes that of tame topology for Berkovich spaces. Currently,
various analytic structures over Henselian rank one valued fields
are intensively investigated (see e.g.~\cite{C-Lip-0,C-Lip} for
more information, and~\cite{L-R} for the case of algebraically
closed valued fields).

\section{Quantifier elimination and cell decomposition} We begin
with quantifier elimination due to Pas in the language
$\mathcal{L}$ of Denef--Pas with three sorts: the valued field
$K$-sort, the value group $\Gamma$-sort and the residue field
$\Bbbk$-sort. The language of the $K$-sort is the language of
rings; that of the $\Gamma$-sort is any augmentation of the
language of ordered abelian groups (and $\infty$); finally, that
of the $\Bbbk$-sort is any augmentation of the language of rings.
We denote $K$-sort variables by $x,y,z,\ldots$, $\Bbbk$-sort
variables by $\xi,\zeta,\eta,\ldots$, and $\Gamma$-sort variables
by $k,q,r,\ldots$.

\vspace{1ex}

In the case of non-algebraically closed fields, passing to the
three sorts with additional two maps: the valuation $v$ and the
residue map, is not sufficient. Quantifier elimination due to Pas
holds for Henselian valued fields of equicharacteristic zero in
the above 3-sorted language with additional two maps: the
valuation map $v$ from the field sort to the value group, and a
map $\overline{ac}$ from the field sort to the residue field
(angular component map) which is multiplicative, sends $0$ to $0$
and coincides with the residue map on units of the valuation ring
$R$ of $K$.

\vspace{1ex}

Not all valued fields $K$ have an angular component map, but it exists
if $K$ has a cross section, which happens whenever $K$ is
$\aleph_{1}$-saturated (cf.~\cite[Chap.~II]{Ch}). Moreover, a valued field $K$ has an angular
component map whenever its residue field $\Bbbk$ is
$\aleph_{1}$-saturated (cf.~\cite[Corollary~1.6]{Pa2}). In general, unlike for
$p$-adic fields and their finite extensions, adding an angular
component map does strengthen the family of definable sets. For
both $p$-adic fields (Denef~\cite{De}) and Henselian
equicharacteristic zero valued fields (Pas~\cite{Pa1}), quantifier
elimination was established by means of cell decomposition and a
certain preparation theorem (for polynomials in one variable with
definable coefficients) combined with each other. In the latter
case, however, cells are no longer finite in number, but
parametrized by residue field variables. In the proof of the
closedness theorem, which is a fundamental tool for many
results of this paper, we may use an angular component map
because a given valued field can always be replaced with an
$\aleph_{1}$-saturated elementary extension.

\vspace{1ex}

Finally, let us mention that quantifier elimination based on the
sort $RV := K^{*}/(1+ \mathfrak{m}) \cup \{ 0 \}$ (where $K^{*} :=
K \setminus \{ 0 \}$ and $\mathfrak{m} $ is the maximal ideal of
the valuation ring $R$) was introduced by Besarab~\cite{Bes}. This
new sort binds together the value group and residue field into one
structure. In the paper~\cite[Section~12]{H-K}, quantifier
elimination for Henselian valued fields of equicharacteristic
zero, based on this sort, was derived directly from that by
A.~Robinson~\cite{Ro} for algebraically closed valued fields. Yet another, more
general result, including Henselian valued fields of mixed
characteristic, was achieved by Cluckers--Loeser~\cite{C-L} for
so-called b-minimal structures (from "ball minimal"); in the case
of valued fields, however, countably many sorts $RV_{n} :=
K^{*}/(1+ n \mathfrak{m}) \cup \{ 0 \}$, $n \in \mathbb{N}$, are
needed.

\vspace{1ex}

Below we state the theorem on quantifier elimination due to
Pas~\cite[Theorem~4.1]{Pa1}.

\begin{theorem}\label{QE}
Let $(K,\Gamma,\Bbbk)$ be a structure for the 3-sorted language
$\mathcal{L}$ of Denef--Pas. Assume that the valued field $K$ is
Henselian and of equicharacteristic zero. Then $(K,\Gamma,\Bbbk)$
admits elimination of $K$-quantifiers in the language
$\mathcal{L}$. \hspace*{\fill} $\Box$
\end{theorem}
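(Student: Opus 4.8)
The plan is to follow the classical Denef--Pas strategy, proving quantifier elimination together with a cell decomposition theorem by simultaneous induction on the number of $K$-sort variables. First I would reduce to the essential case. Since negation turns $\forall$ into $\exists$ and the $K$-quantifiers can be peeled off one at a time, it suffices to show that every formula of the form $\exists x\, \psi(x, \tilde{y}, \tilde{\xi}, \tilde{r})$, where $x$ is a single $K$-variable and $\psi$ contains no $K$-quantifiers, is $\mathcal{L}$-equivalent to a formula with no $K$-quantifiers at all. Writing $\psi$ in disjunctive normal form, I may assume it is a conjunction of atomic and negated atomic formulas. The only way $x$ can occur in such atoms is through polynomials $f_i(x) \in K[x]$ whose coefficients are built from the parameters: either as field equalities $f_i(x) = 0$, or, through the two connecting maps, as conditions on $v(f_i(x)) \in \Gamma$ and on $\overline{ac}(f_i(x)) \in \Bbbk$. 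Since $f_i(x) = 0$ is captured by $v(f_i(x)) = \infty$, everything funnels through $v$ and $\overline{ac}$ of the $f_i$.

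The engine of the elimination is the preparation cell decomposition. I would decompose the affine $x$-line into finitely many cells (parametrized, as Pas requires, by residue-field variables), each carrying a center $c = c(\tilde{y})$ definable in the parameters, such that on each cell the quantities $v(f_i(x))$ and $\overline{ac}(f_i(x))$ are given by explicit formulas in the cell data $v(x - c)$ and $\overline{ac}(x - c)$ together with auxiliary-sort parameters. Concretely, on a cell one should be able to write $v(f_i(x)) = v(d_i) + N_i \cdot v(x-c)$ and $\overline{ac}(f_i(x)) = \eta_i \cdot \overline{ac}(x-c)^{N_i}$ for suitable $d_i \in K$, $N_i \in \mathbb{N}$ and $\eta_i \in \Bbbk$. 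Producing this simultaneously for the whole finite family $\{ f_i \}$, by refining to a common decomposition, is the substantive step and is carried out by the induction; the one-variable preparation theorem, in which a polynomial is factored and its factors located and normalized via Hensel's lemma, is its heart.

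Granting the preparation, the elimination is then formal. On each cell I would substitute the prepared expressions, so that $\psi$ becomes a condition involving only $u := v(x-c) \in \Gamma$ and $\zeta := \overline{ac}(x-c) \in \Bbbk$ (besides auxiliary parameters). The statement ``there is an $x$ in the cell'' thus becomes ``there exist $u \in \Gamma$ and $\zeta \in \Bbbk$ subject to the cell constraints'', that is, a disjunction, indexed by the residue-field parametrization of the cells, of formulas quantifying only over the auxiliary sorts. Here I use that $v$ maps $K^{*}$ onto $\Gamma$ and that the angular component map realizes every prescribed pair $(u,\zeta)$ with $\zeta \neq 0$ by some $x$ with $v(x-c) = u$ and $\overline{ac}(x-c) = \zeta$; this is precisely where the hypothesis that $(K,\Gamma,\Bbbk)$ is a genuine $\mathcal{L}$-structure, equipped with $\overline{ac}$, enters. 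The resulting formula has no $K$-quantifier, completing the inductive step.

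The main obstacle is the preparation theorem itself. Controlling $v(f(x))$ and $\overline{ac}(f(x))$ uniformly requires factoring $f$ over a suitable extension of $K$, pinning down the valuations of the roots and of the differences $x - (\text{root})$, and showing that the higher-order terms do not disturb the leading behaviour on a cell --- all of which rest on Henselianity, while equicharacteristic zero is what permits extracting the needed roots and prevents residue-characteristic phenomena from obstructing the normalization. Equally delicate is the bookkeeping: preparing several polynomials at once and threading the induction on the number of variables so that the cell decomposition and the quantifier elimination reinforce each other at every stage.
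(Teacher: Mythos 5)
The first thing to note is that the paper does not prove this statement at all: Theorem~\ref{QE} is quoted verbatim from Pas (\cite[Theorem~4.1]{Pa1}), with the terminal box indicating that the proof is omitted, and the same is true of the preparation cell decomposition (Theorem~\ref{PCD}, quoted from \cite[Theorem~3.2]{Pa1}). So there is no proof in the paper to compare yours against; the only fair comparison is with Pas's original argument, whose architecture your sketch reproduces correctly: peel off one existential $K$-quantifier at a time, put the finitely many polynomials $f_{i}(x)$ occurring in the matrix into prepared form on a cell decomposition of the line, observe that on each cell the truth of the matrix depends on $x$ only through $v(x-c)$ and $\overline{ac}\,(x-c)$, and then replace ``$\exists x$'' by quantification over the auxiliary sorts, using that every pair $(u,\zeta) \in \Gamma \times (\Bbbk \setminus \{0\})$ is realized as $\left( v(x-c), \overline{ac}\,(x-c) \right)$ for some $x$ (surjectivity of $v$, multiplicativity of $\overline{ac}$, and surjectivity of the residue map on units of $R$). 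Your remark that quantifiers over $\Gamma$ and $\Bbbk$ are allowed to survive in the output, since only $K$-quantifiers are being eliminated, is exactly the right observation; note only that the ``disjunction indexed by the residue-field parametrization of the cells'' is really a finite disjunction over cells combined with an existential $\Bbbk$-sort quantifier over the parameters $\xi$, since the fibers of a cell are infinite in number.

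As a self-contained proof, however, your proposal has a genuine gap, and you name it yourself: the entire algebraic content of the theorem sits in the preparation cell decomposition, which you describe but do not prove. Henselianity and equicharacteristic zero are consumed only there (Hensel's lemma to factor each $f_{i}$ and locate its roots, and the control of $v(f_{i}(x))$ and $\overline{ac}\, f_{i}(x)$ by the leading behaviour near the center); without that theorem, the reduction in your final step is formal bookkeeping. In the context of this paper the gap is harmless --- the paper itself takes Theorem~\ref{PCD} as a quoted black box, and granting it, your derivation of $K$-quantifier elimination is essentially the standard and correct one --- but read as a proof of Theorem~\ref{QE} from first principles, your text defers precisely the step where all the hypotheses of the theorem are used, so it is an outline rather than a proof.
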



We immediately obtain the following


{\begin{corollary}\label{QE-cor-1} The 3-sorted structure
$(K,\Gamma,\Bbbk)$ admits full elimination of quantifiers whenever
the theories of the value group $\Gamma$ and the residue field
$\Bbbk$ admit quantifier elimination in the languages of their
sorts. \hspace*{\fill} $\Box$
\end{corollary}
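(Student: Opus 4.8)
The plan is to derive the corollary directly from Theorem~\ref{QE} by reducing away the two auxiliary sorts in turn. First I would fix an arbitrary $\mathcal{L}$-formula $\varphi$ and apply Theorem~\ref{QE} to eliminate all $K$-quantifiers, obtaining an equivalent formula $\psi$ in which the only quantifiers that remain range over the $\Gamma$-sort and the $\Bbbk$-sort. The point to exploit here is the structural feature of $\mathcal{L}$ emphasized in the introduction: the only symbols connecting the sorts are the valuation map $v$ and the angular component map $\overline{ac}$, both going \emph{from} the $K$-sort to the auxiliary sorts, and there is no relation or function taking auxiliary-sort arguments back into $K$. Consequently, after $K$-quantifier elimination the atomic subformulas of $\psi$ separate cleanly: each atomic subformula is either a pure $\Gamma$-sort formula (its $K$-sort terms having been absorbed into free variables of the form $v(\cdots)$) or a pure $\Bbbk$-sort formula (likewise via $\overline{ac}(\cdots)$), with no atomic formula mixing a $\Gamma$-variable and a $\Bbbk$-variable.

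Next I would use the hypothesis that the theory of $\Gamma$ and the theory of $\Bbbk$ each admit quantifier elimination in their own one-sorted languages. Because the auxiliary quantifiers in $\psi$ are typed, a quantifier $\exists k$ over a $\Gamma$-variable only interacts with the $\Gamma$-sort atomic formulas, and a quantifier $\exists \xi$ over a $\Bbbk$-variable only interacts with the $\Bbbk$-sort atomic formulas. I would therefore push each auxiliary quantifier inward past the atomic formulas of the other sort (which it does not bind), apply the single-sort quantifier elimination for $\Gamma$ to the $\Gamma$-blocks and for $\Bbbk$ to the $\Bbbk$-blocks, and thereby remove every remaining quantifier. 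Iterating over all the auxiliary quantifiers of $\psi$ yields an $\mathcal{L}$-formula equivalent to $\varphi$ that is quantifier-free, which is exactly full quantifier elimination for the structure $(K,\Gamma,\Bbbk)$.

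The step I expect to be the genuine crux is the separation claim in the first paragraph: that after eliminating the $K$-quantifiers, no atomic formula couples the $\Gamma$-sort and the $\Bbbk$-sort. This is precisely the place where the poverty of the language $\mathcal{L}$ is used, so I would justify it by inspecting the term structure permitted by $\mathcal{L}$ — terms of auxiliary sort are built from auxiliary-sort variables together with the images $v(t)$ and $\overline{ac}(t)$ of $K$-sort terms $t$, and once the $K$-quantifiers are gone every such $K$-sort term is a free $K$-variable, so $v(t)$ and $\overline{ac}(t)$ become free parameters of the respective auxiliary sorts. The remaining work, interleaving the typed quantifiers and invoking the hypotheses for each sort, is then routine, and the whole argument is immediate enough that the ``We immediately obtain'' in the text is warranted; I would present it compactly rather than belabor the bookkeeping.
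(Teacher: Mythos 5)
Your proposal is correct and is precisely the argument the paper intends: the corollary is stated as immediate from Theorem~\ref{QE}, and the intended reasoning---first eliminate the $K$-quantifiers by Pas's theorem, then eliminate the remaining $\Gamma$- and $\Bbbk$-quantifiers sortwise using the hypothesized one-sorted quantifier eliminations, which is possible because no symbol of $\mathcal{L}$ couples the two auxiliary sorts---is exactly what you spell out. Your identification of the separation of atomic formulas as the crux also matches the paper's own structural remark (in the Introduction) that the only symbols connecting the sorts are the maps $v$ and $\overline{ac}$ from the $K$-sort into the auxiliary sorts.
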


Below we prove another consequence of elimination of
$K$-quantifiers, which will be applied to the study of
definable functions of one variable in Section~5.

\begin{corollary}\label{def-fun}
Let $f:A \to K$ be an $\mathcal{L}$-definable function on a subset
$A$ of $K^{n}$. Then there is a finite partition of $A$ into
$\mathcal{L}$-definable sets $A_{i}$ and irreducible polynomials
$P_{i}(x,y)$, $i=1,\ldots,k$, such that for each $a \in A_{i}$ the
polynomial $P_{i}(a,y)$ in $y$ does not vanish and
$$ P_{i}(a,f(a))=0 \ \ \ \text{for all} \ \  a \in A_{i}, \ \
   i=1,\ldots,k. $$
\end{corollary}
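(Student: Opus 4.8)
The plan is to apply elimination of $K$-quantifiers to the graph of $f$ and then exploit that each fiber of the graph over $A$ consists of a single point. First I would regard the graph $\Gamma_{f} = \{(a,y) \in A \times K : y = f(a)\}$ as an $\mathcal{L}$-definable subset of $K^{n+1}$, with $K$-variables $x = (x_{1},\dots,x_{n})$ and $y$. By Theorem~\ref{QE} it is defined by a formula $\phi(x,y)$ containing no $K$-quantifiers. Since the only symbols joining the $K$-sort to the auxiliary sorts are $v$ and $\overline{ac}$, every occurrence of $y$ in $\phi$ is through polynomials $g_{1}(x,y),\dots,g_{N}(x,y) \in K[x,y]$, appearing either in $K$-sort (in)equalities $g_{j}(x,y) = 0$, or inside the $\Gamma$-sort terms $v(g_{j}(x,y))$, or inside the $\Bbbk$-sort terms $\overline{ac}(g_{j}(x,y))$; the remaining quantifiers range over $\Gamma$ and $\Bbbk$ only and do not involve $y$.

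The heart of the argument, and the step I expect to be the main obstacle, is the following rigidity claim: for every $a \in A$, the value $f(a)$ is a root of some $g_{j}(a,y)$ with $g_{j}(a,\cdot) \not\equiv 0$. I would prove it by contradiction. Suppose that for a given $a$ one has $g_{j}(a,f(a)) \neq 0$ whenever $g_{j}(a,\cdot) \not\equiv 0$. At the point $y_{0} = f(a)$ each such $g_{j}(a,\cdot)$ is continuous with $g_{j}(a,y_{0}) \neq 0$; hence for $y$ in a sufficiently small ball $B = \{y : v(y - y_{0}) > \delta\}$ one has $v(g_{j}(a,y)) = v(g_{j}(a,y_{0}))$ and $\overline{ac}(g_{j}(a,y)) = \overline{ac}(g_{j}(a,y_{0}))$, because $v$ and $\overline{ac}$ are locally constant on $K^{*}$ (if $v(z - z_{0}) > v(z_{0})$ then $z/z_{0} \in 1 + \mathfrak{m}$, so $v(z) = v(z_{0})$ and $\overline{ac}(z) = \overline{ac}(z_{0})$). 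Those $g_{j}$ with $g_{j}(a,\cdot) \equiv 0$ contribute conditions whose truth value is constant in $y$ anyway. Consequently every atomic subformula of $\phi(a,\cdot)$ has locally constant truth value near $y_{0}$, and therefore so does the Boolean combination $\phi(a,\cdot)$; since $\phi(a,y_{0})$ holds, $\phi(a,y)$ holds for all $y \in B$. As $B$ is infinite, the fiber $\{y : \phi(a,y)\}$ would be infinite, contradicting $\{y : \phi(a,y)\} = \{f(a)\}$. This proves the claim.

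It then remains to organize the data into the required partition. I would discard the $g_{j}$ of degree $0$ in $y$ and factor each remaining $g_{j}$ in $K[x,y]$ into irreducible factors $P_{j,l}(x,y)$. For $a \in A$, the claim gives an index $j$ with $g_{j}(a,\cdot) \not\equiv 0$ and $g_{j}(a,f(a)) = 0$; then some irreducible factor satisfies $P_{j,l}(a,f(a)) = 0$, and $P_{j,l}(a,\cdot) \not\equiv 0$ (otherwise $g_{j}(a,\cdot) \equiv 0$). Setting $A_{j,l} := \{a \in A : P_{j,l}(a,\cdot) \not\equiv 0 \text{ and } P_{j,l}(a,f(a)) = 0\}$ for each pair $(j,l)$ yields $\mathcal{L}$-definable sets whose union is $A$; refining them into a disjoint family (e.g.\ by keeping, for each $a$, the lexicographically least eligible pair $(j,l)$) and relabelling the nonempty pieces as $A_{1},\dots,A_{k}$ with associated irreducible polynomials $P_{1},\dots,P_{k}$ gives exactly the asserted partition, since on $A_{i}$ the polynomial $P_{i}(a,y)$ is a nonzero polynomial in $y$ vanishing at $y = f(a)$.
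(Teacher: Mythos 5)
Your proposal is correct and follows essentially the same route as the paper: apply Pas quantifier elimination to the graph of $f$, use the fact that each fiber is a singleton to force a vanishing condition $g_{j}(a,f(a))=0$ with $g_{j}(a,\cdot)\not\equiv 0$, and then pass to irreducible factors and a definable partition of $A$. The only difference is presentational: the paper phrases the rigidity step as the angular-component condition necessarily containing $\overline{ac}\,g_{j}(x,y)=0$, while you spell out the underlying reason — local constancy of $v$ and $\overline{ac}$ near nonvanishing points, so that otherwise the fiber would contain a ball — which is exactly the implicit justification in the paper's terser argument.
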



\begin{proof} By elimination of $K$-quantifiers,
the graph of $f$ is a finite union of sets $B_{i}$,
$i=1,\ldots,k$, defined by conditions of the form
$$ (v(f_{1}(x,y)),\ldots,v(f_{r}(x,y))) \in P, \ \  (\overline{ac}\, g_{1}(x,y),\ldots,\overline{ac}\, g_{s}(x,y))
   \in Q, $$
where $f_{i},g_{j} \in K[x,y]$ are polynomials, and $P$ and $Q$
are $\mathcal{L}$-definable subsets of $\Gamma^{r}$ and
$\Bbbk^{s}$, respectively. Each set $B_{i}$ is the graph of the
restriction of $f$ to an $\mathcal{L}$-definable subset $A_{i}$.
Since, for each point $a \in A_{i}$, the fibre of $B_{i}$ over $a$
consists of one point, the above condition imposed on angular
components includes one of the form $\overline{ac}\, g_{j}(x,y)=0$
or, equivalently, $g_{j}(x,y)=0$, for some $j=1,\ldots,s$, which
may depend on $a$, where the polynomial $g_{j}(a,y)$ in $y$ does
not vanish. This means that the set
$$ \{ (\overline{ac}\, g_{1}(x,y),\ldots,\overline{ac}\, g_{s}(x,y)):
   (x,y) \in B_{i} \} $$
is contained in the union of hyperplanes $\bigcup_{j=1}^{s} \{
\xi_{j}=0 \}$ and, furthermore, that for each point $a \in
A_{i}$ there is an index $j=1,\ldots,s$ such that the polynomial
$g_{j}(a,y)$ in $y$ does not vanish and $g_{j}(a,f(a))=0$.
Clearly, for any $j=1,\ldots,s$, this property of points $a \in
A_{i}$ is $\mathcal{L}$-definable. Therefore we can partition the
set $A_{i}$ into subsets each of which fulfils the condition
required in the conclusion with some irreducible factors of the
polynomial $g_{j}(x,y)$.
\end{proof}


Recall now some notation concerning cell decomposition. Consider
an $\mathcal{L}$-definable subset $D$ of $K^{n} \times \Bbbk^{m}$,
three $\mathcal{L}$-definable functions
$$ a(x,\xi),b(x,\xi),c(x,\xi): D \to K $$
and a positive integer $\nu$. For each $\xi \in \Bbbk^{m}$ set
$$ C(\xi) := \{ (x,y) \in K^{n}_{x} \times K_{y}: \ (x,\xi) \in D,
$$
$$ v(a(x,\xi)) \lhd_{1} v((y-c(x,\xi))^{\nu}) \lhd_{2} v(b(x,\xi)), \
   \overline{ac} (y-c(x,\xi)) = \xi_{1} \}, $$
where $\lhd_{1},\lhd_{2}$ stand for $<, \leq$ or no condition in
any occurrence. If the sets $C(\xi)$, $\xi \in \Bbbk^{m}$, are
pairwise disjoint, the union
$$ C := \bigcup_{\xi \in \Bbbk^{m}} C(\xi) $$
is called a cell in $K^{n} \times K$ with parameters $\xi$ and
center $c(x,\xi)$; $C(\xi)$ is called a fiber of the cell $C$.


\begin{theorem}\label{PCD} (Preparation Cell Decomposition, \cite[Theorem~3.2]{Pa1})
Let
$$ f_{1}(x,y),\ldots,f_{r}(x,y) $$
be polynomials in one variable $y$ with coefficients being
$\mathcal{L}$-definable functions on $K^{n}_{x}$. Then $K^{n}
\times K$ admits a finite partition into cells such that on each
cell $C$ with parameters $\xi$ and center $c(x,\xi)$ and for all
$i=1,\ldots,r$ we have:
$$ v(f_{i}(x,y)) = v \left( \tilde{f}_{i}(x,\xi)(y-c(x,\xi))^{\nu_{i}}
   \right), $$
$$ \overline{ac}\, f_{i}(x,y) = \xi_{\mu(i)}, $$
where $\tilde{f}_{i}(x,\xi)$ are $\mathcal{L}$-definable
functions, $\nu_{i} \in \mathbb{N}$ for all $i=1,\ldots,r$, and
the map $\mu: \{ 1,\ldots,r \} \to \{ 1,\ldots,m \}$ does not
depend on $x,y,\xi$.  \hspace*{\fill} $\Box$
\end{theorem}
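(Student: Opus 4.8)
The plan is to prove this by induction on the total $y$-degree of the family $f_1,\dots,f_r$, the analytic engine being Hensel's lemma in the equicharacteristic zero Henselian field $K$. The guiding principle is that the valuation and the angular component of a polynomial $f_i(x,y)$ are governed by the position of $y$ relative to the roots of $f_i(x,\cdot)$, and that a cell is precisely a region of $K^n \times K$ on which this relative position is constant; the whole task is to organize these regions into an $\mathcal{L}$-definable partition on which the asserted equalities hold simultaneously for all $i$.

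First I would reduce to monic (or suitably normalized) polynomials. Partitioning $K^n_x$ into $\mathcal{L}$-definable pieces according to which coefficient of each $f_i$ has least valuation, and discarding the pieces where a coefficient vanishes identically, one factors out that dominant coefficient; its valuation and angular component are absorbed into the eventual $\tilde f_i(x,\xi)$ and into the residue-field bookkeeping, so it suffices to prepare the normalized factors. This also lets me assume the relevant leading data are uniform across each piece.

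Next comes the heart of the matter. Over a Henselian field of equicharacteristic zero each $f_i(x,\cdot)$ splits into linear factors over the algebraic closure, so that $v(f_i(x,y)) = v(a_i(x)) + \sum_k v(y - \alpha_{i,k}(x))$, where the $\alpha_{i,k}(x)$ are the roots. For a suitable center $c$ and any $y$ in a given region one has, for each root, either $v(y - \alpha_{i,k}) = v(y - c)$ (when $y$ is farther from $c$ than $\alpha_{i,k}$ is) or $v(y - \alpha_{i,k}) = v(\alpha_{i,k} - c)$, a quantity independent of $y$. Taking $c$ near a nearest root and grouping the factors accordingly collapses the sum to $v(\tilde f_i(x,\xi)) + \nu_i\, v(y - c)$, which is exactly the asserted form; simultaneously $\overline{ac}\, f_i(x,y)$ becomes a fixed residue-field coordinate, recorded as $\xi_{\mu(i)}$, with $\mu$ determined once the grouping is fixed. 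The partition of $K_y$ into such regions is cut out by comparing the valuative distance $v(y - c)$ with the mutual distances $v(\alpha_{i,k} - \alpha_{i',k'})$ of all the roots.

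The main obstacle is that the roots $\alpha_{i,k}(x)$ are only algebraic, not $\mathcal{L}$-definable, over the given coefficient functions, so neither the centers $c$ nor the data $(\tilde f_i, \nu_i, \mu)$ can be read off directly. This is resolved by using Hensel's lemma to produce definable approximate roots and, crucially, by allowing the cells to be parametrized by residue-field variables $\xi \in \Bbbk^m$: the combinatorics of the root configuration — which root is nearest, the pattern of mutual distances, and the relevant angular components — becomes $\mathcal{L}$-definable after introducing these auxiliary parameters, and the condition $\overline{ac}(y - c) = \xi_1$ pins down the residue direction of each fiber. Carrying the induction through for all the $f_i$ at once forces one to pass to common refinements at each stage while checking that the single assignment $\mu$ and the fiber structure stay uniform in $(x,y,\xi)$; verifying this uniformity, together with the $\mathcal{L}$-definability of the entire partition, is the delicate bookkeeping on which the argument turns.
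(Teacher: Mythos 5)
This statement is not proved in the paper at all: it is quoted verbatim from Pas (\cite[Theorem~3.2]{Pa1}) and used as a black box, so there is no internal argument to compare yours against. Judged on its own merits, your sketch correctly identifies the Cohen--Denef--Pas strategy (factor over the algebraic closure, compare $v(y-c)$ with the distances from $c$ to the roots, absorb the root configuration into residue-field parameters), but it has genuine gaps exactly where that strategy is hard to execute.

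The central one is the case you silently exclude: your dichotomy ``either $v(y-\alpha_{i,k})=v(y-c)$ or $v(y-\alpha_{i,k})=v(\alpha_{i,k}-c)$'' fails when $v(y-c)=v(\alpha_{i,k}-c)$ and the angular components cancel; then $v(y-\alpha_{i,k})$ is strictly larger than both and genuinely depends on $y$, so the sum over roots does \emph{not} collapse to $v(\tilde f_i(x,\xi))+\nu_i\,v(y-c)$ on such a region. Handling this case is the actual content of the theorem: one must recenter at a better (definable!) approximation of the offending root, and it is here that Hensel's lemma and the induction do real work. Your proposal invokes ``definable approximate roots via Hensel's lemma'' and an ``induction on total $y$-degree,'' but never constructs such a center, never exhibits a step in which the degree drops, and never specifies the residue conditions cutting the cells finely enough to make $\overline{ac}\,f_i(x,y)$ equal to a single coordinate $\xi_{\mu(i)}$ (the angular component of a product of factors involves sums of residues that can vanish, which is again the cancellation problem). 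As written, the argument assumes in each of these places precisely what the theorem asserts, namely that the root configuration can be replaced by $\mathcal{L}$-definable data; filling those holes is essentially reproducing Pas's paper.
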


\begin{remark}
The functions $f_{1}(x,y),\ldots,f_{r}(x,y)$ are said to be
prepared with respect to the variable $y$.
\end{remark}

Every divisible ordered group $\Gamma$ admits quantifier
elimination in the language $(<,+,-,0)$ of ordered groups.
Therefore it is not difficult to deduce from Theorems~\ref{QE}
and~\ref{PCD} the following


\begin{corollary}\label{CD} (Cell decomposition) If, in addition,
the value group $\Gamma$ is divisible, then every
$\mathcal{L}$-definable subset $B$ of $K^{n} \times K$ is a finite
disjoint union of cells.  \hspace*{\fill} $\Box$
\end{corollary}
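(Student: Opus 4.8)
The plan is to deduce Corollary~\ref{CD} by combining the quantifier elimination of Theorem~\ref{QE} with the preparation cell decomposition of Theorem~\ref{PCD}, exploiting the additional hypothesis that $\Gamma$ is divisible. First I would observe that the divisibility of $\Gamma$ is exactly what is needed to make the full quantifier elimination of Corollary~\ref{QE-cor-1} applicable: a divisible ordered abelian group admits quantifier elimination in the language $(<,+,-,0)$, as noted just before the statement, and any reasonable residue-field theory can be handled on its own sort. Thus every $\mathcal{L}$-definable subset $B$ of $K^{n}\times K$ is described, after elimination of all quantifiers, by a Boolean combination of atomic conditions on the valuations $v(f_{i}(x,y))$ and the angular components $\overline{ac}\,g_{j}(x,y)$ of finitely many polynomials $f_{i},g_{j}\in K[x,y]$, viewed as polynomials in the single variable $y$ with $\mathcal{L}$-definable coefficients on $K^{n}_{x}$.

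The key step is then to apply Theorem~\ref{PCD} to this finite list of polynomials, obtaining a finite partition of $K^{n}\times K$ into cells on each of which every $v(f_{i}(x,y))$ and $\overline{ac}\,f_{i}(x,y)$ is expressed in the prepared form $v(\tilde f_{i}(x,\xi)(y-c(x,\xi))^{\nu_{i}})$ and $\xi_{\mu(i)}$ respectively. On such a prepared cell the atomic conditions defining $B$ no longer involve the variable $y$ in an essential way: each valuative condition becomes a condition on $v(\tilde f_{i}(x,\xi))+\nu_{i}\,v(y-c(x,\xi))$, and each angular-component condition becomes a condition on the coordinates $\xi_{\mu(i)}$. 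Intersecting $B$ with a cell of the partition therefore cuts out a condition purely on $x$, on $\xi$, and on the single quantity $v(y-c(x,\xi))$ (together with $\overline{ac}(y-c(x,\xi))$), which is precisely the shape of a cell as defined above.

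I would then argue that each resulting piece is itself a finite disjoint union of cells. Here the divisibility of $\Gamma$ reappears: the resulting constraints on $v(y-c(x,\xi))$ are, after clearing the multipliers $\nu_{i}$, finite Boolean combinations of inequalities of the form $v(a(x,\xi))\lhd v((y-c(x,\xi))^{\nu})\lhd v(b(x,\xi))$, and divisibility guarantees that we may solve for the exponent $\nu$ and split into finitely many intervals with the prescribed $\lhd_{1},\lhd_{2}$ bounds. Any residue-field condition is absorbed into the definable base $D\subseteq K^{n}\times\Bbbk^{m}$ and the prescription $\overline{ac}(y-c(x,\xi))=\xi_{1}$. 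Refining the partition so that the fibers $C(\xi)$ are pairwise disjoint and discarding empty pieces yields the desired finite disjoint union of cells.

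The main obstacle I anticipate is the bookkeeping in the last step: translating an arbitrary Boolean combination of the prepared valuative inequalities into a genuine disjoint union of cells of the admissible form, and in particular handling negations and the interplay of the several exponents $\nu_{i}$ simultaneously. This is where divisibility of $\Gamma$ does the real work, allowing one to normalize all the conditions to a common power of $(y-c(x,\xi))$ and to decompose the resulting set of admissible values of $v(y-c(x,\xi))$ into finitely many intervals; making the cells pairwise disjoint across the residue parameters $\xi$ is then a routine refinement. The quantifier elimination and the preparation theorem are invoked as black boxes, so no genuinely new difficulty arises beyond this combinatorial normalization.
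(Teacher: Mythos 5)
Your proposal is correct and takes exactly the route the paper intends: the paper states this corollary without a written proof, remarking only that a divisible ordered group admits quantifier elimination in $(<,+,-,0)$ and that the result is then "not difficult to deduce" from Theorems~\ref{QE} and~\ref{PCD}. Your fleshing-out---eliminating $K$-quantifiers, preparing the resulting polynomials via Theorem~\ref{PCD}, and using the absence of congruence conditions (the real content of divisibility, in contrast with Corollary~\ref{CD1}) to reduce the constraints on $v(y-c(x,\xi))$ to finitely many interval conditions of cell shape---is precisely that intended deduction.
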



Every archimedean ordered group $\Gamma$ (which of course may be regarded as a
subgroup of the additive group $\mathbb{R}$ of real numbers)
admits quantifier elimination in the Presburger language
$(<,+,-,0,1)$ with binary relation symbols $\equiv_{n}$ for
congruences modulo $n>1$, $n \in \mathbb{N}$, where $1$ denotes
the minimal positive element of $\Gamma$ if it exists or $1=0$
otherwise. Under the circumstances one can deduce in a similar manner the following


\begin{corollary}\label{CD1}
If, in addition, the valuation $v$ is of rank $1$, then every
$\mathcal{L}$-definable subset $B$ of $K^{n} \times K$ is a finite
disjoint union of sets each of which is a subset
$$ F := \bigcup_{\xi \in \Bbbk^{m}} F(\xi) $$
of a cell
$$ C := \bigcup_{\xi \in \Bbbk^{m}} C(\xi) $$
determined by finitely many congruences:
$$ F(\xi) = \left\{ (x,y) \in C(\xi): \, v\left( f_{i}(x,\xi) (y -
   c(x,\xi))^{k_{i}} \right) \equiv_{M} 0, \ i=1,\ldots,s \right\}, $$
where $f_{i}$ are $\mathcal{L}$-definable functions, $k_{i} \in
\mathbb{N}$ for $i=1,\ldots,s$, and $M \in \mathbb{N}$, $M
>1$.   \hspace*{\fill} $\Box$
\end{corollary}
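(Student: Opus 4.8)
The plan is to follow the derivation of Corollary~\ref{CD} almost verbatim, the sole new feature being that quantifier elimination for the archimedean group $\Gamma$ now takes place in the Presburger language, so the congruence relations $\equiv_n$ enter and must be tracked. First I would apply elimination of $K$-quantifiers (Theorem~\ref{QE}), which presents $B$ as a finite Boolean combination of conditions of the form $(v(f_1(x,y)),\ldots,v(f_r(x,y))) \in P$ and $(\overline{ac}\, g_1(x,y),\ldots,\overline{ac}\, g_s(x,y)) \in Q$, with $f_i, g_j \in K[x,y]$ and with $P \subseteq \Gamma^r$, $Q \subseteq \Bbbk^s$ definable in the respective auxiliary sorts. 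To the polynomials $f_i, g_j$ I would then apply the Preparation Cell Decomposition (Theorem~\ref{PCD}), obtaining a finite partition of $K^n \times K$ into cells on each of which $v(f_i(x,y)) = v(\tilde{f}_i(x,\xi)(y-c(x,\xi))^{\nu_i})$ and $\overline{ac}\, f_i(x,y) = \xi_{\mu(i)}$ (and similarly for the $g_j$), with a common center $c$ and common parameters $\xi$.

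Fix one such cell $C = \bigcup_\xi C(\xi)$. The residue-field conditions defining $B$ now become conditions on the parameter tuple $\xi$ alone, since each $\overline{ac}\, g_j$ has been identified with a coordinate $\xi_{\mu(j)}$; these simply restrict the set of admissible $\xi$ and may be absorbed into the base $D$ of the cell. The value-group conditions become conditions in the $\Gamma$-sort on the quantities $v(\tilde{f}_i(x,\xi)) + \nu_i v(y-c(x,\xi))$. Applying quantifier elimination for $\Gamma$ in the Presburger language $(<,+,-,0,1,\{\equiv_n\})$, each such condition is equivalent, on $C$, to a finite Boolean combination of linear inequalities and congruences in the single group element $v(y-c(x,\xi))$, with coefficients built from the $v(\tilde{f}_i(x,\xi))$.

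I would then refine the partition according to which disjunct of this Boolean combination holds and according to the ordering of the finitely many bounds on $v(y-c)$ that occur. After this refinement the inequality part of each piece is exactly a system $v(a(x,\xi)) \lhd_1 v((y-c)^\nu) \lhd_2 v(b(x,\xi))$, so each piece is again a cell in the sense of Pas; the leftover congruences, each of the form $\alpha\, v(y-c) + \sum_i c_i\, v(\tilde{f}_i(x,\xi)) \equiv_n 0$, are precisely the defining congruences of a subset $F \subseteq C$. Rewriting $\alpha\, v(y-c) + \sum_i c_i\, v(\tilde{f}_i(x,\xi)) = v\!\left(f(x,\xi)(y-c)^{k}\right)$ for suitable $\mathcal{L}$-definable $f$ and exponent $k$, and replacing the several moduli $n$ by their least common multiple $M$, puts each piece in the required form.

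The step I expect to be the main obstacle is the bookkeeping in this last paragraph: one must verify that after Presburger elimination the inequality constraints genuinely assemble into the rigid cell format (single center $c$, single exponent $\nu$, and the angular-component normalization $\overline{ac}(y-c)=\xi_1$), while the residual congruences can be rewritten with the uniform modulus $M$ in the stated shape $v(f_i(x,\xi)(y-c)^{k_i}) \equiv_M 0$. Everything else is a routine transcription of the divisible case treated in Corollary~\ref{CD}.
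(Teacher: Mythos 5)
Your proposal is correct and takes essentially the same route as the paper: Corollary~\ref{CD1} is there deduced, just like Corollary~\ref{CD}, from Theorems~\ref{QE} and~\ref{PCD}, the only new ingredient being quantifier elimination for the archimedean (rank one) value group in the Presburger language with the congruence predicates $\equiv_{n}$, which is precisely your argument. The bookkeeping you flag (absorbing residue conditions into the base, rescaling exponents, rewriting each congruence as $v\left(f(x,\xi)(y-c)^{k}\right)\equiv_{M}0$ via products of powers of the prepared coefficients and a common modulus $M$) is exactly the part the paper leaves implicit, and your treatment of it is consistent with the intended derivation.
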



\begin{remark}
Corollary~\ref{CD1} will be applied to establish the closedness
theorem (Theorem~\ref{clo-th}) in Section~7.
\end{remark}


\section{Closedness theorem} In this paper we are interested mainly in geometry
over a Henselian rank one valued field of equicharacteristic zero.
From now on we shall assume (unless otherwise stated) that the
ground field $K$ is such a field. Below we state one of the basic
theorems, on which many other results of our paper rely.


\begin{theorem}\label{clo-th} (Closedness theorem)
Let $D$ be an $\mathcal{L}$-definable subset of $K^{n}$. Then the
canonical projection
$$ \pi: D \times R^{m} \longrightarrow D  $$
is definably closed in the $K$-topology, i.e.\ if $B \subset D
\times R^{m}$ is an $\mathcal{L}$-definable closed subset, so is
its image $\pi(B) \subset D$.
\end{theorem}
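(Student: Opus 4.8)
The plan is to prove the statement

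$$\pi: D \times R^m \longrightarrow D$$

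is definably closed. First I would reduce to the case $m=1$: by viewing $R^m = R \times R^{m-1}$ and factoring the projection as a composition, it suffices to treat a single closed coordinate in the valuation ring and then iterate. This is legitimate because the image of a definable closed set under one projection should again be definable (by quantifier elimination) and closed, and closedness of a composition follows from closedness of each factor. After this reduction the problem becomes: given an $\mathcal{L}$-definable closed subset $B \subset D \times R$ with $D \subset K^n$, show $\pi(B)$ is closed in $D$.

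Next I would set up the topological criterion for closedness appropriate to the $K$-topology. Since the value group $\Gamma$ has rank one, the topology is metric (given by the absolute value), so closedness of $\pi(B)$ amounts to the following: whenever a point $a \in D$ lies in the closure of $\pi(B)$, it already belongs to $\pi(B)$. Concretely, suppose $a$ is a limit of points $a_j \in \pi(B)$; for each $a_j$ choose $y_j \in R$ with $(a_j, y_j) \in B$. Because $R$ is the valuation ring (the "unit ball"), the fibre coordinate $y_j$ stays bounded, and the heart of the matter is to produce a single limit point $y \in R$ with $(a, y) \in B$. Here I expect the role of Corollary~\ref{CD1} to be decisive: it lets me decompose $B$ into finitely many pieces $F$ sitting inside cells $C$ with a fixed center $c(x,\xi)$, so that on each piece the behaviour of the fibre variable $y$ is controlled by the valuation data $v(a(x,\xi)) \lhd_1 v((y - c(x,\xi))^\nu) \lhd_2 v(b(x,\xi))$ together with the angular-component condition and the finitely many congruences. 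On a single such piece the question reduces to analyzing how the interval-type constraints on $y$ vary as $x \to a$, which is governed by the $\mathcal{L}$-definable functions $a(x,\xi), b(x,\xi), c(x,\xi)$ and their limits.

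The main obstacle, and the crux of the entire argument, is the passage to the limit in the fibre: one must show that as $a_j \to a$ the admissible values $y_j$ accumulate at some $y$ that still satisfies all the cell and congruence conditions defining $B$ at the point $a$. The difficulty is genuine because, unlike the locally compact case, the valuation ring $R$ need not be compact, so one cannot simply extract a convergent subsequence from the $y_j$. This is precisely why the paper postpones the general proof to Section~7 and introduces the auxiliary tools of fiber shrinking (Proposition~\ref{FS}) and existence of the limit for definable functions of one variable (Proposition~\ref{limit-th}): the former replaces compactness by producing, after shrinking, a definable curve or arc along which one approaches $a$ inside $\pi(B)$, and the latter guarantees that along such an arc the fibre coordinate has a genuine limit in $R$. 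I would therefore organize the final step as follows: use fiber shrinking to reduce the limit question to a one-parameter definable family, apply the limit theorem to extract $\lim y_j = y \in R$, and then verify — using that $B$ is closed and that the defining valuation, angular-component, and congruence conditions pass to the limit along the arc — that $(a,y) \in B$, whence $a \in \pi(B)$ as required.
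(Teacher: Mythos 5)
Your skeleton of reductions matches the paper's proof: reduce to $m=1$, use fiber shrinking (Proposition~\ref{FS}) to cut the base down to a one-parameter family, and decompose $B$ via Corollary~\ref{CD1} into pieces of cells cut out by congruences. But the step you yourself call the crux --- ``apply the limit theorem to extract $\lim y_j = y \in R$'' --- is a genuine gap, and it is not how the paper closes the argument. Proposition~\ref{limit-th} applies to an $\mathcal{L}$-definable \emph{function} of one variable; over your one-parameter family the fibre coordinate is not a function but a set (the fibres of the reduced set are definable subsets of $R$), and the language provides no definable selection from such fibres: picking a point of a cell fibre amounts to picking an element with prescribed valuation and angular component, and neither $v$ nor $\overline{ac}$ has a definable section. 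Even granting some set-theoretic choice of $y_j$, nothing makes it converge --- that is exactly the failure of compactness of $R$ that you correctly identified in your second paragraph, so the final step of your outline assumes precisely what had to be proved. (Your closing phrase that the cell conditions ``pass to the limit along the arc'' also cannot be taken literally: conditions such as $\overline{ac}(y-c(x))=\xi_{1}$ need not hold at a limit point; the paper only ever uses that $B$ is closed, once an accumulation point of $B$ over $a$ has been produced.)

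What the paper actually does at the crux is structurally different: it never makes the fibre coordinate converge. After reducing to a single cell fibre $F(\xi')$ (using that $\mathcal{L}$-definable subsets of $\Gamma \times \Bbbk^{m}$ are finite unions of products of definable sets in the two sorts), it applies Proposition~\ref{limit-th} to the \emph{coefficient functions} $a(x,\xi'), b(x,\xi'), c(x,\xi'), f_{i}(x,\xi')$ of the cell --- these are honest definable functions of one variable --- and crucially uses the second half of that proposition: the sets $\{(v(x),v(f_{i}(x)))\}$ lie on affine lines of rational slope. In the easy case ($\lhd_{1}$ occurs and $a(0)=0$) the cell itself is a fiber shrinking at $(0,0)$ and the limit point is $(0,0)$. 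In the hard case, the paper takes one point $(u,w) \in F(\xi')$ and keeps $w$ \emph{fixed}: the congruence description of $v(E)$ together with the rational slopes shows that all the cell and congruence conditions are invariant under shifting $v(x)$ by suitable multiples of $qMN$, so there is a sequence $u_{r} \to 0$ in the base with $(u_{r},w) \in F(\xi')$ for every $r$. Hence $(0,w)$ lies in the closure of $B$, and closedness of $B$ finishes the proof. The resolution of non-compactness is thus ``constant fibre coordinate, moving base point,'' not ``converging fibre coordinate''; this idea is absent from your outline, and without it the proof does not go through.
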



Observe that the $K$-topology is $\mathcal{L}$-definable whence
the above theorem is a first order property. Therefore it can be
proven using elementary extensions and thus one may assume that an
angular component map exists. We shall provide two different
proofs for this theorem. The first, given in Section~4, is valid
whenever the value group $\Gamma$ is discrete, and is based on a
procedure of algorithmic character. The other, given in Section~7,
is valid for the general case, and makes use of
Corollary~\ref{CD1} and fiber shrinking from Section~6 which, in
turn, relies on some results on $\mathcal{L}$-definable functions
of one variable from Section~5. When the ground field $K$ is
locally compact, the closedness theorem holds by a routine
topological argument. We immediately obtain five corollaries
stated below.


\begin{corollary}\label{clo-th-cor-1}
Let $D$ be an $\mathcal{L}$-definable subset of $K^{n}$ and
$\,\mathbb{P}^{m}(K)$ stand for the projective space of dimension
$m$ over $K$. Then the canonical projection
$$ \pi: D \times \mathbb{P}^{m}(K) \longrightarrow D $$
is definably closed. \hspace*{\fill} $\Box$
\end{corollary}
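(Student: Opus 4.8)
The plan is to deduce Corollary~\ref{clo-th-cor-1} from the closedness theorem (Theorem~\ref{clo-th}) by covering the projective space with affine charts and reducing the projection $D \times \mathbb{P}^{m}(K) \to D$ to finitely many projections of the form handled by Theorem~\ref{clo-th}. The first step is to recall that $\mathbb{P}^{m}(K)$ is covered by the $m+1$ standard affine charts $U_{i} = \{ [z_{0}:\cdots:z_{m}] : z_{i} \neq 0 \}$, each isomorphic as a $K$-definable set to $K^{m}$. The obstacle is that an arbitrary point of $\mathbb{P}^{m}(K)$ need not lie in any \emph{single} chart with all affine coordinates in the valuation ring $R$; so a naive identification of $U_{i}$ with $R^{m}$ fails. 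The key idea to overcome this is that, by the ultrametric inequality, every point of $\mathbb{P}^{m}(K)$ admits a representative $[z_{0}:\cdots:z_{m}]$ with $\max_{j} v(z_{j}) = 0$, i.e.\ with all coordinates in $R$ and at least one a unit; equivalently, $\mathbb{P}^{m}(K)$ is the image of the ``unit polydisc boundary'' in $R^{m+1}$.

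\medskip

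Concretely, I would introduce the definable subsets
$$ V_{i} := \left\{ [z_{0}:\cdots:z_{m}] \in \mathbb{P}^{m}(K) : v(z_{i}) \leq v(z_{j}) \ \text{for all} \ j=0,\ldots,m \right\}, \quad i=0,\ldots,m, $$
which cover $\mathbb{P}^{m}(K)$ and are each $K$-definably homeomorphic to $R^{m}$: on $V_{i}$ one normalizes by dividing through by $z_{i}$, so that $z_{i}/z_{i}=1$ and every ratio $z_{j}/z_{i}$ lies in $R$. This normalization is a definable homeomorphism $\varphi_{i}: V_{i} \xrightarrow{\sim} R^{m}$ in the $K$-topology, and each $V_{i}$ is closed in $\mathbb{P}^{m}(K)$ (being defined by non-strict valuation inequalities). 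The next step is to observe that the projection $\pi$ restricted to $D \times V_{i}$ is, via $\mathrm{id}_{D} \times \varphi_{i}$, exactly the canonical projection $D \times R^{m} \to D$ to which Theorem~\ref{clo-th} applies.

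\medskip

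With these pieces in place, the argument concludes as follows. Let $B \subset D \times \mathbb{P}^{m}(K)$ be an $\mathcal{L}$-definable closed subset. Since the $V_{i}$ cover $\mathbb{P}^{m}(K)$ and are closed, each $B \cap (D \times V_{i})$ is an $\mathcal{L}$-definable closed subset of $D \times V_{i} \cong D \times R^{m}$, and is definable because the charts and the normalization maps $\varphi_{i}$ are $\mathcal{L}$-definable. Applying Theorem~\ref{clo-th} to each index $i$ shows that $\pi\left( B \cap (D \times V_{i}) \right)$ is closed in $D$. Finally,
$$ \pi(B) = \bigcup_{i=0}^{m} \pi\left( B \cap (D \times V_{i}) \right) $$
is a finite union of closed definable subsets of $D$, hence closed and definable. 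This establishes that $\pi: D \times \mathbb{P}^{m}(K) \to D$ is definably closed.

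\medskip

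The main point to be careful about is verifying that the chart maps $\varphi_{i}$ are genuinely $K$-definable homeomorphisms carrying the closed set and its topology correctly, so that ``closed in $D \times V_{i}$'' transports to ``closed in $D \times R^{m}$'' and back; this is where the non-strict inequalities defining $V_{i}$ (rather than the open charts $U_{i}$) are essential, guaranteeing both that the $V_{i}$ are closed and that the finite union of the images remains closed. Once the reduction to $R^{m}$ is set up cleanly, the corollary follows immediately from Theorem~\ref{clo-th}, which is why it can be stated with a one-line proof.
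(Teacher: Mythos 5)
Your proof is correct and is essentially the argument the paper treats as immediate: the paper itself notes (in Section~7) that $\mathbb{P}^{1}(K)$ is a union of two open and closed charts biregular to $R$, and your covering of $\mathbb{P}^{m}(K)$ by the clopen sets $V_{i} = \{v(z_{i}) \leq v(z_{j}) \ \forall j\} \cong R^{m}$ is exactly the higher-dimensional version of that reduction to Theorem~\ref{clo-th}. The only nitpick is terminological: a normalized representative has $\min_{j} v(z_{j}) = 0$ (all coordinates in $R$, at least one a unit), not $\max_{j} v(z_{j}) = 0$, though your displayed definition of $V_{i}$ is the right one and the argument is unaffected.
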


\begin{corollary}\label{clo-th-cor-0}
Let $A$ be a closed $\mathcal{L}$-definable subset of
$\,\mathbb{P}^{m}(K)$ or $R^{m}$. Then every continuous
$\mathcal{L}$-definable map $f: A \to K^{n}$ is definably closed
in the $K$-topology.
\end{corollary}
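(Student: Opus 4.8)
The plan is to exhibit $f(B)$, for a closed $\mathcal{L}$-definable $B \subseteq A$, as the image of a closed $\mathcal{L}$-definable set under a canonical projection whose definable closedness is already known: Corollary~\ref{clo-th-cor-1} in the case $A \subseteq \mathbb{P}^{m}(K)$, and the closedness theorem (Theorem~\ref{clo-th}, with $D = K^{n}$) in the case $A \subseteq R^{m}$. The bridge between the hypothesis and these results is the graph of $f$.

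First I would observe that it suffices to prove the following: for every closed $\mathcal{L}$-definable $B \subseteq A$, the image $f(B) \subseteq K^{n}$ is closed; that it is $\mathcal{L}$-definable is automatic, being a projection of an $\mathcal{L}$-definable set. Writing $X$ for the ambient space ($X = \mathbb{P}^{m}(K)$ or $X = R^{m}$), I consider
$$ \Gamma_{B} := \{ (y,a) \in K^{n} \times X : \ a \in B, \ y = f(a) \}, $$
which is $\mathcal{L}$-definable because $A$, $B$ and $f$ are, and whose image under the projection $\pi : K^{n} \times X \to K^{n}$ onto the first factor is exactly $f(B)$.

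The crucial step is to check that $\Gamma_{B}$ is closed in $K^{n} \times X$. Since $K$ carries a metric topology, the target $K^{n}$ is Hausdorff; consequently the graph $\Gamma_{f} = \{ (a,f(a)) : a \in A \}$ of the continuous map $f$ is closed in $A \times K^{n}$. Intersecting with $B \times K^{n}$, which is closed because $B$ is closed in $A$, shows that the graph of $f|_{B}$ is closed in $A \times K^{n}$; as $A$ is itself closed in $X$, this graph is closed in $X \times K^{n}$, and hence its image $\Gamma_{B}$ under the swap homeomorphism $X \times K^{n} \to K^{n} \times X$ is closed in $K^{n} \times X$.

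Finally I would apply the known closedness of projections. Regarding $K^{n} \times X$ as $D \times X$ with $D = K^{n}$, the projection $\pi$ forgetting the $X$-factor is definably closed by Corollary~\ref{clo-th-cor-1} when $X = \mathbb{P}^{m}(K)$ and by Theorem~\ref{clo-th} when $X = R^{m}$. Applied to the closed $\mathcal{L}$-definable set $\Gamma_{B}$, this yields that $f(B) = \pi(\Gamma_{B})$ is closed in $K^{n}$, completing the argument. The one genuinely topological point is the closedness of the graph, which rests on the Hausdorffness of $K^{n}$ together with continuity of $f$; the rest is bookkeeping with definability and the passage of closedness through a product with $X$ and a coordinate swap.
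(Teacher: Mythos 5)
Your proof is correct and is essentially the argument the paper intends: the corollary is one of the five consequences the paper declares to follow ``immediately'' from Theorem~\ref{clo-th}, and the implicit derivation is exactly your graph argument --- the graph of $f|_{B}$ is closed and $\mathcal{L}$-definable in $K^{n}\times\mathbb{P}^{m}(K)$ (resp.\ $K^{n}\times R^{m}$) by continuity of $f$, Hausdorffness of $K^{n}$, and closedness of $B$ in the ambient space, and then Corollary~\ref{clo-th-cor-1} (resp.\ Theorem~\ref{clo-th} with $D=K^{n}$) projects it onto $f(B)$.
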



\begin{corollary}\label{clo-th-cor-2}
Let $\phi_{i}$, $i=0,\ldots,m$, be regular functions on $K^{n}$,
$D$ be an $\mathcal{L}$-definable subset of $K^{n}$ and $\sigma: Y
\longrightarrow K\mathbb{A}^{n}$ the blow-up of the affine space
$K\mathbb{A}^{n}$ with respect to the ideal
$(\phi_{0},\ldots,\phi_{m})$. Then the restriction
$$ \sigma: Y(K) \cap \sigma^{-1}(D) \longrightarrow D $$
is a definably closed quotient map.
\end{corollary}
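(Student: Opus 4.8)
The plan is to reduce everything to Corollary~\ref{clo-th-cor-1} by realizing the blow-up concretely as an incidence variety. Recall that the blow-up $\sigma: Y \to K\mathbb{A}^n$ of affine space along the ideal $(\phi_0,\ldots,\phi_m)$ can be identified with the Zariski closure of the graph of the rational map $x \mapsto [\phi_0(x):\cdots:\phi_m(x)]$ inside $K\mathbb{A}^n \times \mathbb{P}^m$; concretely, $Y$ is the closed subvariety cut out by the bihomogeneous equations $\phi_i(x) T_j - \phi_j(x) T_i = 0$ for $0 \le i < j \le m$, where $[T_0:\cdots:T_m]$ are homogeneous coordinates on $\mathbb{P}^m$, and $\sigma$ is the restriction of the first projection $K\mathbb{A}^n \times \mathbb{P}^m \to K\mathbb{A}^n$. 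Since the $\phi_i$ are polynomials, these equations define a set that is closed in the $K$-topology, so $Y(K)$ is a closed $\mathcal{L}$-definable subset of $K^n \times \mathbb{P}^m(K)$.

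First I would note that $Y(K) \cap \sigma^{-1}(D) = Y(K) \cap (D \times \mathbb{P}^m(K))$ is therefore a closed $\mathcal{L}$-definable subset of $D \times \mathbb{P}^m(K)$, and that under this identification $\sigma$ is nothing but the restriction to this subset of the canonical projection $\pi: D \times \mathbb{P}^m(K) \to D$. By Corollary~\ref{clo-th-cor-1}, $\pi$ is definably closed; and the restriction of a definably closed map to a closed $\mathcal{L}$-definable subset is again definably closed, because closed definable subsets of $Y(K) \cap \sigma^{-1}(D)$ are closed in $D \times \mathbb{P}^m(K)$ and are carried by $\pi$ to closed definable subsets of $D$. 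This gives the definable closedness of $\sigma$.

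Next I would verify surjectivity onto $D$ at the level of $K$-points. Over a point $x \in D$ with $(\phi_0(x),\ldots,\phi_m(x)) \neq 0$ the fiber of $\sigma$ consists of the single $K$-point $[\phi_0(x):\cdots:\phi_m(x)]$, while over a point $x$ in the zero locus of the ideal the whole $\{x\} \times \mathbb{P}^m(K)$ lies in $Y(K)$; in either case the fiber is nonempty, so $\sigma(Y(K) \cap \sigma^{-1}(D)) = D$. Finally, $\sigma$ is continuous (being the restriction of an algebraic projection), surjective, and definably closed, and a surjective continuous definably closed map is automatically a quotient map in the $K$-topology: if $V \subseteq D$ is definable with $\sigma^{-1}(V)$ open, then its complement is closed, hence its image --- which equals $D \setminus V$ by surjectivity together with saturation of $\sigma^{-1}(V)$ --- is closed, so $V$ is open.

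I expect the only genuine work to be the identification of the abstract blow-up with the incidence variety and the attendant check that $Y(K)$ sits as a closed definable subset of $D \times \mathbb{P}^m(K)$; once this is in place, the statement is essentially a packaging of Corollary~\ref{clo-th-cor-1} together with the elementary topological fact that a surjective closed continuous map is a quotient map.
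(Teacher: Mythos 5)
Your reduction to Corollary~\ref{clo-th-cor-1} is exactly the paper's own argument: the paper's entire proof is the one-sentence observation that $Y(K)$ can be regarded as a closed algebraic subvariety of $K^{n}\times\mathbb{P}^{m}(K)$ with $\sigma$ the canonical projection, and your definable-closedness step (restricting the definably closed projection $\pi$ to a closed definable subset) is a correct elaboration of that. The genuine gap is in your surjectivity step. The blow-up of the ideal $(\phi_{0},\ldots,\phi_{m})$ is the Zariski closure of the graph of $x\mapsto[\phi_{0}(x):\cdots:\phi_{m}(x)]$, but it is \emph{not} in general cut out by the equations $\phi_{i}(x)T_{j}-\phi_{j}(x)T_{i}=0$: these define an incidence variety that contains the blow-up and may have extra components over the zero locus of the ideal. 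Take $n=2$, $m=1$, $\phi_{0}=x^{2}$, $\phi_{1}=xy$. Then $\phi_{0}T_{1}-\phi_{1}T_{0}=x\,(xT_{1}-yT_{0})$, so the incidence variety contains the component $\{x=0\}\times\mathbb{P}^{1}$, whereas the blow-up (the closure of the graph of $[x^{2}:xy]=[x:y]$) is just the blow-up of the origin. In particular your claim that over a point of the zero locus ``the whole $\{x\}\times\mathbb{P}^{m}(K)$ lies in $Y(K)$'' fails: over $(0,c)$ with $c\neq 0$ the fiber of this blow-up is the single point $[0:1]$. In general the fiber of a blow-up over a $K$-point of the zero locus is a proper subvariety of $\mathbb{P}^{m}$, and it is not evident that it contains a $K$-point at all; so surjectivity of $\sigma$ on $K$-points, which you need for the quotient-map assertion, is not established by your argument. (Your closedness step survives, because it only uses that $Y$ is Zariski closed in $K\mathbb{A}^{n}\times\mathbb{P}^{m}$, which the closure-of-the-graph description gives directly.)

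The repair stays inside your framework. By what you have already proved, $\sigma(Y(K))$ is a closed $\mathcal{L}$-definable subset of $K^{n}$ (apply the closedness statement with $D=K^{n}$), and it contains the set $K^{n}\setminus\mathcal{Z}(\phi_{0},\ldots,\phi_{m})$, over which $\sigma$ is bijective onto its image. Assuming the ideal is nonzero (as one must, or $Y$ is empty), this complement of a proper Zariski closed subset is dense in $K^{n}$ in the $K$-topology, since Henselian rank one valued fields have the density property recalled in Section~10; hence the closed set $\sigma(Y(K))$ is all of $K^{n}$, and intersecting with $D$ gives $\sigma\left(Y(K)\cap\sigma^{-1}(D)\right)=\sigma(Y(K))\cap D=D$. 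With surjectivity in hand, your closed-plus-continuous-plus-surjective-implies-quotient argument goes through; note that the paper itself leaves both surjectivity and the quotient property implicit.
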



\begin{proof} Indeed,  $Y(K)$ can be regarded as a closed algebraic subvariety of
$K^{n} \times \mathbb{P}^{m}(K)$ and $\sigma$ as the canonical
projection.
\end{proof}


Since the problem is local with respect to the target space, the
above corollary immediately generalizes to the case where the
$K$-variety $Y$ is the blow-up of a smooth $K$-variety $X$.


\begin{corollary}\label{clo-th-cor-3}
Let $X$ be a smooth $K$-variety, $\phi_{i}$, $i=0,\ldots,m$,
regular functions on $X$, $D$ be an $\mathcal{L}$-definable subset
of $X(K)$ and $\sigma: Y \longrightarrow X$ the blow-up of the
ideal $(\phi_{0},\ldots,\phi_{m})$. Then the restriction
$$ \sigma: Y(K) \cap \sigma^{-1}(D) \longrightarrow D $$
is a definably closed quotient map.  \hspace*{\fill} $\Box$
\end{corollary}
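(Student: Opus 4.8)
The plan is to exploit the fact, noted just above, that both definable closedness and the quotient-map property are local with respect to the target $X$, and thereby reduce the assertion to the affine situation already settled in Corollary~\ref{clo-th-cor-2} and, more directly, to the projection theorem of Corollary~\ref{clo-th-cor-1}. First I would record the elementary topological observation, valid in any topological space, that a continuous map $f\colon Z \to X$ is closed provided $X$ admits an open cover $\{U_\alpha\}$ for which each restriction $f\colon f^{-1}(U_\alpha) \to U_\alpha$ is closed: for a closed $C \subseteq Z$ one has $f(C) \cap U_\alpha = f\big(C \cap f^{-1}(U_\alpha)\big)$, which is closed in $U_\alpha$ by hypothesis, and closedness of $f(C)$ is a local property. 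In our setting I would take the $U_\alpha$ to be a finite cover of $X$ by affine Zariski-open subvarieties (finitely many suffice by noetherianity). Such $U_\alpha$ are $\mathcal{L}$-definable and open in the $K$-topology, so the sets $U_\alpha(K) \cap D$ are again $\mathcal{L}$-definable and the whole argument stays within the definable category.

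Next I would treat a single affine chart $U \subseteq X$. Since the formation of the blow-up commutes with the open immersion $U \hookrightarrow X$, the restriction $\sigma^{-1}(U) \to U$ is precisely the blow-up of $U$ along the ideal generated by the restricted functions $\phi_0|_U,\ldots,\phi_m|_U$. Embedding $U$ as a Zariski-closed subvariety of some affine space $K\mathbb{A}^N$, the Proj of the Rees algebra realizes this blow-up as a Zariski-closed subvariety of $U \times \mathbb{P}^m$, hence of $K^N \times \mathbb{P}^m(K)$, with $\sigma$ identified with the canonical projection --- exactly as in the proof of Corollary~\ref{clo-th-cor-2}. Consequently $Y(K) \cap \sigma^{-1}\big(U(K) \cap D\big)$ is a closed $\mathcal{L}$-definable subset of $\big(U(K) \cap D\big) \times \mathbb{P}^m(K)$; any closed definable $B$ inside it is closed in $\big(U(K)\cap D\big) \times \mathbb{P}^m(K)$, and Corollary~\ref{clo-th-cor-1} gives that its image $\sigma(B)$ is closed in $U(K) \cap D$. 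Gluing over the finite cover via the observation of the first paragraph yields definable closedness of $\sigma$ over all of $D$.

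Finally, the quotient-map assertion follows by combining definable closedness with continuity and surjectivity onto $D$, since a continuous closed surjection is automatically a quotient map. Surjectivity onto $D$ is itself local on the target, so it reduces to the affine charts, where it is inherited from the projection mechanism of Corollaries~\ref{clo-th-cor-1} and~\ref{clo-th-cor-2} ($\sigma$ being an isomorphism over the complement of the center, while over $K$-points of the center the exceptional fibres are non-empty projective $K$-varieties carrying $K$-points). The step I expect to demand the most care is not any single computation but the faithful transfer of the purely topological principle ``closedness is local on the target'' to the $\mathcal{L}$-definable $K$-topology: one must ensure that the finite affine cover can be chosen definably and that the local closed images glue to a definable closed image, together with the identification, via the Rees algebra, of the local blow-up as a genuine closed subvariety of $U \times \mathbb{P}^m(K)$ to which Corollary~\ref{clo-th-cor-1} applies verbatim.
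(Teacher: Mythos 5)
Your treatment of definable closedness is correct and is precisely the paper's own argument made explicit: the paper compresses the entire proof into the remark that ``the problem is local with respect to the target space,'' so that Corollary~\ref{clo-th-cor-2} (the blow-up realized, via the Rees algebra, as a Zariski-closed subvariety of affine space times $\mathbb{P}^{m}(K)$, with $\sigma$ the canonical projection, and Corollary~\ref{clo-th-cor-1} applied) generalizes at once. Your finite affine cover, the gluing lemma for closedness, the compatibility of blow-ups with restriction to Zariski-open subvarieties, and the closed embedding $\sigma^{-1}(U)\subset U\times\mathbb{P}^{m}$ are exactly the suppressed details, and they are all handled correctly.

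The one step that is genuinely wrong is the parenthetical by which you justify surjectivity: over a non-algebraically closed field it is false that a non-empty projective $K$-variety must carry a $K$-point (the conic $x^{2}+y^{2}+z^{2}=0$ over $\mathbb{R}$, or over our $K$ the projective hypersurface $G_{r}(x_{1},\ldots,x_{r})=0$ supplied by Lemma~\ref{zero-lemma}, are non-empty and have no $K$-rational points). So one cannot conclude that the exceptional fibre over a $K$-point of the centre contains a $K$-point; this is in fact the only non-formal content of surjectivity. The repair stays inside your own framework: by the closedness you have just established (applied with $B=Y(K)$, $D=X(K)$), the image of $Y(K)\to X(K)$ is closed and $\mathcal{L}$-definable; it contains $X(K)\setminus V(K)$, where $V=V(\phi_{0},\ldots,\phi_{m})$, since $\sigma$ is an isomorphism off $V$; and $X(K)\setminus V(K)$ is dense in $X(K)$ by the density property recalled in Section~10 --- this is where smoothness of $X$ is actually used --- provided the ideal $(\phi_{0},\ldots,\phi_{m})$ does not vanish identically on any irreducible component of $X$ (in the degenerate contrary case the blow-up over that component is empty and surjectivity really does fail, a case the paper tacitly excludes). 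Hence $Y(K)\to X(K)$ is onto and, restricting over $D$, the map is a continuous closed surjection, hence a quotient map. The paper itself is silent on surjectivity, so your proposal is no less complete than the original; but the reason you gave would not survive scrutiny, whereas the closedness-plus-density argument does.
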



\begin{corollary}\label{clo-th-cor-4} (Descent property)
Under the assumptions of the above corollary, every continuous
$\mathcal{L}$-definable function
$$ g: Y(K) \cap \sigma^{-1}(D) \longrightarrow K $$
that is constant on the fibers
of the blow-up $\sigma$ descends to a (unique) continuous
$\mathcal{L}$-definable function $f: D \longrightarrow K$.
\hspace*{\fill} $\Box$
\end{corollary}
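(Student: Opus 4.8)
The plan is to prove the descent property (Corollary~\ref{clo-th-cor-4}) as a direct topological consequence of the preceding Corollary~\ref{clo-th-cor-3}, which tells us that the restricted blow-up map $\sigma: Y(K)\cap\sigma^{-1}(D)\to D$ is a definably closed quotient map. The key observation is that $g$ being constant on the fibers of $\sigma$ means exactly that $g$ factors set-theoretically through $\sigma$: there is a unique function $f: D\to K$ with $g = f\circ\sigma$. Uniqueness is immediate because $\sigma$ is surjective onto $D$ (every point of $D$ is hit, since $\sigma$ restricts to an isomorphism away from the center and the fibers over the center are nonempty projective spaces). So the entire content of the statement is that this set-theoretic $f$ is both continuous in the $K$-topology and $\mathcal{L}$-definable.

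Let me handle definability first, as it is the easier half. The graph of $f$ can be written as
$$ \{(d,t)\in D\times K : \exists\, w\in Y(K)\cap\sigma^{-1}(D),\ \sigma(w)=d \ \text{and}\ g(w)=t\}. $$
Since $g$ and $\sigma$ are $\mathcal{L}$-definable and $Y(K)\cap\sigma^{-1}(D)$ is an $\mathcal{L}$-definable set, this is an $\mathcal{L}$-definable description of the graph of $f$; hence $f$ is $\mathcal{L}$-definable. (One may equivalently invoke elimination of $K$-quantifiers from Theorem~\ref{QE}, but it is not even needed here since definability of the graph suffices.)

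The continuity of $f$ is where the quotient-map property does the real work, and I expect this to be the main point to get right. The strategy is the standard universal property of quotient maps: to show $f$ is continuous it suffices to show that $f^{-1}(Z)$ is closed in $D$ for every closed $Z\subset K$, and since $\sigma$ is a quotient map, $f^{-1}(Z)$ is closed in $D$ precisely when $\sigma^{-1}(f^{-1}(Z)) = (f\circ\sigma)^{-1}(Z) = g^{-1}(Z)$ is closed in $Y(K)\cap\sigma^{-1}(D)$. But $g$ is continuous, so $g^{-1}(Z)$ is indeed closed, and we are done. The only subtlety worth flagging is that the ordinary topological quotient argument works verbatim provided we have genuinely that $\sigma$ is a \emph{quotient} map onto $D$ for the $K$-topology; this is exactly the conclusion packaged into Corollary~\ref{clo-th-cor-3}, where ``definably closed quotient map'' means $\sigma$ is surjective, continuous, and a subset of $D$ is closed iff its preimage is closed. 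Thus the hard analytic content—definable closedness coming from the closedness theorem—has already been absorbed into Corollary~\ref{clo-th-cor-3}, and the descent property itself is then a formal consequence. The resulting $f$ is continuous and $\mathcal{L}$-definable, which completes the proof.
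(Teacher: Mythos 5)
Your proposal is essentially the argument the paper intends: Corollary~\ref{clo-th-cor-4} carries no proof in the paper precisely because it is meant as an immediate formal consequence of Corollary~\ref{clo-th-cor-3}, via exactly the steps you give (set-theoretic factorization of $g$ through the surjection $\sigma$, definability of $f$ from definability of its graph, continuity from the quotient property). One step, however, overreaches as written. The closedness theorem and its corollaries are statements about \emph{definable} sets: $\sigma$ is \emph{definably} closed, i.e.\ it maps $\mathcal{L}$-definable closed sets to closed sets, so the quotient property ``$A$ is closed iff $\sigma^{-1}(A)$ is closed'' is only guaranteed for $\mathcal{L}$-definable subsets $A$ of $D$. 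Your continuity argument quantifies over arbitrary closed $Z \subset K$, for which $f^{-1}(Z)$ need not be definable and for which no closedness statement is available; nothing in the paper makes $\sigma$ a topological quotient map for non-definable sets. The repair is immediate and uses what you have already established: since $f$ is $\mathcal{L}$-definable, it suffices to test continuity on the basis of open balls $B \subset K$. For such $B$, the set $K \setminus B$ is closed and $\mathcal{L}$-definable, so $f^{-1}(K \setminus B)$ is $\mathcal{L}$-definable; its preimage $\sigma^{-1}\left(f^{-1}(K \setminus B)\right) = g^{-1}(K \setminus B)$ is closed by continuity of $g$, hence $f^{-1}(K \setminus B) = \sigma\left(g^{-1}(K \setminus B)\right)$ is closed by surjectivity and definable closedness of $\sigma$, and therefore $f^{-1}(B)$ is open in $D$. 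This gives continuity of $f$ using only what Corollary~\ref{clo-th-cor-3} actually provides.
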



\section{Proof of Theorem~\ref{clo-th} when the valuation is discrete}
The proof given in this section is of algorithmic character.
Through the transfer principle of Ax--Kochen--Ershov (see
e.g.~\cite{Ch}), it suffices to prove Theorem~\ref{clo-th} for the
case where the ground field $K$ is a complete, discretely valued
field of equicharacteristic zero. Such fields are, by virtue of
Cohen's structure theorem, the quotient fields $K=\Bbbk((t))$ of
formal power series rings $\Bbbk[[t]]$ in one variable $t$ with
coefficients from a field $\Bbbk$ of characteristic zero. The
valuation $v$ and the angular component $\overline{ac}$ of a
formal power series are the degree and the coefficient of its
initial monomial, respectively.

\vspace{1ex}

The additive group $\mathbb{Z}$ is an example of ordered
$Z$-group, i.e.\ an ordered abelian group with a (unique) smallest
positive element (denoted by $1$) subject to the following
additional axioms:
$$ \forall \, k \ \, k>0 \; \Rightarrow \; k \geq 1 $$
and
$$ \forall \, k \; \exists \: q \ \: \bigvee_{r=0}^{n-1} \; \, k = nq+r $$
for all integers $n>1$. The language of the value group sort will
be the Presburger language of ordered $Z$-groups, i.e.\ the
language of ordered groups $\{ <,+,-,0 \}$ augmented by $1$ and
binary relation symbols $\equiv_{n}$ for congruence modulo $n$
subject to the axioms:
$$ \forall \, k,r \ \; k \equiv_{n} r \; \Leftrightarrow \; \exists \,
   q \ \,  k-r=nq $$
for all integers $n>1$. This theory of ordered $Z$-groups has quantifier elimination and definable
Skolem (choice) functions. We can replace the above two countable axiom schemas
with universal ones after adding the unary function symbols $\left[ \frac{k}{n} \right]$ of one
variable $k$ for division by $n$ with remainder, which fulfil the
following postulates:
$$ \left[ \frac{k}{n} \right] = q \ \Leftrightarrow \ \bigvee_{r=0}^{n-1} \;
   k = nq+r $$
for all integers $n>1$. The theory of ordered $Z$-groups admits
therefore both quantifier elimination and universal axioms in the
Presburger language augmented by division with remainder. Thus
every definable function is piecewise given by finitely many terms
and, consequently, is piecewise linear.

\vspace{1ex}

In the residue field sort, we can add new relation symbols for all
definable sets and impose suitable postulates. This enables
quantifier elimination for the residue field in the augmented
language. In this fashion, we have full quantifier elimination in
the 3-sorted structure $(K, \mathbb{Z}, \Bbbk)$ with
$K=\Bbbk((t))$.

\vspace{1ex}

Now we can readily pass to the proof of Theorem~\ref{clo-th}
which, of course, reduces easily to the case $m=1$. So let $B$ be
an $\mathcal{L}$-definable closed (in the $K$-topology) subset of
$D \times R_{y} \subset K^{n}_{x} \times R_{y}$. It suffices to
prove that if $a$ lies in the closure of the projection $A :=
\pi(B)$, then there is a point $b \in B$ such that $\pi(b)=a$.

\vspace{1ex}

Without loss of generality, we may assume that $a=0$. Put
$$ \Lambda := \{ (v(x_{1}),\ldots,v(x_{n})) \in \mathbb{Z}^{n}:
   x=(x_{1},\ldots,x_{n}) \in A \}. $$
The set $\Lambda$ contains points all coordinates of which are
arbitrarily large, because the point $a=0$ lies in the closure of
$A$. Hence and by definable choice, $\Lambda$ contains a set
$\Lambda_{0}$ of the form
$$ \Lambda_{0} = \{ (k,\alpha_{2}(k),\ldots,\alpha_{n}(k)) \in \mathbb{N}^{n}:
   k \in \Delta \} \subset \Lambda, $$
where $\Delta \subset \mathbb{N}$ is an unbounded definable subset
and
$$ \alpha_{2},\ldots,\alpha_{n}: \Delta \longrightarrow \mathbb{N} $$
are increasing unbounded functions given by a term (because a
function in one variable given by a term is either increasing or
decreasing). We are going to recursively construct a point $b =
(0,w) \in B$ with $w \in R$ by performing the following procedure
of algorithmic character.

\vspace{1ex}

{\em Step 1.} Let
$$ \Xi_{1} := \{ (v(x_{1}),\ldots,v(x_{n}),v(y)) \in \Lambda_{0} \times \mathbb{N}:
   (x,y) \in B  \}, $$
and
$$ \beta_{1}(k) := \sup \left\{ l \in \mathbb{N}: (k,\alpha_{2}(k),\ldots,\alpha_{n}(k),l) \in
   \Xi_{1} \right\} \in \mathbb{N} \cup \{ \infty \}, \ \ k \in \Lambda_{0}. $$

If $\limsup_{k \rightarrow \infty} \ \beta_{1}(k) = \infty$, there
is a sequence $(x^{(\nu)},y^{(\nu)}) \in B$, $\nu \in \mathbb{N}$,
such that
$$ v(x_{1}^{(\nu)}),\ldots,v(x_{n}^{(\nu)}),v(y^{(\nu)}) \rightarrow
   \infty $$
when $\nu \rightarrow \infty$. Since the set $B$ is a closed
subset of $D \times  R_{y}$, we get
$$ (x^{(\nu)},y^{(\nu)}) \rightarrow 0 \in B \ \ \ \text{when} \ \
   \nu \rightarrow \infty, $$
and thus $w=0$ is the point we are looking for. Here the process
stops. Otherwise
$$ \Lambda_{1} \times \{ l_{1} \} \subset \Xi_{1} $$
for some infinite definable subset $\Lambda_{1}$ of $\Lambda_{0}$
and $l_{1} \in \mathbb{N}$. The set
$$ \{ (v(x_{1}),\ldots,v(x_{n});\overline{ac}(y)) \in \Lambda_{1} \times
   \Bbbk:
 (x,y) \in B, \ v(y)=l_{1} \} $$
is definable in the language $\mathcal{L}$. By full quantifier
elimination, it is given by a quantifier-free formula with
variables only from the value group $\Gamma$-sort and the residue
field $\Bbbk$-sort. Therefore there is a finite partitioning of
$\Lambda_{1}$ into definable subsets over each of which the fibres
of the above set are constant, because quantifier-free
$\mathcal{L}$-definable subsets of the product $\mathbb{Z}^{n}
\times \Bbbk$ of the two sorts are finite unions of the Cartesian
products of definable subsets in $\mathbb{Z}^{n}$ and in $\Bbbk$,
respectively. One of those definable subsets, say $\Lambda_{1}'$,
must be infinite. Consequently, for some $\xi_{1} \in \Bbbk$, the
set
$$ \Xi_{2} := \{ (v(x_{1}),\ldots,v(x_{n}),v(y-\xi_{1} t^{l_{1}})) \in
   \Lambda_{1}' \times \mathbb{N}: (x,y) \in B \} $$
contains points of the form $(k,l) \in \mathbb{N}^{n+1}$, where $k
\in \Lambda_{1}'$ and $l > l_{1}$.

\vspace{1ex}

{\em Step 2.} Let
$$ \beta_{2}(k) := \sup \left\{ l \in \mathbb{N}: (k,\alpha_{2}(k),\ldots,\alpha_{n}(k),l) \in
   \Xi_{2} \right\} \in \mathbb{N} \cup \{ \infty \}, \ \ k \in \Lambda_{1}'. $$
If $\limsup_{k \rightarrow \infty} \ \beta_{2}(k) = \infty$, there
is a sequence $(x^{(\nu)},y^{(\nu)}) \in B$, $\nu \in \mathbb{N}$,
such that
$$ v(x_{1}^{(\nu)}),\ldots,v(x_{n}^{(\nu)}),v(y^{(\nu)} -\xi_{1} t^{l_{1}}) \rightarrow
   \infty $$
when $\nu \rightarrow \infty$. Since the set $B$ is a closed
subset of $D \times  R_{y}$, we get
$$ (x^{(\nu)},y^{(\nu)}) \rightarrow (0,\xi_{1} t^{l_{1}}) \in B \ \ \ \text{when} \ \
   \nu \rightarrow \infty, $$
and thus $w=\xi_{1} t^{l_{1}}$ is the point we are looking for.
Here the process stops. Otherwise
$$ \Lambda_{2} \times \{ l_{2} \} \subset \Xi_{2} $$
for some infinite definable subset $\Lambda_{2}$ of $\Lambda_{1}'$
and $l_{2} > l_{1}$. Again, for some $\xi_{2} \in \Bbbk$, the set
$$ \Xi_{3} := \{ (v(x_{1}),\ldots,v(x_{n}),v(y-\xi_{1} t^{l_{1}} - \xi_{2} t^{l_{2}}))
   \in \Lambda_{2}' \times \mathbb{N}: (x,y) \in B \} $$
contains points of the form $(k,l) \in \mathbb{N}^{n+1}$, where $k
\in \Lambda_{2}'$, $\Lambda_{2}'$ is an infinite definable subset
of $\Lambda_{2}$ and $l > l_{2}$.

\vspace{1ex}

{\em Step 3} is carried out in the same way as the previous ones;
and so on.

\vspace{1ex}

In this fashion, the process either stops after a finite number of
steps and then yields the desired point $w \in R$ (actually, $w
\in \Bbbk[t]$) such that $(0,w) \in B$, or it does not stop and
then yields a formal power series
$$ w := \xi_{1}t^{l_{1}} + \xi_{2} t^{l_{2}} + \xi_{3} t^{l_{3}} +
   \ldots, \ \ \ 0 \leq l_{1} < l_{2} < l_{3} < \ldots  $$
such that for each $\nu \in \mathbb{N}$ there exists an element
$(x^{(\nu)},y^{(\nu)}) \in B$ for which
$$ v(y^{(\nu)} - \xi_{1} t^{l_{1}} - \xi_{2} t^{l_{2}} - \ldots -
   \xi_{\nu} t^{l_{\nu}}) \geq  l_{\nu} +1 \geq \nu , \ \ \
   v(x_{1}^{(\nu)}), \ldots ,v(x_{1}^{(\nu)}) \geq \nu. $$
Hence $v(y^{(\nu)} - w) \geq \nu$, and thus the sequence
$(x^{(\nu)},y^{(\nu)})$ tends to the point $b := (0,w)$ when $\nu$
tends to $\infty$. Since the set $B$ is a closed subset of $D
\times  R$, the point $b$ belongs to $B$, which completes the
proof. \hspace*{\fill} $\Box$


\section{Definable functions of one variable}

Consider first a complete rank one valued field $L$. For every
non-negative integer $r$, let $L \{ x \}_{r}$ be the local ring of
all formal power series
$$ \phi(x) = \sum_{k=0}^{\infty} a_{k} x^{k} \in L[[ x ]] $$
in one variable $x$ such that $v(a_{k}) + kr$ tends to $\infty$
when $k \rightarrow \infty$; $L \{ x \}_{0}$ coincides with the
ring of restricted formal power series. Then the local ring
$$ L\{ x \} := \bigcup_{r=0}^{\infty} L\{ x \}_{r} $$
is Henselian, which can be directly deduced by means of the
implicit function theorem for restricted power series in several
variables (see~\cite[Chap.~III, \S~4]{Bour}, \cite{Fish} and
also~\cite[Chap.~I, \S~5]{G-R}).

\vspace{1ex}

We keep the assumption that the ground field $K$ is a Henselian
rank one valued field of equicharacteristic zero. Let $L$ be the
completion of the algebraic closure $\overline{K}$ of $K$.
Clearly, the Henselian local ring $L\{ x \}$ is closed under
division by the coordinate and power substitution. Therefore it
follows from our paper~\cite[Section~2]{Now} that Puiseux's
theorem holds for $L\{ x \}$. We still need an auxiliary lemma.


\begin{lemma}\label{closed-lemma}
The field $K$ is a closed subspace of its algebraic closure
$\overline{K}$.
\end{lemma}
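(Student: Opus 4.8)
The plan is to reduce the statement to Krasner's lemma. First I would recall that, $K$ being Henselian, the rank one valuation $v$ extends uniquely to a rank one valuation on the algebraic closure $\overline{K}$, endowing $\overline{K}$ with an absolute value whose restriction to $K$ induces the given $K$-topology. Moreover, every $K$-embedding of $\overline{K}$ into itself preserves this absolute value, so an element and all of its conjugates over $K$ share the same absolute value. Consequently $\overline{K}$ is a metric space whose subspace topology on $K$ is the original one, and it suffices to show that the topological closure of $K$ in $\overline{K}$ equals $K$.

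Next I would take an arbitrary point $\alpha \in \overline{K}$ lying in the closure of $K$ and argue that in fact $\alpha \in K$. Suppose, toward a contradiction, that $d := [K(\alpha):K] \geq 2$. Since $K$ is of equicharacteristic zero, $\alpha$ is separable over $K$ and thus possesses $d$ pairwise distinct conjugates $\alpha = \alpha_{1}, \alpha_{2}, \ldots, \alpha_{d} \in \overline{K}$; put $\delta := \min_{2 \leq i \leq d} |\alpha - \alpha_{i}|$, which is positive because the $\alpha_{i}$ are distinct and finite in number. As $\alpha$ lies in the closure of $K$, I may choose $a \in K$ with $|a - \alpha| < \delta$.

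Then $|a - \alpha| < |\alpha - \alpha_{i}|$ for every $i = 2, \ldots, d$, so Krasner's lemma --- valid over the Henselian field $K$ --- yields $K(\alpha) \subseteq K(a) = K$, whence $\alpha \in K$, contradicting $d \geq 2$. Therefore the closure of $K$ in $\overline{K}$ coincides with $K$, which proves the lemma. The only non-elementary inputs are the uniqueness of the valuation extension to $\overline{K}$ and Krasner's lemma over Henselian fields; securing a clean form of the latter in the merely Henselian (rather than complete) setting, and verifying that equicharacteristic zero does supply the separability it requires, is the part I expect to demand the most care.
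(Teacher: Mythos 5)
Your proof is correct, but it follows a genuinely different route from the paper's. The paper argues via the completion: writing $\widehat{K}$ for the completion of $K$ and $L$ for the completion of $\overline{K}$, it identifies $\mathrm{cl}(K,\overline{K}) = \widehat{K} \cap \overline{K}$ and then invokes the Ax--Kochen--Ershov transfer principle to conclude that $K$ is an elementary substructure of $\widehat{K}$, hence algebraically closed in $\widehat{K}$, which forces $\widehat{K} \cap \overline{K} = K$; it also notes the alternative of citing algebraic maximality of $K$ (Henselian and finitely ramified). Your argument instead runs through Krasner's lemma: Henselianity gives the unique extension of $v$ to $\overline{K}$ (so conjugate elements have equal absolute value), equicharacteristic zero gives separability, and any $\alpha \in \overline{K} \setminus K$ of degree $d \geq 2$ in the closure of $K$ would admit $a \in K$ strictly closer to $\alpha$ than its other conjugates, whence $K(\alpha) \subseteq K(a) = K$, a contradiction. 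Your residual worry is unfounded: Krasner's lemma is valid for arbitrary Henselian valued fields (it needs only the uniqueness of the valuation extension to $\overline{K}$, not completeness), and a clean statement can be found in Engler--Prestel, the very reference \cite{E-Pre} the paper cites. What your approach buys is independence from model theory (no AKE, no elementary substructures) and from the completion construction; in particular it works verbatim for Henselian valuations of arbitrary rank, which is in the spirit of the paper's Remark 1.1 that several results should extend beyond rank one. What the paper's approach buys is economy within its own framework: the AKE principle and the completion are already standing tools there (they reappear in Section 4), and the same two-line argument simultaneously exhibits $K$ as elementarily embedded in $\widehat{K}$, a fact of independent use.
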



\begin{proof} This follows directly from that the field $K$ is
algebraically maximal (as it is Henselian and finitely ramified;
see e.g.~\cite[Chap.~4]{E-Pre}), but can also be shown as follows.
Denote by $\text{cl}\, (E,F)$ the closure of a subset $E$ in $F$,
and let $\widehat{K}$ be the completion of $K$. We have
$$ \text{cl}\, (K,\overline{K}) = \text{cl}\, (K,L) \cap
   \overline{K} = \widehat{K} \cap \overline{K}. $$
Now, through the transfer principle of Ax-Kochen--Ershov (see
e.g.~\cite{Ch}), $K$ is an elementary substructure of
$\widehat{K}$ and, a fortiori, is algebraically closed in
$\widehat{K}$. Hence \ $\text{cl}\, (K,\overline{K}) = \widehat{K}
\cap \overline{K} = K$,\ as asserted.
\end{proof}


Now consider an irreducible polynomial
$$ P(x,y) = \sum_{i=0}^{d} p_{i}(x)y^{i} \in K[x,y] $$
in two variables $x,y$ of $y$-degree $d \geq 1$. Let $Z$ be the
Zariski closure of its zero locus in $\overline{K} \times
\mathbb{P}^{1}(\overline{K})$. Performing a linear fractional
transformation over the ground field $K$ of the variable $y$, we
can assume that the fiber $\{ w_{1},\ldots,w_{s} \}$, $s \leq d$,
of $Z$ over $x=0$ does not contain the point at infinity, i.e.\
$w_{1},\ldots,w_{s} \in \overline{K}$. Then $p_{d}(0) \neq 0$ and
$p_{d}(x)$ is a unit in $L\{ x \}$. Via Hensel's lemma, we get the
Hensel decomposition
$$ P(x,y) = p_{d}(x) \cdot \prod_{j=1}^{s} P_{j}(x,y) $$
of $P(x,y)$ into polynomials
$$ P_{j}(x,y) = (y-w_{j})^{d_{j}} + p_{j1}(x) (y-w_{j})^{d_{j-1}} +
   \cdots + p_{jd_{j}}(x) \in L\{x \} [y-w_{j}] $$
which are Weierstrass with respect to $y-w_{j}$, $j=1,\ldots,s$,
respectively. By Puiseux's theorem, there is a neighbourhood $U$
of $0 \in \overline{K}$ such that the trace of $Z$ on $U \times
\overline{K}$ is a finite union of sets of the form
$$ Z_{\phi_{j}} = \{ (x^{q},\phi_{j}(x)): x \in U \} \ \ \text{with} \ \
   \phi_{j} \in L \{x \}, \ q \in \mathbb{N}, \ j=1,\ldots,s. $$
Obviously, for $j=1,\ldots,s$, the fiber of $Z_{\phi_{j}}$ over $x
\in U$ tends to the point $\phi_{j}(0)= w_{j}$ when $x \rightarrow
0$.

\vspace{1ex}

If $\phi_{j}(0) \in \overline{K} \setminus K$, it follows from
Lemma~\ref{closed-lemma} that
$$ Z_{\phi_{j}} \cap ((U \cap K) \times K) = \emptyset, $$
after perhaps shrinking the neighbourhood $U$.

Let us mention that if
$$  \phi_{j}(0) \in K  \ \ \ \text{and} \ \ \
    \phi_{j} \in L \{x \} \setminus \widehat{K} \{x \}, $$
then
$$ Z_{\phi_{j}} \cap ((U \cap K) \times K) = \{ (0,\phi_{j}(0)) \} $$
after perhaps shrinking the neighbourhood $U$. Indeed, let
$$ \phi_{j}(x) = \sum_{k=0}^{\infty} a_{k} x^{k} \in L[[ x ]] $$
and $p$ be the smallest positive integer with $a_{p} \in L
\setminus \widehat{K}$. Since $\widehat{K}$ is a closed subspace
of $L$, we get
$$ \sum_{k=p}^{\infty} a_{k} x^{k} = x^{p} \left( a_{p} + x \cdot
   \sum_{k=p+1}^{\infty} a_{k} x^{k-(p+1)} \right) \not \in \widehat{K} $$
for $x$ close enough to $0$, and thus the assertion follows.

\vspace{1ex}

Suppose now that an $\mathcal{L}$-definable function $f:A \to K$
satisfies the equation
$$ P(x,f(x))=0 \ \ \ \text{for} \ \ x \in A $$
and $0$ is an accumulation point of the set $A$. It follows
immediately from the foregoing discussion that the set $A$ can be
partitioned into a finite number of $\mathcal{L}$-definable sets
$A_{j}$, $j=1,\ldots,r$ with $r \leq s$, such that, after perhaps
renumbering of the fiber $\{ w_{1},\ldots,w_{s} \}$ of the set $\{
P(x,f(x))=0 \}$ over $x=0$, we have
$$ \lim_{x \rightarrow 0}\, f|A_{j}\, (x) = w_{j} \ \ \
   \text{for each} \ \ j=1,\ldots,r. $$

Hence and by Corollary~\ref{def-fun}, we immediately obtain the
following


\begin{proposition}\label{limit-th} (Existence of the limit)
Let $f:A \to K$ be an $\mathcal{L}$-definable function on a subset
$A$ of $K$ and suppose $0$ is an accumulation point of $A$. Then
there is a finite partition of $A$ into $\mathcal{L}$-definable
sets $A_{1},\ldots,A_{r}$ and points $w_{1}\ldots,w_{r} \in
\mathbb{P}^{1}(K)$ such that
$$ \lim_{x \rightarrow 0}\, f|A_{j}\, (x) = w_{j} \ \ \ \text{for} \ \
   j=1,\ldots,r. $$
Moreover, there is a neighbourhood $U$ of $0$ such that each
definable set
$$ \{ (v(x), v(f(x))): \; x \in (A_{j} \cap U) \setminus \{0 \} \}
   \subset \Gamma \times (\Gamma \cup \ \{
   \infty \}),  \ j=1,\ldots,r, $$
is contained in an affine line with rational slope
$$ l = \frac{p_{j}}{q} \cdot k + \beta_{j}, \ j=1,\ldots,r, $$
with $p_{j},q \in \mathbb{Z}$, $q>0$, $\beta_{j} \in \Gamma$, or
in\/ $\Gamma \times \{ \infty \}$. \hspace*{\fill} $\Box$
\end{proposition}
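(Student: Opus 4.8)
The plan is to obtain both assertions from the analytic picture developed in the discussion preceding the statement, feeding into it the reduction supplied by Corollary~\ref{def-fun}. First I would apply Corollary~\ref{def-fun} to partition $A$ into $\mathcal{L}$-definable pieces on each of which $f$ satisfies an irreducible polynomial identity $P(x,f(x))=0$ with $P(x,y)$ of $y$-degree $d\geq 1$ (the degree is positive because $P(a,\cdot)$ does not vanish yet has the root $f(a)$). Working on one such piece, I would invoke the Weierstrass--Hensel decomposition of $P$ over the Henselian ring $L\{x\}$, where $L$ is the completion of $\overline{K}$, together with Puiseux's theorem for $L\{x\}$ from \cite{Now}, to present the zero locus of $P$ over a punctured neighbourhood of $0$ as a finite union of branches $Z_{\phi_{j}}=\{(x^{q},\phi_{j}(x))\}$ with $\phi_{j}\in L\{x\}$ and $\phi_{j}(0)=w_{j}\in\mathbb{P}^{1}(K)$. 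This is exactly the content already assembled above; combined with Lemma~\ref{closed-lemma}, which lets me discard the branches whose limit lies in $\overline{K}\setminus K$, it yields the finite $\mathcal{L}$-definable partition $A=A_{1}\cup\cdots\cup A_{r}$ and the existence of the limits $w_{j}$.

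For the ``moreover'' clause I would analyse a single branch. After the fractional linear transformation over $K$ that keeps the relevant fiber finite, a point $(x,f(x))$ on the branch $Z_{\phi_{j}}$ is parametrised by $x=\tau^{q}$ with $f(x)=\phi_{j}(\tau)$ (and, for the branch tending to $w_{j}=\infty$, by the expansion of $1/f$, which is again the restriction of a Puiseux branch). Writing $\phi_{j}(\tau)=\sum_{k\geq k_{0}}a_{k}\tau^{k}$ with $a_{k_{0}}\neq 0$, the key computation is that for $v(\tau)$ large enough the lowest-order term dominates, so that
$$ v(\phi_{j}(\tau)) = v(a_{k_{0}}) + k_{0}\,v(\tau). $$
Substituting $v(\tau)=v(x)/q$ gives $v(f(x))=\tfrac{k_{0}}{q}\,v(x)+v(a_{k_{0}})$, which is the asserted affine line with rational slope $p_{j}/q=k_{0}/q$ and intercept $\beta_{j}=v(a_{k_{0}})$; a negative slope arises precisely from the branch with $w_{j}=\infty$ (through $1/f$), and the degenerate case $f\equiv 0$ on a piece produces the alternative $\Gamma\times\{\infty\}$. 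The intercept $\beta_{j}$ lies in $\Gamma$ because it equals $v(f(x))-\tfrac{p_{j}}{q}v(x)$ at any point of the piece, both coordinates being valuations of elements of $K$.

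The step I expect to be the main obstacle is justifying the domination of the leading term, i.e.\ that $v(\phi_{j}(\tau))$ equals $v(a_{k_{0}})+k_{0}v(\tau)$ for all $\tau$ close to $0$ with a \emph{uniform} threshold; this is what produces the neighbourhood $U$. Here I would exploit the defining growth condition of $L\{x\}_{r}$, namely $v(a_{k})+kr\to\infty$, to bound $\sup_{k>k_{0}}\tfrac{v(a_{k_{0}})-v(a_{k})}{k-k_{0}}$ by a finite quantity (asymptotically $r$); for $v(\tau)$ exceeding this bound the minimum of the valuations $v(a_{k})+k\,v(\tau)$ is attained uniquely at $k=k_{0}$, which yields the claimed equality and fixes the threshold defining $U$.

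Finally, I would secure the $\mathcal{L}$-definability of the refined partition. Once the valuation graph of each piece is known to lie on one of finitely many rational-slope lines, I refine $A_{1},\ldots,A_{r}$ by the $\Gamma$-sort condition ``$(v(x),v(f(x)))$ satisfies $l=\tfrac{p_{j}}{q}k+\beta_{j}$'' and by finitely many $\overline{ac}$-conditions pinning down the associated $w_{j}$; all of these are $\mathcal{L}$-definable, so the partition can be taken $\mathcal{L}$-definable with slope, intercept and limit constant on each part, as required.
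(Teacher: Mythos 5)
Your proposal is correct and follows essentially the same route as the paper: Corollary~\ref{def-fun} to reduce to an algebraic relation $P(x,f(x))=0$, the Hensel--Weierstrass decomposition over $L\{x\}$ together with Puiseux's theorem from~\cite{Now}, and Lemma~\ref{closed-lemma} to discard branches limiting in $\overline{K}\setminus K$; the paper's own treatment of the ``moreover'' clause is in fact only a remark (``relies on Puiseux's parametrization''), and your leading-term estimate using the growth condition $v(a_{k})+kr\to\infty$ is a faithful filling-in of that gap. The only blemish, which the paper's statement shares, is the claim $\beta_{j}\in\Gamma$: your formula $\beta_{j}=v(f(x))-\tfrac{p_{j}}{q}v(x)$ only places the intercept in $\tfrac{1}{q}\Gamma$.
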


\begin{remark} Note that the first conclusion (existence of the
limit) could also be established via the lemma on the continuity
of roots of a monic polynomial (which can be found in
e.g.~\cite[Chap.~3, \S~4]{B-G-R}). Yet another approach for the
case of tame theories is provided in \cite[Lemma~2.20]{For}. The
second conclusion relies on Puiseux's parametrization.
\end{remark}


\section{Fiber shrinking for definable sets}

Let $A$ be an $\mathcal{L}$-definable subset of $K^{n}$ with
accumulation point
$$ a = (a_{1},\ldots,a_{n}) \in K^{n} $$
and $E$ an
$\mathcal{L}$-definable subset of $K$ with accumulation point
$a_{1}$. We call an $\mathcal{L}$-definable family of sets
$$ \Phi = \bigcup_{t \in E} \ \{ t \} \times \Phi_{t} \subset A $$
an $\mathcal{L}$-definable $x_{1}$-fiber shrinking for the set $A$
at $a$ if
$$ \lim_{t \rightarrow a_{1}} \, \Phi_{t} = (a_{2},\ldots,a_{n}),
$$
i.e.\ for any neighbourhood $U$ of $(a_{2},\ldots,a_{n}) \in
K^{n-1}$, there is a neighbourhood $V$ of $a_{1} \in K$ such that
$\emptyset \neq \Phi_{t} \subset U$ for every $t \in V \cap E$, $t
\neq a_{1}$. When $n=1$, $A$ is itself a fiber shrinking for the
subset $A$ of $K$ at an accumulation point $a \in K$. This concept
is a relaxed version of curve selection. It is used in Sections~7
and~8 in the proofs of the closedness theorem and a certain
version of curve selection.


\begin{proposition}\label{FS} (Fiber shrinking)
Every $\mathcal{L}$-definable subset $A$ of $K^{n}$ with
accumulation point $a \in K^{n}$ has, after a permutation of the
coordinates, an $\mathcal{L}$-definable $x_{1}$-fiber shrinking at
$a$.
\end{proposition}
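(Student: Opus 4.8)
The plan is to produce the fiber shrinking directly, by parametrising $A$ along the coordinate in which it approaches $a$ the slowest; the ultrametric inequality then forces all the remaining coordinates to contract automatically, so that no curve selection and, in fact, no appeal to Proposition~\ref{limit-th} is required.

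First I would replace $A$ by $A \setminus \{a\}$, which leaves the accumulation point $a$ and the $\mathcal{L}$-definability untouched, so that $v(x_i - a_i) \in \Gamma \cup \{\infty\}$ is finite in at least one coordinate for every $x \in A$. Since $a$ is an accumulation point, the quantity $\min_{1 \le i \le n} v(x_i - a_i)$ is unbounded above on $A$. For $x \in A$ let $j(x)$ be the least index realising this minimum; the conditions $j(x) = j$ are $\mathcal{L}$-definable, being phrased entirely in the $\Gamma$-sort through $v$ and the order. This yields a finite $\mathcal{L}$-definable partition $A = A^{(1)} \cup \cdots \cup A^{(n)}$, and since $a$ is an accumulation point of a finite union exactly when it is one of some member, I may fix a piece of which $a$ is an accumulation point and, after permuting the coordinates, assume it is $A^{(1)} =: A'$. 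By construction $v(x_1 - a_1) \le v(x_i - a_i)$ for all $x \in A'$ and all $i = 2, \ldots, n$.

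Now set $E := \{x_1 : x \in A'\}$, let $\Phi_t$ be the fiber of $A'$ over $t \in E$, and put $\Phi := A' = \bigcup_{t \in E} \{t\} \times \Phi_t$; all of these are $\mathcal{L}$-definable, $\Phi \subset A$, and $\Phi_t \ne \emptyset$ for every $t \in E$. That $a_1$ is an accumulation point of $E$ is immediate: a point $x \in A'$ close to $a$ but distinct from $a$ cannot have $x_1 = a_1$, for that would force $v(x_1 - a_1) = \infty$ and hence, by the displayed inequality, $x_i = a_i$ for all $i$; thus such $x$ have $x_1 \ne a_1$ with $x_1$ arbitrarily close to $a_1$. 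Finally, given a neighbourhood $U$ of $(a_2, \ldots, a_n)$, fix $\gamma \in \Gamma$ with $\{ y : v(y_i - a_i) > \gamma,\ i = 2, \ldots, n \} \subset U$ and take $V := \{ t : v(t - a_1) > \gamma \}$. For $t \in V \cap E$ with $t \ne a_1$, the fiber $\Phi_t$ is nonempty and every $(x_2, \ldots, x_n) \in \Phi_t$ satisfies $v(x_i - a_i) \ge v(t - a_1) > \gamma$, so $\Phi_t \subset U$; this is precisely $\lim_{t \to a_1} \Phi_t = (a_2, \ldots, a_n)$.

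The only genuinely substantive choice is the first one: singling out the coordinate along which the approach to $a$ is slowest and promoting it to the parameter. Once $x_1$ is that coordinate, the contraction of the whole fibre $\Phi_t$ is not something one must arrange but a free consequence of $v(x_i - a_i) \ge v(x_1 - a_1)$, which is why the argument controls entire fibres rather than merely a selected curve and so yields the \emph{relaxed} form of curve selection asserted in Proposition~\ref{FS}. I expect no essential obstacle beyond verifying the $\mathcal{L}$-definability of the partition by $j(x)$, which is routine since it involves only comparisons in the value group.
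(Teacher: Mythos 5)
Your proof is correct, and it takes a genuinely different, more elementary route than the paper's. The paper proceeds by induction on $n$: either $a$ is an accumulation point of some intersection $A \cap \{x_i = a_i\}$ (handled by the induction hypothesis), or $A$ misses all these hyperplanes, in which case the image of $A$ under $x \mapsto (v(x_1-a_1),\ldots,v(x_n-a_n))$ is analysed using Presburger quantifier elimination for the value group --- this is where the rank one (archimedean) hypothesis enters, via semi-linear subsets of $\mathbb{R}^{n}$ --- and one extracts an arithmetic progression $L_0$ along an affine semi-line with positive integer slopes, taking $\Phi := \{x \in A : (v(x_1-a_1),\ldots,v(x_n-a_n)) \in L_0\}$, whose fibers shrink because all the valuations grow linearly together along $L_0$. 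You instead partition $A \setminus \{a\}$ definably according to which coordinate realizes $\min_i v(x_i-a_i)$, pick a piece accumulating at $a$, and observe that the inequality $v(x_1-a_1) \le v(x_i-a_i)$ defining that piece already forces every fiber over $t$ into the polydisc of radius $v(t-a_1)$ about $(a_2,\ldots,a_n)$; your verifications (definability of the partition, $a_1$ being an accumulation point of $E$, nonemptiness of the fibers over $E$) are all sound, though your phrase ``ultrametric inequality'' is a slight misnomer --- what does the work is simply the defining inequality of the piece. Your argument needs no induction, no quantifier elimination in the $\Gamma$-sort, and no rank one assumption, so it in fact proves fiber shrinking over Henselian valued fields of arbitrary rank, which bears on the remark in the Introduction conjecturing that the closedness theorem and curve selection extend to that generality. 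What the paper's heavier construction buys is the explicit structure of $\Phi$, with valuations confined to an arithmetic progression on a semi-line: that structure, and not merely the statement of Proposition~\ref{FS}, is invoked again in the proof of curve selection (Proposition~\ref{GCSL}), which twice appeals to ``analysis from the proof of Proposition~\ref{FS}''; if your proof were substituted for the paper's, that semi-line analysis would have to be supplied separately in Section~8.
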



\begin{proof} We proceed with induction with respect to the
dimension of the ambient affine space $n$. The case $n=1$ is
trivial. So assuming the assertion to hold for $n$, we shall prove
it for $n+1$. We may, of course, assume that $a=0$. Let
$x=(x_{1},\ldots,x_{n+1})$ be coordinates in $K^{n}_{x}$.

\vspace{1ex}

If $0$ is an accumulation point of the intersections
$$ A \cap \{ x_{i} = 0 \}, \ \ i=1,\ldots,n +1, $$
we are done by the induction hypothesis. Thus we may assume that
the intersection
$$ A \cap  \bigcup_{i=1}^{n+1} \ \{ x_{i} = 0 \} = \emptyset  $$
is empty. Then the definable (in the $\Gamma$-sort) set
$$ P := \{ (v(x_{1}),\ldots,v(x_{n+1})) \in \Gamma^{n+1}: \; x
   \in A \} $$
has an accumulation point $(\infty,\ldots,\infty)$.

\vspace{1ex}

Since the $\Gamma$-sort admits quantifier elimination in the
language of ordered groups augmented by binary relation symbols
$\equiv_{n}$ for congruence modulo $n$, every definable subset of
$\Gamma^{n+1}$ is a finite union of subsets of semi-linear sets
contained in $\Gamma^{n+1}$ that are determined by a finite number
of congruences
\begin{equation}\label{cong}
  \sum_{j=1}^{n+1} \ r_{ij} \cdot k_{j} \equiv_{N} \alpha_{i}, \ \
  i=1,\ldots,s;
\end{equation}
here $N \in \mathbb{N}$, $N > 1$, $r_{ij} \in \mathbb{Z}$,
$\alpha_{i} \in \Gamma$ for $i=1,\ldots,s$, $j=1,\ldots,n+1$.

\vspace{1ex}

Consequently, there exists a semi-linear subset $P_{0}$ of
$\mathbb{R}^{n+1}$ given by finitely many linear equations and
inequalities with integer coefficients and with constant terms
from $\Gamma$ such that the subset $P_{1}$ of $P_{0} \cap
\Gamma^{n+1}$ determined by congruences of the form ~\ref{cong} is
contained in $P$ and has an accumulation point
$(\infty,\ldots,\infty)$. Therefore there exists an affine
semi-line
$$ L := \left\{ \left( r_{1} \cdot k + \gamma_{1}, \ldots,
   r_{n+1} \cdot k + \gamma_{n+1} \right) :
   \; k \in \Gamma, \, k \geq 0 \right\}, $$
where $r_{1},\ldots,r_{n+1}$ are positive integers, passing
through a point
$$ (\gamma_{1},\ldots,\gamma_{n+1}) \in P_{1} \subset \Gamma^{n+1} $$
and contained in $P_{0}$. It is easy to check that the set
$$ L_{0} := \{ (\gamma_{1} + rr_{1}N, \ldots, \gamma_{n+1} + r r_{n+1}N) :
   \ r \in \mathbb{N} \} \subset P_{1} $$
is contained in $P_{1}$. Then
$$ \Phi := \{ x \in A:
   (v(x_{1}),\ldots,v(x_{n+1})) \in L_{0} \} $$
is an $\mathcal{L}$-definable $x_{1}$-fiber shrinking for the set
$A$ at $0$. This finishes the proof.
\end{proof}

\section{Proof of Theorem~\ref{clo-th} for the general case}
The proof reduces easily to the case $m=1$. We must show that if
$B$ is an $\mathcal{L}$-definable subset of $D \times R$ and a
point $a$ lies in the closure of $A := \pi(B)$, then there is a
point $b$ in the closure of $B$ such that $\pi(b)=a$. We may
obviously assume that $a = 0 \not \in A$. By Proposition~\ref{FS},
there exists, after a permutation of the coordinates, an
$\mathcal{L}$-definable $x_{1}$-fiber shrinking $\Phi$ for $A$ at
$a$:
$$ \Phi = \bigcup_{t \in E} \ \{ t \} \times \Phi_{t} \subset A, \ \ \
   \lim_{t \rightarrow 0} \, \Phi_{t} = 0; $$
here $E$ is the canonical projection of $A$ onto the $x_{1}$-axis.
Put
$$ B^{*} := \{ (t,y)  \in K \times R: \ \exists \; u \in \Phi_{t}
   \ (t,u,y) \in B \};  $$
it easy to check that if a point $(0,w) \in K^{2}$ lies in the
closure of $B^{*}$, then the point $(0,w) \in K^{n+1}$ lies in the
closure of $B$. The problem is thus reduced to the case $n=1$ and
$a=0 \in K$.

\vspace{1ex}

By Corollary~\ref{CD1}, we can assume that $B$ is a subset $F$ of
a cell $C$
$$ F \subset C \subset K_{x} \times R \subset K_{x} \times K_{y} $$
of the form
$$ F(\xi) := \{ (x,y) \in K_{x} \times K_{y}:\ (x,\xi) \in D,
$$
$$ v(a(x,\xi)) \lhd_{1} v((y-c(x,\xi))^{\nu}) \lhd_{2} v(b(x,\xi)), \
   \overline{ac} (y-c(x,\xi)) = \xi_{1}, $$
$$ v\left( f_{i}(x,\xi) (y -
   c(x,\xi))^{k_{i}} \right) \equiv_{M} 0, \ i=1,\ldots,s \}. $$
But the set
$$ \{ (v(x),\xi) \in \Gamma \times
   \Bbbk^{m}: \;  \exists \, y \in R \ \, (x,y) \in F(\xi) \} $$
is an $\mathcal{L}$-definable subset of the product $\Gamma \times
\Bbbk^{m}$ of the two sorts, which is, by elimination of
$K$-quantifiers, a finite union of the Cartesian products of
definable subsets in $\Gamma$ and in $\Bbbk^{m}$, respectively. It
follows that $0$ is an accumulation point of the projection
$\pi(F(\xi'))$ of the fiber $F(\xi')$ for a parameter $\xi' \in
\Bbbk^{m}$. We are thus reduced to the case where $B$ is the fiber
$F(\xi')$ of the set $F$ for a parameter $\xi'$. For simplicity,
we abbreviate $c(x,\xi'),a(x,\xi'),b(x,\xi')$ and $f_{i}(x,\xi')$
to $c(x),a(x),b(x)$ and $f_{i}(x)$, $i=1,\ldots,s$. Denote by $E
\subset K$ the domain of these functions; then $0$ is an
accumulation point of $E$.

\vspace{1ex}

In the statement of Theorem~\ref{clo-th}, we may equivalently
replace $R$ with the projective line $\mathbb{P}^{1}(K)$, because
the latter is the union of two open and closed charts biregular to
$R$. By Proposition~\ref{limit-th}, we can thus assume that the
limits, say $c(0), a(0),b(0)$,$f_{i}(0)$ of $c(x),a(x),b(x)$,
$f_{i}(x)$ ($i=1,\ldots,s$) when $x \rightarrow 0\,$ exist in
$\mathbb{P}^{1}(K)$ and, moreover, there is a neighbourhood $U$ of
$0$ such that, each definable set
$$ \{ (v(x), v(f_{i}(x))): \; x \in (E \cap U) \setminus \{0 \} \}
   \subset \Gamma \times (\Gamma \cup \ \{
   \infty \}), \ i=1,\ldots,s,  $$
is contained in an affine line with rational slope
\begin{equation}\label{affine}
l = \frac{p_{i}}{q} \cdot k + \beta_{i}, \ i=1,\ldots,s,
\end{equation}
with $p_{i},q \in \mathbb{Z}$, $q>0$, $\beta_{i} \in \Gamma$, or
in\/ $\Gamma \times \{ \infty \}$.

\vspace{1ex}

Performing a linear fractional transformation of the coordinate
$y$, we get
$$ c(0), a(0),b(0) \in K. $$
The role of the center $c(x)$ is immaterial. We can assume,
without loss of generality, that it vanishes, $c(x) \equiv 0$, for
if a point $b = (0,w) \in K^{2}$ lies in the closure of the cell
with zero center, the point $(0, w + c(0))$ lies in the closure of
the cell with center $c(x)$.

\vspace{1ex}

When $\lhd_{1}$ occurs and $a(0)= 0$, the set $F(\xi')$ is itself
an $x$-fiber shrinking at $(0,0)$ and the point $b=(0,0)$ is an
accumulation point of $B$ lying over $a=0$, as desired.

\vspace{1ex}

So suppose that either only $\lhd_{2}$ occurs or $\lhd_{1}$ occurs
and $a(0) \neq 0$. By elimination of $K$-quantifiers, the set
$v(E)$ is a definable subset of $\Gamma$. The value group $\Gamma$
admits quantifier elimination in the language of ordered groups
augmented by symbols $\equiv_{n}$ for congruences modulo $n$, $n
\in \mathbb{N}$, $n > 1$ (cf.\ Section~2). Therefore the set
$v(E)$ is of the form
\begin{equation}\label{vE}
v(E) = \{ k \in (\alpha, \infty) \cap \Gamma: \ m_{j} k \equiv_{N}
   \gamma_{j}, \ j=1,\ldots,t \},
\end{equation}
where $\alpha, \gamma_{j} \in \Gamma$, $m_{j} \in \mathbb{N}$ for
$j=1,\ldots,t$.

\vspace{1ex}

Now, take an element $(u,w) \in F(\xi')$ with $u \in (E \cap U)
\setminus \{ 0 \}$. By equality~\ref{vE}, there is a point $x_{r}
\in E$, $r \in \mathbb{N}$, with
$$ v(u_{r}) = v(u) + rq M N. $$
By equality~\ref{affine}, we get
$$ v(f_{i}(u_{r})) = v(f_{i}(u)) + rp_{i} M N, \ \ i=1,\ldots,s. $$
Hence
\begin{equation}\label{vf}
   v\left( f_{i}(u_{r})w^{k_{i}} \right) = v(f_{i}(u_{r})) + k_{i}
   v(w) =
\end{equation}
$$  = v(f_{i}(u)) + rp_{i} M N + k_{i} v(w) =
    v\left( f_{i}(u)w^{k_{i}} \right) + rp_{i} M N  \equiv_{M} 0.
$$
Of course, after shrinking the neighbourhood $U$, we may assume
that $v(a(x)) = v(a(0)) < \infty$ for all $x \in (E \cap U)
\setminus \{ 0 \}$. Consequently,
$$ v(a(u_{r})) \lhd_{1} v(w^{\nu}) \lhd_{2} v(b(u_{r})). $$
Hence and by \ref{vf}, we get $(u_{r},w) \in F(\xi')$. Since
$u_{r}$ tends to $0 \in K$ when $r \rightarrow \infty$, the point
$(0,w)$ is an accumulation point of $F(\xi')$ lying over $0 \in
K$, which completes the proof of the closedness theorem.
 \hspace*{\fill} $\Box$


\section{Curve selection}
We now pass to curve selection over non-locally compact ground
fields under study. While the real version of curve selection goes
back to the papers~\cite{B-Car,Wal} (see
also~\cite{Loj,Miln,BCR}), the $p$-adic one was achieved in the
papers~\cite{S-Dries,De-Dries}. Before proving a general version
for $\mathcal{L}$-definable sets, we give a version for valuative
semialgebraic sets. Our approach relies on resolution of
singularities, which was already suggested by Denef--van den
Dries~\cite{De-Dries} in the remark after Theorem~3.34.

\vspace{1ex}

By a valuative semialgebraic subset of $K^{n}$ we mean a (finite)
Boolean combination of elementary valuative semialgebraic subsets,
i.e.\ sets of the form
$$ \{ x \in K^{n}: \ v(f(x)) \leq v(g(x)) \}, $$
where $f$ and $g$ are regular functions on $K^{n}$. We call a map
$\varphi$ semialgebraic if its graph is a valuative semialgebraic
set.


\begin{proposition}\label{CSL}
Let $A$ be a valuative semialgebraic subset of $K^{n}$. If a point
$a \in K^{n}$ lies in the closure (in the $K$-topology) of $A
\setminus \{ a \}$, then there is a semialgebraic map $\varphi : R
\longrightarrow K^{n}$ given by restricted power series such that
$$ \varphi(0)=a \ \ \ {\text and} \ \ \  \varphi( R
   \setminus \{ 0 \}) \subset A \setminus \{ a \}. $$
\end{proposition}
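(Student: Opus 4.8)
The plan is to reduce curve selection for valuative semialgebraic sets to a situation where $A$ is, locally near $a$, an open subset of a normal crossing divisor complement, via resolution of singularities, and then to select the curve by a simple monomial parametrization. The remark attributed to Denef--van den Dries is the guiding idea: the combination of resolution with the closedness theorem should yield the curve. Throughout I may assume $a=0$, and I shall work with the completion $L$ of $\overline{K}$ and the Puiseux-type machinery from Section~5 only if needed; the cleaner route is purely geometric.

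First I would write $A$ as a Boolean combination of elementary sets $\{v(f)\le v(g)\}$ with $f,g$ regular on $K^n$. The collection of all the polynomials $f,g$ appearing, together with the coordinate functions, generates an ideal sheaf (or a finite family of hypersurfaces) on $\matA^n$. By resolution of singularities and transformation to a normal crossing (invoked exactly as the introduction promises, using \cite[Chap.~III]{Kol}), there is a proper birational morphism $\sigma:Y\to K\matA^n$, with $Y$ smooth, such that the total transform of this family becomes a normal crossing divisor and $\sigma$ is an isomorphism away from the relevant loci. The point is that after pulling back by $\sigma$, each condition $v(f\circ\sigma)\le v(g\circ\sigma)$ becomes, in suitable local coordinates $(t_1,\dots,t_n)$ on $Y$, a comparison of \emph{monomial} valuations $\sum a_j v(t_j)\le\sum b_j v(t_j)$ plus a condition on units, hence $\sigma^{-1}(A)$ is locally defined by conditions that are linear in the $v(t_j)$.

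Next I would use that $0$ lies in the closure of $A\setminus\{0\}$ to locate, via the closedness theorem (Corollary~\ref{clo-th-cor-2} on blow-ups, or directly Theorem~\ref{clo-th}), a $K$-point $\tilde a\in Y(K)$ in the fiber $\sigma^{-1}(0)$ lying in the closure of $\sigma^{-1}(A\setminus\{0\})$: since $\sigma$ restricted to $Y(K)$ is definably closed, the preimage of the closure is the closure of the preimage, so such a lift $\tilde a$ exists. Working in local coordinates at $\tilde a$ on the smooth variety $Y$, the set $\sigma^{-1}(A)$ near $\tilde a$ contains points arbitrarily close to $\tilde a$, and by the normal-crossing/monomial description its valuation image $\{(v(t_1),\dots,v(t_n))\}$ is a semilinear subset of $\Gamma^n$ having $(\infty,\dots,\infty)$ (shifted by $v(\tilde a)$) as an accumulation point. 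By the same semilinear analysis used in the proof of fiber shrinking (Proposition~\ref{FS}), I can find an affine semi-line inside this valuation image, which I realize by an explicit monomial curve $t_j=\tilde a_j+c_j\,s^{r_j}$, $r_j\in\matN$, so that $s\mapsto(t_1,\dots,t_n)$ is given by restricted power series and maps $R\setminus\{0\}$ into $\sigma^{-1}(A)$ near $\tilde a$ with $s=0\mapsto\tilde a$. Composing with $\sigma$ gives a semialgebraic map $\varphi=\sigma\circ(\text{monomial curve})$ from $R$ to $K^n$ with $\varphi(0)=0$ and $\varphi(R\setminus\{0\})\subset A\setminus\{0\}$; one must reparametrize $s$ by a further power substitution to absorb the exponent $q$ coming from Puiseux (as in Proposition~\ref{limit-th}) so that the parameter ranges over $R$ itself and $\varphi$ is genuinely given by restricted power series over $K$.

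The main obstacle I expect is twofold. The \emph{first} difficulty is ensuring the lift $\tilde a$ is a genuine $K$-point and that the local branch of $\sigma^{-1}(A)$ through it, which a priori lives over $L=\widehat{\overline K}$, actually descends to $K$; here Lemma~\ref{closed-lemma} (that $K$ is closed in $\overline K$) is the key tool, guaranteeing that a parametrized branch with coefficients in $L\setminus K$ would be disjoint from $(U\cap K)\times K$ and so can be discarded, exactly as in the Section~5 analysis of $\mathcal{L}$-definable functions of one variable. The \emph{second} difficulty is the bookkeeping that turns the comparison of valuations on $Y$ into an honest monomial curve landing in $A$ rather than merely in its closure; this requires verifying that the \emph{strict} inequalities and the units' angular-component conditions defining the elementary pieces are preserved along the chosen semi-line, which is where the congruence structure of $v(E)$ in the closedness-theorem proof is mirrored. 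Once these are handled, assembling $\varphi$ and checking it is semialgebraic (its graph being the image of a valuative semialgebraic set under a regular map, hence valuative semialgebraic) is routine.
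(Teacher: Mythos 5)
Your proposal shares the paper's skeleton---reduction to basic valuative semialgebraic sets, resolution to normal crossings, and the closedness theorem (Corollary~\ref{clo-th-cor-3}) to produce a $K$-point $b \in Y(K) \cap \sigma^{-1}(a)$ in the closure of $B := Y(K)\cap\sigma^{-1}(A\setminus\{a\})$---but it finishes by a genuinely different route. The paper exploits the fact that the pull-backs are normal crossings \emph{ordered by divisibility}, so that near $b$ (in local coordinates with $b=0$) the set $B$ is cut out by conditions of only four types: $v(y^{\alpha})\le v(u(y))$ (which cannot occur, as $b$ is an accumulation point of $B$), $v(u(y))\le v(y^{\alpha})$ (which holds on a whole neighbourhood), $y^{\beta}\neq 0$, and $y^{\gamma}=0$; all of these are preserved under scalar multiplication by $R\setminus\{0\}$, so one simply picks $c\in B$ close to $b$ and sets $\varphi(z)=\sigma(z\cdot c)$. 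Your finish instead runs the semilinear analysis of Proposition~\ref{FS} on the valuation image of $B$ and realizes a semi-line by a monomial curve; this is precisely the argument the paper reserves for the general definable version (Proposition~\ref{GCSL}), and it does work here---indeed more easily than there, since a valuative semialgebraic set carries no angular-component data, so membership in $\sigma^{-1}(A)$ near $b$ is determined by $(v(t_{1}),\dots,v(t_{n}))$ alone and its valuation image is semilinear with no congruence conditions. The paper's scaling trick buys brevity (no $\Gamma$-sort analysis at all); your route buys uniformity with the general case.

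Two caveats. Several of your anticipated difficulties are phantoms: no Puiseux exponent $q$ arises (your monomial curve has positive integer exponents and $K$-coefficients from the outset), Lemma~\ref{closed-lemma} and the field $L$ play no role (the lift $b$ is a $K$-point directly because $\sigma\colon Y(K)\to K^{n}$ is definably closed and surjective on $K$-points, so $\sigma(\overline{B})$ is a closed set containing $A\setminus\{a\}$ and hence $a$), and there are no angular-component or congruence conditions to preserve---those belong to Proposition~\ref{GCSL}, not here. Relatedly, your justification of the lift (``the preimage of the closure is the closure of the preimage'') is not the correct statement; the image-of-the-closure argument just given is what the closedness theorem yields. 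Finally, make sure the base point of your semi-line is chosen far enough out that the curve stays, for all $s\in R$, in the neighbourhood where the units have constant valuation, and observe that $\sigma(y)\neq a$ is itself a valuation condition on monomials (the pull-backs of the coordinates being normal crossings too), which is what gives $\varphi(R\setminus\{0\})\subset A\setminus\{a\}$ rather than merely $\subset A$.
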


\begin{proof} It is easy to check that every valuative semialgebraic set is a
finite union of basic valuative semialgebraic sets, i.e.\ sets of
the form
$$ \{ x \in K^{n}: \ v(f_{1}(x)) \lhd_{1} v(g_{1}(x)), \: \ldots \: ,
   \ v(f_{r}(x)) \lhd_{r} v(g_{r}(x)) \}, $$
where $f_{1},\ldots,f_{r},g_{1},\ldots,g_{r}$ are regular
functions and $\lhd_{1},\ldots,\lhd_{r}$ stand for $\leq$ or $<$.
We may assume, of course, that $A$ is a set of this form and
$a=0$. Take a finite composite
$$ \sigma: Y \longrightarrow K\mathbb{A}^{n} $$
of blow-ups along smooth centers such that the pull-backs of the
coordinates $x_{1},\ldots,x_{n}$ and the pull-backs
$$ f_{1}^{\sigma} := f_{1} \circ \sigma,\, \ldots \, ,f_{r}^{\sigma}:= f_{r} \circ \sigma \ \ \ \text{and}
   \ \ \  g_{1}^{\sigma}:= g_{1} \circ \sigma,\, \ldots \, , g_{r}^{\sigma} :=g_{r} \circ \sigma $$
are normal crossing divisors ordered with respect to divisibility
relation, unless they vanish. Since the restriction $\sigma: Y(K)
\longrightarrow K^{n}$ is definably closed
(Corollary~\ref{clo-th-cor-3}), there is a point $b \in Y(K) \cap
\sigma^{-1}(a)$ which lies in the closure of the set
$$ B := Y(K) \cap \sigma^{-1}(A \setminus \{ a \}). $$
Further, we get
$$ Y(K) \cap \sigma^{-1}(A) = \{ v(f_{1}^{\sigma}(y)) \lhd_{1} v(g_{1}^{\sigma}(y)) \}
   \ \cap \ \ldots
   \ \cap \ \{  v(f_{r}^{\sigma}(y)) \lhd_{r} v(g_{r}^{\sigma}(y)) \}, $$
and thus $\sigma^{-1}(A)$ is in suitable local coordinates
$y=(y_{1},\ldots,y_{n})$ near $b=0$ a finite intersection of sets
of the form
$$ \{ v(y^{\alpha}) \leq v(u(y)) \}, \  \{ v(u(y)) \leq v(y^{\alpha}) \},
   \ \{ v(y^{\beta}) < \infty \} \  \text{or} \  \{ \infty =
   v(y^{\gamma}) \}, $$
where $\alpha,\beta,\gamma \in \mathbb{N}^{n}$ and $u(y)$ is a
regular, nowhere vanishing function.

The first case cannot occur because $b=0$ lies in the closure of
$B$; the second case holds in a neighbourhood of $b$; the third
and fourth cases are equivalent to $y^{\beta} \neq 0$ and
$y^{\gamma} =0$, respectively. Consequently, since the pull-backs
of the coordinates $x_{1},\ldots,x_{n}$ are monomial divisors too,
$B$ contains the set $(R \setminus \{ 0 \}) \cdot c$ when $c \in
B$ is a point sufficiently close to $b=0$. Then the map
$$ \varphi: R \longrightarrow K^{n}, \ \ \ \varphi(z) = \sigma (z
   \cdot c) $$
has the desired properties.
\end{proof}

We now pass to the general version of curve selection for
$\mathcal{L}$-definable sets.

\begin{proposition}\label{GCSL}
Let $A$ be an $\mathcal{L}$-definable set subset of $K^{n}$. If a
point $a \in K^{n}$ lies in the closure (in the $K$-topology) of
$A \setminus \{ a \}$, then there exist a semialgebraic map
$\varphi : R \longrightarrow K^{n}$ given by restricted power
series and an $\mathcal{L}$-definable subset $E$ of $R$ with
accumulation point $0$ such that
$$ \varphi(0)=a \ \ \ {\text and} \ \ \  \varphi( E
   \setminus \{ 0 \}) \subset A \setminus \{ a \}. $$
\end{proposition}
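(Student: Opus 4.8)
The plan is to reduce the general $\mathcal{L}$-definable case to the valuative semialgebraic case already handled in Proposition~\ref{CSL}, using fiber shrinking (Proposition~\ref{FS}) together with quantifier elimination. First I would reduce to $a=0\notin A$ and, after a permutation of coordinates, invoke Proposition~\ref{FS} to obtain an $\mathcal{L}$-definable $x_1$-fiber shrinking
$$ \Phi = \bigcup_{t \in E} \{ t \} \times \Phi_t \subset A, \qquad \lim_{t \to 0} \Phi_t = 0, $$
where $E \subset K$ is the projection of $A$ onto the $x_1$-axis and $0$ is an accumulation point of $E$. Since $\Phi_t \to 0$, any semialgebraic curve landing inside the base $E$ and approaching $0$ can be lifted to a curve inside $\Phi \subset A$, provided I can make a definable (indeed semialgebraic) choice of one point in each nonempty fiber $\Phi_t$.

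The key intermediate step is to produce such a selection on the one-dimensional base. By elimination of $K$-quantifiers, $v(E)$ is a definable, hence (by $\Gamma$-quantifier elimination in the Presburger-type language) a finite union of congruence-restricted intervals in $\Gamma$; in particular $v(E)$ contains arbitrarily large values and I may pick an $\mathcal{L}$-definable unbounded subset of $E$ accumulating at $0$. On this base I would apply Corollary~\ref{def-fun} and Proposition~\ref{limit-th} to the coordinate-selecting functions: each component of a definable section $t \mapsto (\psi_2(t),\ldots,\psi_n(t)) \in \Phi_t$ satisfies an algebraic equation $P_i(t,\psi_i(t))=0$ and has a Puiseux-type description, so after a power substitution $t = z^q$ the section is given by restricted power series and tends to $0$. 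Composing with the semialgebraic curve selecting $E$ itself — obtained by applying Proposition~\ref{CSL} to a basic valuative set cut out from the graph of $v$ on $E$, or directly from the Puiseux parametrization — yields a semialgebraic map $\varphi: R \to K^n$ with $\varphi(0)=0$ and $\varphi(E'\setminus\{0\}) \subset A$ for a suitable $\mathcal{L}$-definable $E' \subset R$ accumulating at $0$.

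I expect the main obstacle to be the selection of one point from each fiber $\Phi_t$ in a way that is simultaneously \emph{semialgebraic} (so that $\varphi$ is given by restricted power series, as required) rather than merely $\mathcal{L}$-definable. The fiber shrinking only guarantees $\Phi_t$ is definable and nonempty with $\Phi_t \to 0$; a general definable selection need not be semialgebraic. The resolution is to exploit that the conclusion is allowed to have an $\mathcal{L}$-definable domain $E \setminus \{0\}$ for $\varphi$: I do not need $\varphi$ to cover $A$, only to land in $A$ along a definable set accumulating at $0$. Thus I would use Proposition~\ref{limit-th} to control the valuation data $(v(t),v(\psi_i(t)))$ along affine lines of rational slope, perform the uniformizing substitution $t=z^q$ turning each branch into a restricted power series, and define $\varphi$ from these Puiseux branches; the set $E$ in the statement absorbs the congruence conditions coming from \eqref{vE}-type descriptions of $v(E)$. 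Checking that the resulting $\varphi$ is genuinely semialgebraic (its graph a valuative semialgebraic set) and that $\varphi(E \setminus\{0\})$ stays in $A\setminus\{a\}$ is the routine but delicate verification that completes the argument.
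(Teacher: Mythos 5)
Your reduction hinges on a step that the paper's framework does not support: the existence of an $\mathcal{L}$-definable section $t \mapsto (\psi_2(t),\ldots,\psi_n(t)) \in \Phi_t$ of the fiber-shrinking family. Proposition~\ref{FS} only guarantees that the fibers $\Phi_t$ are nonempty, definable, and converge to $0$; it does not select points from them. Definable choice (Skolem functions) is available in the Presburger $\Gamma$-sort — and that is the only place the paper ever invokes it, in Section~4 — but it fails in general in the $K$-sort of a Henselian valued field: there is, for instance, no definable way to pick one element from the definable family of sets $\{y : 2v(y)=v(t)\}$. Corollary~\ref{def-fun} and Proposition~\ref{limit-th} take a definable function as \emph{input}; they cannot manufacture the section. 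So the obstacle you correctly identify (semialgebraicity of the selection) is actually the lesser half of the problem; your proposed resolution via Puiseux uniformization would convert an already-given definable section into analytic branches, but no section is given. A secondary issue is that even with a section in hand, the Puiseux branches $\phi_j$ of Section~5 have coefficients in $L$, the completion of $\overline{K}$, so it is not automatic that the resulting arc has valuative semialgebraic graph over $K$.

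The paper's proof avoids selection entirely, and this is the idea your proposal is missing. After using $K$-quantifier elimination to write $A\setminus\{a\}$ by conditions $(v(f_1),\ldots,v(f_r))\in P$, $(\overline{ac}\,g_1,\ldots,\overline{ac}\,g_s)\in Q$ on polynomials, one takes a finite composite of blow-ups $\sigma: Y \to K\mathbb{A}^{n}$ making all $f_i^{\sigma}, g_j^{\sigma}$ normal crossings, and uses the closedness theorem (Corollary~\ref{clo-th-cor-3}) to find a point $b \in \sigma^{-1}(a)$ in the closure of $B := Y(K)\cap\sigma^{-1}(A\setminus\{a\})$. In local coordinates at $b$ the set $B$ is described purely by conditions on $(v(y_1),\ldots,v(y_n))$ and $(\overline{ac}\,y_1,\ldots,\overline{ac}\,y_n)$; the semi-line analysis from the proof of Proposition~\ref{FS} (used as a technique, not as a black box) then lets one write down an \emph{explicit monomial arc} $\psi(t)=(w_1t^{r_1},\ldots,w_nt^{r_n})$ with $w_i \in K$, $r_i \in \mathbb{N}$, and a definable $E \subset R$ cut out by congruences on $v(t)$ and $\overline{ac}\,t=1$, so that $\psi(E\setminus\{0\})\subset B$ (the degenerate part of $B$ lying in coordinate hyperplanes is handled by induction on $n$). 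Since $\psi$ is monomial with coefficients in $K$, the map $\varphi = \sigma\circ\psi$ is manifestly semialgebraic and given by restricted power series — no choice function and no Puiseux coefficients outside $K$ ever enter.
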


\begin{proof}
We proceed with induction with respect to the dimension of the
ambient space $n$. The case $n=1$ being evident, suppose $n>1$. By
elimination of $K$-quantifiers, similarly as in Section~2, the set
$A \setminus \{ a \}$ is a finite union of sets defined by
conditions of the form
$$ (v(f_{1}(x)),\ldots,v(f_{r}(x))) \in P, \ \ (\overline{ac}\, g_{1}(x),\ldots,\overline{ac}\, g_{s}(x)) \in Q,
$$
where $f_{i},g_{j} \in K[x]$ are polynomials, and $P$ and $Q$
are definable subsets of $\Gamma^{r}$ and $\Bbbk^{s}$,
respectively (since $x=0$ iff $\overline{ac}\, x =0$). Thus we may
assume that $A$ is such a set and, of course, that $a=0$.

\vspace{1ex}

Again, take a finite composite
$$ \sigma: Y \longrightarrow K\mathbb{A}^{n} $$
of blow-ups along smooth centers such that the pull-backs
$$ f_{1}^{\sigma},\ldots, f_{r}^{\sigma} \ \ \ \text{and}
   \ \ \  g_{1}^{\sigma},\ldots, g_{r}^{\sigma} $$
are normal crossing divisors unless they vanish. Since the
restriction $\sigma: Y(K) \longrightarrow K^{n}$ is definably
closed (Corollary~\ref{clo-th-cor-3}), there is a point $b \in
Y(K) \cap \sigma^{-1}(a)$ which lies in the closure of the set
$$ B := Y(K) \cap \sigma^{-1}(A \setminus \{ a \}). $$
Take local coordinates $y_{1}.\ldots,y_{n}$ near $b$ in which
$b=0$ and every pull-back above is a normal crossing. We shall
first select a semialgebraic map $\psi : R \longrightarrow Y(K)$
given by restricted power series and an $\mathcal{L}$-definable
subset $E$ of $R$ with accumulation point $0$ such that
$$ \psi(0)=b \ \ \ {\text and} \ \ \  \psi( E
   \setminus \{ 0 \}) \subset B. $$

\vspace{1ex}

Since the valuation map and the angular component map composed
with a continuous function are locally constant near any point at
which this function does not vanish, the conditions which describe
the set $B$ near $b$ are of the form
$$ (v(y_{1}),\ldots,v(y_{n})) \in \widetilde{P}, \ \ (\overline{ac}\, y_{1},\ldots,\overline{ac}\, y_{n}) \in
  \widetilde{Q}, $$
where $\widetilde{P}$ and $\widetilde{Q}$ are definable subsets of
$\Gamma^{n}$ and $\Bbbk^{n}$, respectively.

\vspace{1ex}

The set $B_{0}$ determined by the conditions
$$ (v(y_{1}),\ldots,v(y_{n})) \in \widetilde{P}, $$
$$ (\overline{ac}\, y_{1},\ldots,\overline{ac}\, y_{n}) \in \widetilde{Q} \, \cap
   \, \bigcup_{i=1}^{n} \, \{ \xi_{i}=0 \}, $$
is contained near $b$ in the union of hyperplanes $\{ y_{i}=0 \}$,
$i=1,\ldots,n$. If $b$ is an accumulation point of the set
$B_{0}$, then the desired map $\psi$ exists by the induction
hypothesis. Otherwise $b$ is an accumulation point of the set
$B_{1} := B \setminus B_{0}$.

\vspace{1ex}

Analysis from the proof of Proposition~\ref{FS} (fiber shrinking)
shows that the congruences describing the definable subset
$\widetilde{P}$ of $\Gamma^{n}$ are not an essential obstacle to
finding the desired map $\psi$, but affect only the definable
subset $E$ of $R$. Neither are the conditions
$$ \widetilde{Q} \, \setminus \, \bigcup_{i=1}^{n} \, \{ \xi_{i}=0 \}$$
imposed on the angular components of the coordinates
$y_{1},\ldots,y_{n}$, because then none of them vanishes.
Therefore, in order to select the map $\psi$, we must first of all
analyze the linear conditions (equalities and inequalities)
describing the set $\widetilde{P}$.

\vspace{1ex}

The set $\widetilde{P}$ has an accumulation point
$(\infty,\ldots,\infty)$ as $b=0$ is an accumulation point of $B$.
We see, similarly as in the proof of Proposition~\ref{FS} (fiber
shrinking), that $\widetilde{P}$ contains a definable subset of a
semi-line
$$ L := \left\{ \left( r_{1} \cdot k + \gamma_{1}, \ldots,
   r_{n} \cdot k + \gamma_{n} \right) :
   \; k \in \Gamma, \, k \geq 0 \right\}, $$
where $r_{1},\ldots,r_{n}$ are positive integers, passing through
a point
$$ \gamma_{1},\ldots,\gamma_{n} \in \widetilde{P} \subset
   \Gamma^{n}; $$
clearly, $(\infty,\ldots,\infty)$ is an accumulation point of that
definable subset of $L$.

\vspace{1ex}

Now, take some elements
$$ (\xi_{1},\ldots,\xi_{n}) \in \widetilde{Q} \, \setminus
   \, \bigcup_{i=1}^{n} \, \{ \xi_{i}=0 \} $$
and next some elements $w_{1},\dots,w_{n} \in K$ for which
$$ v(w_{1})=\gamma_{1},\ldots,v(w_{n})=\gamma_{n} \ \ \ \text{and}
   \ \ \ \overline{ac}\, w_{1} = \xi_{1}, \ldots, \overline{ac}\, w_{n}
   = \xi_{n}. $$
There exists an $\mathcal{L}$-definable subset $E$ of $R$ which is
determined by some congruences imposed on $v(t)$ (as in the proof
of Proposition~\ref{FS}) and the conditions $\overline{ac}\, t =1$
such that the subset
$$ F := \left\{ \left( w_{1} \cdot t^{r_{1}}, \ldots, w_{n} \cdot t^{r_{n}} \right):
   \; t \in E \right\} $$
of the arc
$$ \psi: R \to Y, \ \ \psi(t) = \left( w_{1} \cdot t^{r_{1}}, \ldots, w_{n} \cdot t^{r_{n}}
   \right) $$
is contained in $B_{1}$. Then $\varphi := \sigma \circ \psi$ is
the map we are looking for. This completes the proof.
\end{proof}


\section{\L{}ojasiewicz inequality}
In this section we provide certain general versions of the
\L{}ojasiewicz inequality. For the classical version over the real
ground field, we refer the reader to~\cite[Thm.2.6.6]{BCR}.

\begin{proposition}\label{L-1}
Let $f,g: A \to K$ be two continuous $\mathcal{L}$-definable
functions on a closed (in the $K$-topology)
$\mathcal{L}$-definable subset $A$ of $R^{m}$. If
$$ \{ x \in A: g(x)=0 \} \subset \{ x \in A: f(x)=0 \}, $$
then there exist a positive integer $s$ and a continuous
$\mathcal{L}$-definable function $h$ on $A$ such that $f^{s}(x) =
h(x) \cdot g(x)$ for all $x \in A$.
\end{proposition}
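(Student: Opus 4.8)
The plan is to produce the quotient $h = f^{s}/g$ pointwise and show that for a suitable exponent $s$ it extends continuously across the zero set of $g$. First I would set $h(x) := f(x)^{s}/g(x)$ on the open definable set $\{g \neq 0\}$; this is automatically continuous and $\mathcal{L}$-definable there, and the identity $f^{s} = h \cdot g$ holds on $\{g \neq 0\}$ for every $s$. The real content is to choose $s$ so that $h$ extends continuously by $0$ onto $Z := \{x \in A : g(x)=0\}$, which by hypothesis is contained in $\{f=0\}$; once that is done, the identity $f^{s} = h\cdot g$ holds on all of $A$ (both sides vanish on $Z$), and $h$ is continuous and $\mathcal{L}$-definable on $A$ because continuity is a first-order $K$-topological property.

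To control $h$ near $Z$, the key is a valuative comparison between $v(f(x))$ and $v(g(x))$ of the form: there are constants $N \in \mathbb{N}$ and $\gamma_{0} \in \Gamma$ with
\begin{equation}\label{L1-key}
 N \cdot v(f(x)) \geq v(g(x)) + \gamma_{0} \quad \text{for all } x \in A \text{ with } g(x) \neq 0
\end{equation}
(so that $v(f(x)^{N}/g(x)) \geq \gamma_{0}$, forcing $f^{N}/g \to 0$ along any sequence approaching $Z$). To establish \eqref{L1-key} I would argue by contradiction: consider the $\mathcal{L}$-definable function $x \mapsto (v(f(x)), v(g(x)))$ into $\Gamma \times (\Gamma \cup \{\infty\})$ and suppose no such $N, \gamma_{0}$ work. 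Then the definable set where the ratio $v(g)/v(f)$ is unbounded accumulates at some point of $Z$; using the general curve selection of Proposition~\ref{GCSL}, I would select a semialgebraic arc $\varphi : R \to A$ with $\varphi(0) = a \in Z$ and $\varphi(E \setminus \{0\}) \subset A$ along which $v(g(\varphi(t)))$ grows arbitrarily faster than $v(f(\varphi(t)))$. Pulling $f \circ \varphi$ and $g \circ \varphi$ back to functions of the single variable $t$ and invoking the existence-of-the-limit theorem (Proposition~\ref{limit-th}), both $v(f(\varphi(t)))$ and $v(g(\varphi(t)))$ become affine-linear with rational slopes in $v(t)$ near $t=0$. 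Comparing these two slopes yields a uniform bound $N$ on the ratio, contradicting the assumed unboundedness; the inclusion $Z \subset \{f=0\}$ guarantees $v(f)\to\infty$ wherever $v(g)\to\infty$, so the slopes are genuinely comparable and the estimate \eqref{L1-key} follows on the whole of $A$ (after covering $Z$ by finitely many such local analyses, using that $A$ is closed and the problem is definable).

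With \eqref{L1-key} in hand, taking $s := N$ gives $v(h(x)) = v(f(x)^{N}) - v(g(x)) \geq \gamma_{0}$ for all $x$ with $g(x)\neq 0$; hence along any sequence $x_{k} \to a \in Z$ one has $v(f(x_{k})) \to \infty$ (as $a \in \{f=0\}$ and $f$ is continuous), so $v(h(x_{k})) = N v(f(x_{k})) - v(g(x_{k})) \geq \gamma_{0}$ together with $v(g(x_{k})) \to \infty$ forces $v(h(x_{k}))\to\infty$, i.e. $h(x_{k}) \to 0$. Thus $h$ extends continuously by $0$ across $Z$, and the resulting function on $A$ is $\mathcal{L}$-definable and satisfies $f^{N} = h \cdot g$ everywhere, as required. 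The main obstacle I expect is the passage from a single selected arc to a \emph{uniform} exponent $N$ valid on all of $A$: one must rule out the possibility that the optimal slope ratio degrades as the accumulation point varies over $Z$. I would handle this by noting that the bad set --- the definable set of pairs $(x, n)$ for which the estimate fails with exponent $n$ --- is $\mathcal{L}$-definable, and that failure of a uniform bound would, by definable choice together with curve selection, again be witnessed along an arc, contradicting the local slope comparison; definability throughout is what converts the finitely many local estimates into the single global inequality.
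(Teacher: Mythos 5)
There is a genuine gap, and it sits at the heart of your argument. Your contradiction step requires that failure of the estimate $N\,v(f(x)) \geq v(g(x)) + \gamma_{0}$ be witnessed by bad points that \emph{accumulate at some point of} $Z$, so that Proposition~\ref{GCSL} can produce an arc through that point. But over the fields considered here $R^{m}$ is not compact: a sequence $x_{k} \in A$ with $v(g(x_{k})) - N\,v(f(x_{k})) \to \infty$ (or even with $g(x_{k}) \to 0$) need have no convergent subsequence at all, so no accumulation point of the bad set is available and curve selection cannot get started. This compactness-style inference is exactly what breaks down over non--locally compact valued fields, and avoiding it is the whole point of the paper's argument: there one applies Corollary~\ref{clo-th-cor-0} to each level set $A_{\gamma} = \{x \in A : v(f(x)) = \gamma\}$ --- which is closed, $\mathcal{L}$-definable, and disjoint from $\{g=0\}$ by the hypothesis --- to conclude that $g(A_{\gamma})$ is a closed subset of $K$ not containing $0$, whence $v(g) < \alpha(\gamma)$ on $A_{\gamma}$; then quantifier elimination in the $\Gamma$-sort shows that $\Lambda = \{(v(f(x)), v(g(x))) : x \in A\} \subset \Gamma^{2}$ is described by finitely many linear inequalities and congruences, and such a semi-linear set admitting no vertical asymptote must lie below a line $\delta < s\gamma$ for $\gamma$ large. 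Neither curve selection nor the one-variable limit theorem (Proposition~\ref{limit-th}) enters; the closedness theorem is the substitute for compactness, not an afterthought. Note also that your assertion that ``$v(f)\to\infty$ wherever $v(g)\to\infty$'' is not a consequence of the inclusion $Z \subset \{f=0\}$ alone --- along a non-convergent sequence it is again something one can only prove by the level-set/closedness argument, so invoking it inside your contradiction is close to assuming what is to be shown.

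Two further problems, one structural and one cosmetic. Structurally, your uniformity patch via ``the definable set of pairs $(x,n)$ for which the estimate fails with exponent $n$'' is not available in $\mathcal{L}$: the exponent $n$ ranges over $\mathbb{N}$, which is not a sort of the language, and $n \cdot v(f(x))$ with $n$ a \emph{variable} is not a term, since multiplication is absent from the Presburger-type language of the value group; so this set is not $\mathcal{L}$-definable and definable choice cannot be applied to it. Cosmetically, even granting your key estimate, the final inference fails as stated: $v(h) = N\,v(f) - v(g) \geq \gamma_{0}$ together with $v(g) \to \infty$ does \emph{not} force $v(h) \to \infty$ (take $v(g) = N\,v(f) - \gamma_{0}$ identically); one must raise the exponent by one and observe that $v(f^{N+1}/g) \geq v(f) + \gamma_{0} \to \infty$ as $x$ approaches $Z$, by continuity of $f$ and $f|_{Z}=0$. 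This last point is easily repaired (and the paper's own final line deserves the same adjustment of $s$ to $s+1$), but the missing accumulation point and the non-definability of your pair set are substantive and would need the paper's level-set argument to fix.
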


\begin{proof}
It is easy to check that the set
$$ A_{\gamma} := \{ x \in A: \; v(f(x))=\gamma \} $$
is a closed $\mathcal{L}$-definable subset of $A$ for every
$\gamma \in \Gamma$. Hence and by Corollary~\ref{clo-th-cor-0} to
the closedness theorem, the set $g(A_{\gamma})$ is a closed
$\mathcal{L}$-definable subset of $K \setminus \{ 0 \}$, $\gamma
\in \Gamma$. The set $v(g(A_{\gamma}))$ is thus bounded from
above, i.e.\
$$ v(g(A_{\gamma})) < \alpha (\gamma) $$
for some $\alpha(\gamma) \in \Gamma$. By elimination of
$K$-quantifiers, the set
$$ \Lambda := \{ (v(f(x)),v(g(x))) \in \Gamma^{2}: \; x \in A \}
   \subset \{ (\gamma,\delta) \in \Gamma^{2}: \; \delta < \alpha(\gamma) \} $$
is a definable subset of $\Gamma^{2}$, and thus it is described by
a finite number of linear inequalities and congruences. Hence
$$ \Lambda \, \cap  \{ (\gamma,\delta) \in \Gamma^{2}: \; \gamma > \gamma_{0} \}
   \subset \{ (\gamma,\delta) \in \Gamma^{2}: \; \delta < s \cdot \gamma \} $$
for a positive integer $s$ and some $\gamma_{0} \in \Gamma$. We
thus get
$$ v(g(x)) < s \cdot v(f(x)) \ \ \text{if} \ \ x \in A, \, v(f(x))> \gamma_{0}, $$
whence
$$ v(g(x)) < v(f^{s}(x)) \ \ \text{if} \ \ x \in A, \, v(f(x))> \gamma_{0}. $$
Consequently, the quotient $f^{s}/g$ extends by zero through the
zero set of the denominator to a (unique) continuous
$\mathcal{L}$-definable function on $A$. This finishes the proof.
\end{proof}

The above theorem can be generalized as follows.

\begin{proposition}\label{L-2}
Let $U$ and $F$ be two $\mathcal{L}$-definable subsets of $K^{m}$,
suppose $U$ is open and $F$ closed in the $K$-topology and
consider two continuous $\mathcal{L}$-definable functions  $f,g: A
\to K$ on the locally closed subset $A := U \cap F$ of $K^{m}$. If
$$ \{ x \in A: g(x)=0 \} \subset \{ x \in A: f(x)=0 \}, $$
then there exist a positive integer $s$ and a continuous
$\mathcal{L}$-definable function $h$ on $A$ such that $f^{s}(x) =
h(x) \cdot g(x)$ for all $x \in A$.
\end{proposition}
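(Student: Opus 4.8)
The plan is to reduce Proposition~\ref{L-2} to the already-established Proposition~\ref{L-1} by a local-to-global argument that exploits the local compactness substitute provided by the closedness theorem. The essential difficulty is that in Proposition~\ref{L-1} the domain $A$ was a closed subset of the compact-like polydisc $R^m$, whereas here $A=U\cap F$ is only locally closed; the set $F$ need not be bounded and $U$ introduces a genuine openness that prevents a direct appeal to closedness. So the hard part will be controlling the behaviour of $f^s/g$ near the boundary $\partial A = \overline{A}\setminus A$, where the quotient could a priori blow up even though it is well-defined on $A$ itself.

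First I would reproduce verbatim the $\Gamma$-sort computation from the proof of Proposition~\ref{L-1}: by elimination of $K$-quantifiers the set
$$ \Lambda := \{ (v(f(x)),v(g(x))) \in \Gamma^{2}: \; x \in A \} $$
is a definable subset of $\Gamma^2$, hence semilinear, and the inclusion $\{g=0\}\cap A\subset\{f=0\}\cap A$ forces that on the region $v(f(x))>\gamma_0$ we have $v(g(x))<s\cdot v(f(x))$ for a suitable integer $s$ and threshold $\gamma_0\in\Gamma$. This step goes through unchanged, because it uses only definability in the value-group sort and the set-theoretic containment of zero sets, neither of which cares whether $A$ is closed. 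Its conclusion is that $v(g(x))<v(f^s(x))$ whenever $x\in A$ and $v(f(x))>\gamma_0$, so the quotient $h:=f^s/g$ is bounded (in fact tends to $0$) along every approach on which $f\to 0$.

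The remaining task is to promote this into a globally continuous $\mathcal{L}$-definable function on $A$, i.e.\ to show $h$ extends continuously across the locus $\{g=0\}$. On the open locus $\{g\neq 0\}$ the function $h=f^s/g$ is manifestly continuous and definable. Near a point $a\in A$ with $g(a)=0$ we have $f(a)=0$ too, so $v(f)\to\infty$ as we approach $a$; once inside the region $v(f)>\gamma_0$ the inequality above gives $v(h(x))=v(f^s(x))-v(g(x))>0$, forcing $h(x)\to 0=h(a)$. Thus $h$ extends by zero across the zero set of $g$ and is continuous on all of $A$. The only subtle point — and this is where I would invoke the machinery of the paper rather than a routine estimate — is verifying that the threshold region $v(f)>\gamma_0$ genuinely surrounds each zero $a$ of $g$; this is exactly the kind of definable boundary-behaviour statement that curve selection (Proposition~\ref{GCSL}) certifies, since any sequence in $A$ converging to $a$ can be replaced by a definable arc along which the value-group estimate applies uniformly.

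I expect the main obstacle to be precisely this passage from the pointwise value-group inequality to genuine continuity at boundary points of the locally closed set $A$: unlike in Proposition~\ref{L-1}, one cannot simply say ``$A$ is closed in $R^m$ so the extension by zero is automatic.'' The cleanest route is to observe that the whole argument is local in the $K$-topology and that the inequality $v(g(x))<v(f^s(x))$ on $\{v(f)>\gamma_0\}$ already encodes the decay of $h$ along every admissible approach to $\{f=0\}$; combined with curve selection to rule out pathological non-arc-wise approaches, this yields that $f^s/g$ extends by zero to a continuous $\mathcal{L}$-definable $h$ on $A$ with $f^s=h\cdot g$ everywhere, which is the assertion.
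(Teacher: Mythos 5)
Your proposal fails at its first substantive step. You claim that the value-group computation from Proposition~\ref{L-1} ``goes through unchanged, because it uses only definability in the value-group sort and the set-theoretic containment of zero sets, neither of which cares whether $A$ is closed.'' That misreads where the force of that computation lies. The containment $\{g=0\}\subset\{f=0\}$ concerns only points with $v(g)=\infty$, which do not belong to $\Lambda\subset\Gamma^{2}$ at all, so by itself it places no constraint on $\Lambda$. What actually produces the bound in Proposition~\ref{L-1} is the closedness theorem: since there $A$ is closed in $R^{m}$, each level set $A_{\gamma}=\{x\in A:\,v(f(x))=\gamma\}$ is closed in $R^{m}$, so by Corollary~\ref{clo-th-cor-0} the image $g(A_{\gamma})$ is closed and misses $0$, whence $v(g)<\alpha(\gamma)$ on $A_{\gamma}$; only after this slice-wise bound does Presburger definability of $\Lambda$ upgrade it to the linear bound $\delta<s\cdot\gamma$. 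For a merely locally closed $A$ this input is unavailable, and your asserted global inequality is simply false. Take $m=2$, $U=\{(x,y):y\neq0\}$, $F=R^{2}$, so $A=\{(x,y)\in R^{2}:y\neq0\}$, and $f(x,y)=x$, $g(x,y)=xy$. Then $\{g=0\}\cap A=\{f=0\}\cap A$, yet $\Lambda=\{(\gamma,\delta)\in\Gamma^{2}:0\leq\gamma\leq\delta\}$ has unbounded vertical slices, so no $s$ and $\gamma_{0}$ give $v(g(x))<s\cdot v(f(x))$ on $\{v(f)>\gamma_{0}\}$ (the proposition itself of course holds here, with $s=2$ and $h=x/y$). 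Since your last two paragraphs, including the appeal to curve selection (Proposition~\ref{GCSL}), rest entirely on this inequality, the argument collapses. (A secondary flaw: even granting the inequality, $v(h)>0$ yields only that $|h|$ is bounded by $1$, not that $h\to0$; one must raise the exponent to force vanishing.)

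Repairing exactly this point is the actual content of the paper's proof. There one exhausts $A$ by the sets $X_{\beta}$, $\beta\in\Gamma$, $\beta\geq0$, consisting of the points with $v(x_{1}),\ldots,v(x_{m})\geq-\beta$ and $v(x-y)\leq\beta$ for all $y\in V:=K^{m}\setminus U$; by the ultrametric inequality each $X_{\beta}$ is open and closed in $K^{m}$ and, being bounded, meets $F$ in a set closed in $\mathbb{P}^{m}(K)$. Hence the closedness-theorem step of Proposition~\ref{L-1} does apply on each piece, and running the whole computation with $\beta$ as an additional $\Gamma$-variable, the definable set $\Lambda\subset\Gamma^{3}$ yields a single slope $s$, uniform in $\beta$ by Presburger definability, but a threshold $\gamma_{0}(\beta)$ depending on $\beta$ --- precisely the dependence your global statement suppresses. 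In the counterexample above one has $\gamma_{0}(\beta)=\beta$: on $X_{\beta}$ the constraint $v(y)\leq\beta$ gives $v(g)\leq v(f)+\beta<2\,v(f)$ once $v(f)>\beta$. Continuity of the extension of $f^{s}/g$ by zero is then a purely local matter on the clopen cover by the $X_{\beta}$, and no curve selection is needed.
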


\begin{proof}
We shall adapt the foregoing arguments. Since the set $U$ is open,
its complement $V:= K^{m} \setminus U$ is closed in $K^{m}$ and
$A$ is the following union of open and closed subsets of $K^{m}$
and of $\mathbb{P}^{m}(K)$:
$$ X_{\beta} := \{ x \in K^{m}: \; v(x_{1}),\ldots,v(x_{m}) \geq
   -\beta, $$
$$  v(x-y) \leq \beta \ \ \ \text{for all} \ \ y \in V \}, $$
where $\beta \in \Gamma$, $\beta \geq 0$. As before, we see that
the sets
$$ A_{\beta,\gamma} := \{ x \in X_{\beta}: \; v(f(x))=\gamma \} \ \
   \text{with} \ \beta, \gamma \in \Gamma $$
are closed $\mathcal{L}$-definable subsets of $\mathbb{P}^{m}(K)$,
and next that the sets $g(A_{\beta,\gamma})$ are closed
$\mathcal{L}$-definable subsets of $K \setminus \{ 0 \}$ for all
$\beta,\gamma \in \Gamma$. Likewise, we get
$$ \Lambda := \{ (\beta,v(f(x)),v(g(x))) \in \Gamma^{3}: \; x \in X_{\beta} \}
   \subset $$
$$ \subset \{ (\beta,\gamma,\delta) \in \Gamma^{3}: \; \delta <
   \alpha(\beta,\gamma) \} $$
for some $\alpha(\beta,\gamma) \in\Gamma$. Consequently, since
$\Lambda$ is a definable subset of $\Gamma^{3}$, there exist a
positive integer $s$ and elements $\gamma_{0}(\beta) \in \Gamma$
such that
$$ \Lambda \cap \{ (\beta,\gamma,\delta) \in \Gamma^{3}: \;
   \gamma > \gamma_{0}(\beta) \} \subset \{ (\beta,\gamma,\delta) \in \Gamma^{3}:
   \; \delta < s \cdot \gamma \}. $$
Since $A$ is the union of the sets $X_{\beta}$, it is not
difficult to check that the quotient $f^{s}/g$ extends by zero
through the zero set of the denominator to a (unique) continuous
$\mathcal{L}$-definable function on $A$, which is the desired
result.
\end{proof}

\section{Extending continuous hereditarily rational functions}
We first recall an elementary lemma from~\cite[Lemma~15]{K-N}.


\begin{lemma}\label{zero-lemma}
If the ground field $K$ is not algebraically closed, then there
are polynomials $G_{r}(x_{1},\dots, x_{r})$ in any number of variables
whose only zero on $K^r$ is $(0,\dots,0)$. In particular,
$$ G_{2}(x_{1},x_{2}) = x_{1}^{d} + a_{1}x_{1}^{d-1}x_{2} + \cdots
   + a_{d}x_{2}^{d} \, , $$
where
$$ t^{d} + a_{1}t^{d-1} + \cdots +a_{d} \in K[t] $$
is a polynomial with no roots in $K$. \hspace*{\fill} $\Box$
\end{lemma}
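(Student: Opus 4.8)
The plan is to build the multivariate polynomials $G_{r}$ by iterating the two-variable construction, so the crux is to settle the case $r=2$ and then propagate it by composition. First I would invoke the hypothesis that $K$ is not algebraically closed to fix a monic polynomial $p(t) = t^{d} + a_{1}t^{d-1} + \cdots + a_{d} \in K[t]$ with no root in $K$; here $d \geq 2$, since every monic linear polynomial over a field has a root. The polynomial $G_{2}$ displayed in the statement is then nothing but the homogenization $x_{2}^{d}\, p(x_{1}/x_{2})$ of $p$, which exhibits its homogeneous form automatically.

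To verify that the only zero of $G_{2}$ on $K^{2}$ is the origin, I would argue by cases according to whether $x_{2}$ vanishes. If $x_{2}=0$, then $G_{2}(x_{1},0)=x_{1}^{d}$, forcing $x_{1}=0$. If $x_{2}\neq 0$, then $G_{2}(x_{1},x_{2})=x_{2}^{d}\, p(x_{1}/x_{2})$, and its vanishing would make $x_{1}/x_{2} \in K$ a root of $p$, contradicting the choice of $p$. Hence $G_{2}(x_{1},x_{2})=0$ on $K^{2}$ forces $x_{1}=x_{2}=0$.

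For arbitrary $r$ I would proceed by induction on the number of variables, setting $G_{1}(x_{1}) := x_{1}$ and
$$ G_{r+1}(x_{1},\dots,x_{r+1}) := G_{2}\bigl(G_{r}(x_{1},\dots,x_{r}),\, x_{r+1}\bigr). $$
The two-variable case just proved shows that $G_{r+1}=0$ on $K^{r+1}$ holds exactly when $G_{r}(x_{1},\dots,x_{r})=0$ and $x_{r+1}=0$ simultaneously; by the inductive hypothesis the first condition forces $x_{1}=\cdots=x_{r}=0$, so the only zero of $G_{r+1}$ on $K^{r+1}$ is the origin, as required.

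I do not anticipate a genuine obstacle: the construction is elementary, and the only point deserving attention is that the defining property is preserved under the nested substitution, which is immediate from the structure of the composition together with the field property used in the base case. The lemma is standard and is recalled here from \cite[Lemma~15]{K-N}; the sketch above merely reconstructs the argument.
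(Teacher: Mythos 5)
Your proposal is correct: homogenizing a rootless monic polynomial gives $G_{2}$, and the nested composition $G_{r+1}=G_{2}(G_{r},x_{r+1})$ propagates the property, exactly as in the standard argument. The paper itself states this lemma without proof, citing Lemma~15 of the Koll\'ar--Nowak paper, and your reconstruction is essentially that argument, so there is nothing to correct.
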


We keep further the assumption that $K$ is a Henselian rank one
valued field of equicharacteristic zero and, additionally, that it
is not algebraically closed. We have at our disposal the descent
property (Corollary~\ref{clo-th-cor-4}) and the \L{}ojasiewicz
inequality (Proposition~\ref{L-2}). Therefore, by adapting mutatis
mutandis its proof, we are able to carry over Proposition~11
from~\cite{K-N} on extending continuous hereditarily rational
functions (being its main extension result) to the case of such
non-archimedean fields. We first recall the definition. Given a
$K$-variety $Z$, we say that a continuous function $f: Z(K) \to K$
is {\it hereditarily rational} if every irreducible subvariety $Y
\subset Z$ has a Zariski dense open subvariety $Y^{0} \subset Z$
such that $f|_{Y^{0}(K)}$ is regular.


\begin{theorem}\label{ext-th} Let $X$ be a smooth $K$-variety and $W \subset Z\subset
X$ closed subvarieties. Let $f$ be a continuous hereditarily
rational function on $Z(K)$ that is regular at all $K$-points of
$Z(K) \setminus W(K)$. Then $f$ extends to a continuous
hereditarily rational function $F$ on $X(K)$ that is regular at
all $K$-points of $X(K) \setminus W(K)$.
\end{theorem}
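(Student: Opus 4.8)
The plan is to mimic the proof of Proposition~11 in \cite{K-N}, replacing the real/$p$-adic tools by the closedness theorem and its corollaries established above. First I would reduce the extension problem to a local one by resolution of singularities. Since $X$ is smooth and $W \subset Z \subset X$ are closed, I would choose a resolution diagram: a composite $\sigma: X' \to X$ of blow-ups along smooth centers (lying over $W$) such that the strict transform $Z'$ of $Z$ is smooth and the total transform of $W$ is a normal crossing divisor. The point of this reduction is that on the smooth model $Z'$ the pulled-back function $f\circ\sigma$ becomes regular along a dense open set with controllable singularities, so that extension across the exceptional locus can be analyzed monomially in local coordinates, exactly as in the real case.

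Next I would carry out the extension on the resolved variety. The key analytic input is the \L{}ojasiewicz inequality (Proposition~\ref{L-2}): after resolution, the difference between $f$ and a candidate regular (or rational) extension is controlled by a power of a defining function of the exceptional divisor, which forces continuity of the extension across $W(K)$. Concretely, working in local coordinates where the relevant ideals are monomialized, one writes $f$ as a quotient of regular functions whose denominator vanishes only on the normal crossing divisor, and Proposition~\ref{L-2} (applied on the locally closed pieces) guarantees that a suitable power of the numerator is divisible by the denominator with continuous quotient. This produces a continuous $\mathcal{L}$-definable extension $F'$ on $X'(K)$ that is regular off the preimage of $W(K)$ and restricts to $f\circ\sigma$.

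Then I would descend $F'$ from $X'(K)$ back to $X(K)$ using the descent property (Corollary~\ref{clo-th-cor-4}). Here one verifies that $F'$ is constant on the fibers of $\sigma$ over $X(K)$: on the dense locus this is immediate because $F'$ agrees with the pull-back of $f$, and by continuity (again invoking that $\sigma$ restricted to $K$-points is definably closed, Corollary~\ref{clo-th-cor-3}) this propagates to the whole exceptional fiber. The descent property then yields a unique continuous $\mathcal{L}$-definable function $F$ on $X(K)$ with $F\circ\sigma = F'$, and by construction $F|_{Z(K)} = f$ and $F$ is regular at every point of $X(K)\setminus W(K)$. Finally I would check that $F$ is not merely continuous but \emph{hereditarily rational}: for each irreducible subvariety $Y \subset X$ one restricts the whole construction to $Y$ (using that the resolution and the \L{}ojasiewicz estimates behave well under restriction) to produce a dense open $Y^0 \subset Y$ on which $F|_{Y^0(K)}$ is regular.

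The main obstacle I expect is the descent step, specifically verifying that $F'$ is genuinely constant on all fibers of the blow-up, including the higher-dimensional exceptional fibers over $W(K)$, rather than just on the generic fibers. This is where one must combine the definable closedness of $\sigma$ on $K$-points with the \L{}ojasiewicz-type continuity control; over $\mathbb{R}$ or $\mathbb{Q}_p$ one would use local compactness and ordinary limits of curves, but here the totally disconnected, non-locally-compact topology forces reliance on curve selection (Proposition~\ref{GCSL}) to test fiber-constancy along definable arcs. A secondary technical point is ensuring all objects produced stay $\mathcal{L}$-definable throughout — the resolution is algebraic hence definable, but one must confirm the patched quotient functions assemble into a single $\mathcal{L}$-definable function, which follows from the finiteness of the cell/chart decomposition and quantifier elimination.
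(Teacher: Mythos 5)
Your toolkit (blow-ups, the \L{}ojasiewicz inequality, the descent property) is the one the paper advertises, but the actual core of the proof is missing from your plan. The paper's argument (an adaptation of Proposition~11 of \cite{K-N}) hinges on an explicit algebraic construction, not on a local monomial analysis after resolution: with the notation of \cite{K-N}, one takes a rational representative $P/Q$ of $f$ and an auxiliary regular function $H$, and improves $P/Q$ to $G := \frac{P}{Q}\cdot\frac{Q^{d}}{G_{2}(Q,H)}$ and then to $F_{dn} := G\cdot\frac{Q^{dn}}{G_{2}(Q^{n},H)}$, where $G_{2}$ is the anisotropic polynomial of Lemma~\ref{zero-lemma} whose only zero on $K^{2}$ is the origin. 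This is precisely what makes the denominators vanish only where $Q=H=0$, hence makes $F_{dn}$ regular at all $K$-points off $W(K)$ while still restricting to $f$ on $Z\setminus W$; it is the step that replaces the sum-of-squares trick available over $\mathbb{R}$ and $\mathbb{Q}_p$, and your proposal has no substitute for it. Note also that the blow-up used is not a resolution making everything normal crossings: it is the single blow-up $\pi: X_{1}\to X$ of the specific ideal $(PQ^{d-1}, G_{2}(Q,H))$, i.e.\ the closure of the graph of $G$ in $X(K)\times\mathbb{P}^{1}(K)$. Your proposal never produces \emph{any} rational function on $X$ restricting to $f$; writing $f$ ``as a quotient of regular functions with normal-crossing denominator'' upstairs does not address this, since $f$ is only given on $Z(K)$.

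The second gap is structural: you run the descent in the wrong direction. In the paper, rationality is built in from the start --- $F_{dn}$ is a rational function on $X$ --- and the descent property (Corollary~\ref{clo-th-cor-4}) is used only to prove that its restriction to $X(K)$ is \emph{continuous}, by showing that $F_{dn}\circ\pi$ extends continuously to $X_{1}(K)$ and vanishes on the exceptional divisor $E(K)$ (which makes fiber-constancy automatic; no curve selection is needed). You instead build a continuous $\mathcal{L}$-definable function upstairs by local patching and then descend it: descent returns only a continuous $\mathcal{L}$-definable function, and nothing in your argument forces the descended $F$ to be rational, let alone hereditarily rational or regular off $W(K)$; ``restricting the whole construction to $Y$'' cannot recover a rationality that was never established. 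Finally, you never address the boundedness of the correction factors $\frac{Q^{d}}{G_{2}(Q,H)}$, $\frac{Q^{dn}}{G_{2}(Q^{n},H)}$, $\frac{H^{d}}{G_{2}(Q^{n},H)}$: over $\mathbb{R}$ the analogous bound $t^{2}/(t^{2}+a^{2})\leq 1$ is trivial, but over $K$ this is a genuine issue, settled in the paper by Lemma~\ref{aux-lemma} via a splitting-field argument resting on Lemma~\ref{closed-lemma}; the \L{}ojasiewicz inequality (Proposition~\ref{L-2}) is applied to only one factor of $F_{dn}\circ\pi$ and cannot substitute for this boundedness.
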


\begin{remark}
The corresponding theorem for differentiable hereditarily rational
functions remains an open problem as yet
(cf.~Remark~\ref{rem-regulous} and the discussion afterwards).
\end{remark}

{\em Sketch of the Proof.} We shall keep the notation from the
paper~\cite{K-N}. The main modification of the proof in comparison
with that paper is the definition of the functions $G$ and
$F_{2n}$ which improve the rational function $P/Q$. Now we need
the following corrections:
$$ G := \frac{P}{Q} \cdot \frac{Q^{d}}{G_{2}(Q,H)} $$
and
$$ F_{dn} := G \cdot \frac{Q^{dn}}{G_{2}(Q^{n},H)} = \frac{P}{Q}
   \cdot\frac{Q^{d}}{G_{2}(Q,H)} \cdot
   \frac{Q^{dn}}{G_{2}(Q^{n},H)}, $$
where the positive integer $d$ and the polynomial $G_{2}$ are
taken from Lemma~\ref{zero-lemma}. It is clear that
the restriction of $F_{dn}$ to $Z \setminus W$ equals $f_{2}$, and
thus Theorem~\ref{ext-th} will be proven once we show that the
rational function $F_{dn}$ restricts to a continuous function
$\Phi_{dn}$ on $X(K)$ for $n \gg 1$.

\vspace{1ex}

We work on the variety $\pi: X_{1}(K) \longrightarrow X(K)$
obtained by blowing up the ideal $(PQ^{d-1}, G_{2}(Q,H))$; let
$E:= \pi^{-1}(W)$ be its exceptional divisor. Equivalently,
$X_{1}(K)$ is the Zariski closure of the graph of $G$ in $X(K)
\times \mathbb{P}^{1}(K)$. Two open charts are considered:

\vspace{1ex}

$\bullet$ \ a Zariski open neighbourhood $U^{*}$ of the closure
(in the $K$-topology) $Z^{*}$ of $\pi^{-1}(Z(K) \setminus W(K))$;

$\bullet$ \ an open (in the $K$-topology) set $V^{*} := X_{1}(K)
\setminus Z^{*}$, which is an $\mathcal{L}$-definable subset of
$X_{1}(K)$.

\vspace{1ex}

Via the descent property (Corollary~\ref{clo-th-cor-4}), it
suffices to show that the rational function $F_{dn} \circ \pi$
(with $n \gg 1$) extends to a continuous function on $X_{1}(K)$
that vanishes on $E(K)$.

\vspace{1ex}

The subtlest analysis is on the latter chart, on which $F_{dn}
\circ \pi$ can be written in the form
$$ F_{dn} \circ \pi = (P \circ \pi) \cdot \left(
   \frac{Q^{dn-1}}{H^{d}} \circ \pi \right) \cdot
   \left( \frac{Q^{d}}{G_{2}(Q,H)} \circ \pi \right) \cdot
   \left( \frac{H^{d}}{G_{2}(Q^{n},H)} \circ \pi \right). $$
Note that on $V^*$ the function $H \circ \pi$ vanishes only along
$E(K)$ and  $Q \circ \pi$ vanishes along $E(K)$ too. Therefore we
can apply Proposition~\ref{L-2} (\L{}ojasiewicz inequality) to the
numerator and denominator of the first factor, which are regular
functions on the chart $V^{*}$, to immediately deduce that the
first factor extends to a continuous rational function on $V^{*}$
that vanishes along $E(K) \cap V^{*}$ for $n \gg 1$.

\vspace{1ex}

What still remains to prove (cf.~\cite{K-N}) is that the factors
$$ \frac{Q^{dn}}{G_{2}(Q^{n},H)}, \ \ \frac{Q^{d}}{G_{2}(Q,H)} \ \
   \text{and} \ \ \frac{H^{d}}{G_{2}(Q^{n},H)} $$
are regular functions off $W(K)$ whose valuations are bounded from
below. But this follows immediately from an auxiliary lemma:


\begin{lemma}\label{aux-lemma}
Let $g$ be the polynomial from the proof of
Lemma~\ref{zero-lemma}. Then the set of values
$$ v\left(\frac{t^{d}}{g(t)} \right) \in \Gamma, \ \ \ t \in K, $$
is bounded from below.
\end{lemma}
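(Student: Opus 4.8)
The plan is to replace the compactness argument that would settle the lemma over a locally compact ground field by the closedness theorem, specifically Corollary~\ref{clo-th-cor-0}. Write $g(t) = t^{d} + a_{1}t^{d-1} + \cdots + a_{d}$ for the polynomial of Lemma~\ref{zero-lemma}, which has no zero in $K$; in particular $a_{d} = g(0) \neq 0$. I will analyze $v(t^{d}/g(t)) = d\,v(t) - v(g(t))$ on the two closed $\mathcal{L}$-definable pieces $\{v(t) \geq 0\} = R$ and $\{v(t) \leq 0\}$, and produce a lower bound on each; the minimum of the two bounds then gives the assertion. The point $t=0$ is harmless, since there the value is $\infty$.

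On $R$ the argument is direct. The map $g \colon R \to K$ is a continuous $\mathcal{L}$-definable map on the closed definable subset $R$ of $K$, and $0 \notin g(R)$ because $g$ has no zero in $K$. Hence Corollary~\ref{clo-th-cor-0} shows that $g(R)$ is closed in $K$ and avoids $0$, so $0$ has a $K$-neighbourhood disjoint from $g(R)$; this means $v(g(t)) \leq \alpha_{0}$ for some $\alpha_{0} \in \Gamma$ and all $t \in R$. Since $d\,v(t) \geq 0$ on $R$, we obtain $v(t^{d}/g(t)) \geq -\alpha_{0}$ there.

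On $\{v(t) \leq 0\}$ I will reduce to the previous case by inversion. Substituting $t = 1/s$ with $s \in R$, a direct computation gives $g(1/s) = s^{-d}\,\tilde{g}(s)$, where $\tilde{g}(s) := 1 + a_{1}s + \cdots + a_{d}s^{d}$, and therefore $t^{d}/g(t) = 1/\tilde{g}(s)$ and $v(t^{d}/g(t)) = -v(\tilde{g}(s))$. The polynomial $\tilde{g}$ has no zero on $R$: any zero $s \neq 0$ would yield a zero $1/s$ of $g$, which is excluded, while $\tilde{g}(0) = 1 \neq 0$. Applying Corollary~\ref{clo-th-cor-0} to $\tilde{g}\colon R \to K$ exactly as above bounds $v(\tilde{g}(s)) \leq \alpha_{1}$ on $R$, whence $v(t^{d}/g(t)) = -v(\tilde{g}(s)) \geq -\alpha_{1}$ on $\{v(t) \leq 0\}$.

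The only non-formal step, and the reason the lemma is not automatic over a non-locally-compact field, is the passage ``$g$ (resp.\ $\tilde g$) has no zero on the closed definable set $R$, hence $v(g(R))$ (resp.\ $v(\tilde g(R))$) is bounded above''. Compactness of $R$ is unavailable here, and I expect this to be the crux: it is precisely where the closedness theorem, via Corollary~\ref{clo-th-cor-0}, does the work, turning the absence of zeros into a uniform lower bound on the valuation. Everything else is the elementary inversion identity and bookkeeping of the two regimes.
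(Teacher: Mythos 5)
Your proof is correct, and it takes a genuinely different route from the paper's. You split $K$ into the valuation ring $R$ and its complement, handle the complement by the reciprocal-polynomial substitution $t = 1/s$, and on $R$ you invoke Corollary~\ref{clo-th-cor-0}: since $g$ (resp.\ $\tilde g$) is a continuous $\mathcal{L}$-definable map on the closed definable set $R$ with no zeros, its image is closed and misses $0$, whence $v(g)$ (resp.\ $v(\tilde g)$) is bounded above on $R$. The paper argues instead purely valuation-theoretically: after discarding the regime $i\,v(t) < v(a_{i})$ (where the value is $0$), it rescales $g$ to a monic polynomial $h$ with coefficients in $R$ and no roots in $K$, passes to a splitting field $\widetilde{K}$ with the unique extension $\widetilde{v}$ of $v$, writes $h(t) = \prod_{i}(t-u_{i})$, and uses Lemma~\ref{closed-lemma} (closedness of $K$ in $\overline{K}$, hence of $R$ in $\widetilde{R}$) to bound each $\widetilde{v}(t-u_{i}) \leq l$ uniformly for $t \in R$. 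What each approach buys: yours is shorter and never leaves $K$ (no splitting fields, no extensions of valuations), at the price of invoking the closedness theorem, one of the paper's principal results; the paper's version keeps this auxiliary lemma elementary and independent of that machinery, resting only on Lemma~\ref{closed-lemma}. Since Lemma~\ref{aux-lemma} is used only inside the proof of Theorem~\ref{ext-th}, which already depends on the closedness theorem through the descent property and the \L{}ojasiewicz inequality, your extra dependence creates no circularity. One tiny wording slip: what the closedness argument yields is an \emph{upper} bound on $v(g(t))$ over $R$ (equivalently a lower bound on $|g(t)|$), which your displayed inequalities state correctly even though the closing paragraph calls it a lower bound on the valuation.
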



In order to prove this lemma, observe that
$$ v\left( \frac{t^{d}}{g(t)} \right) = v\left( 1+ \frac{a_{1}}{t} + \cdots +
   \frac{a_{d}}{t^{d}} \right) $$
for $t \in K$, $ t \neq 0$. Hence the values under study are zero
if $i v(t) < v(a_{i})$ for all $i=1,\ldots,d$. Therefore, we are
reduced to analysing the case where
$$ v(t) \geq k := \min \, \left\{ \frac{v(a_{i})}{i}:
   \; i=1,\ldots,d \right\}. $$
Denote by $\Gamma$ the value group of $v$. Thus we must show that
the set of values $v(g(t)) \in \Gamma$ when $v(t) \geq k$ is
bounded from above.

\vspace{1ex}

Take elements $a,b \in R$, $a,b \neq 0$, such that $a a_{i} \in R$
for all $i=1,\ldots,d$, and $bt \in R$ whenever $v(t) \geq k$.
Then
$$ h(abt) := (ab)^{d}g(t) = $$
$$ = (abt)^{d} + aba_{1}(abt)^{d-1} + (ab)^{2}a_{2}(abt)^{d-2}
   + \cdots + (ab)^{d}a_{d} $$
is a monic polynomial with coefficients from $R$ which has no
roots in $K$. Clearly, it is sufficient to show that the set of
values $v(h(t)) \in \Gamma$ when $t \in R$ is bounded from above.

\vspace{1ex}

Consider a splitting field $\widetilde{K}=K(u_{1},\ldots,u_{d})$
of the polynomial $h$, where $u_{1},\ldots,u_{d}$ are the roots of
$h$. Let $\widetilde{v}$ be a (unique) extension to
$\widetilde{K}$ of the valuation $v$, $\widetilde{R}$ be its
valuation ring and $\widetilde{\Gamma} \supset \Gamma$
its value group (see e.g.\ \cite[Chap.~VI, \S~11]{Zar-Sam}
for valuations of algebraic field extensions).
Then
$$ u_{1},\ldots,u_{d} \in \widetilde{R} \setminus R \ \ \mbox{
   and } \ \ h(t) = \prod_{i=1}^{d} \, (t - u_{i}). $$
Since $R$ is a closed subring of $\widetilde{R}$ by
Lemma~\ref{closed-lemma}, there exists an $l \in
\widetilde{\Gamma}$ such that $\widetilde{v}(t-u_{i}) \leq l$ for
all $i=1,\ldots,d$ and $t \in R$. Hence $v(h(t)) \leq d l$ for all
$t \in R$, and thus the lemma follows. \hspace*{\fill} $\Box$

\vspace{2ex}

In this fashion, we have demonstrated how to adapt the proof of
Proposition~11 from~\cite{K-N} to the case of Henselian rank one
valued fields of equicharacteristic zero. Note that the proofs of
all remaining results from that paper work over general
topological fields with a density property introduced in~\cite[Section~3]{K-N}
and recalled below. A
topological field $K$ satisfies the {\it density property}
if one of the following equivalent conditions
holds:
\begin{enumerate}
\item If $X$ is a smooth, irreducible $K$-variety
 and $\emptyset \neq U\subset X$ is a Zariski open subset,
then $U(K)$ is dense in $X(K)$ in the $K$-topology. \item If $C$
is a smooth, irreducible $K$-curve
 and $\emptyset \neq C^{0} \subset C$ is a Zariski open subset,
then $C^{0}(K)$ is dense in $C(K)$ in the $K$-topology. \item If
$C$ is a smooth, irreducible $K$-curve, then $C(K)$ has no
isolated points.
\end{enumerate}
The examples of such fields are, in particular, all Henselian rank
one valued fields.

\section{Regulous functions and sets}
In these last three sections, we shall carry the theory of
regulous functions over the real ground field $\mathbb{R}$,
developed by Fichou--Huisman--Mangolte--Monnier~\cite{FHMM}, over
to non-archimedean algebraic geometry over Henselian rank one
valued fields $K$ of equicharacteristic zero. We assume that the
ground field $K$ is not algebraically closed. (Otherwise, the
notion of a regulous function on a normal variety coincides with
that of a regular function and, in general, the study of
continuous rational functions leads to the concept of {\it
seminormality} and {\it seminormalization}; cf.~\cite{A-B,A-N} or
\cite[Section~10.2]{Kol-1} for a recent treatment.) Every such
field enjoys the density property. The $K$-points $X(K)$ of any
algebraic $K$-variety $X$ inherit from $K$ a topology, called the
$K$-topology.

\vspace{1ex}

In this section, we deal with the ground fields $K$ with the
density property. Observe first that if $f$ is a rational function
on an affine $K$-variety $X$ which is regular on a Zariski open
subset $U$, then there exist two regular functions $p,q$ on $X$
such that
$$ f = \frac{p}{q} \ \ \ \text{and} \ \ \ q(x) \neq 0 \ \
   \text{for all} \ \ x \in U(K). $$
When $X \subset K\mathbb{A}^{n}$, then $p,q$ can be polynomial
functions. For every rational function $f$ on $X$, there is a
largest Zariski open subset of $X$ on which $f$ is regular, called
the regular locus of $f$ and denoted by $\text{dom}\, (f)$.
Further, assume that $Z$ is a closed subvariety of a $K$-variety
$X$. Then every rational function $f$ on $Z$ that is regular on
$Z(K)$ extends to a rational function $F$ on $X$ that is regular
on $X(K)$. Both the results can be deduced via
Lemma~\ref{zero-lemma} (cf.~\cite{K-N}, the proof of Lemma~15 on
extending regular functions).

\vspace{1ex}

Suppose now that $X$ is a smooth affine $K$-variety or, at least,
an affine $K$-variety that is smooth at all $K$-points $X(K)$. We
say that a function $f$ on $X(K)$ is $k$-regulous, $k \in
\mathbb{N} \cup \{ \infty \}$, if it is of class $\mathcal{C}^{k}$
and there is a Zariski dense open subset $U$ of $X$ such that the
restriction of $f$ to $U(K)$ is a regular function. A function $f$
on $X(K)$ is called regulous if it is $0$-regulous. Denote by
$\mathcal{R}^{k}(X(K))$ the ring of $k$-regulous functions on
$X(K)$. The ring $\mathcal{R}^{\infty}(X(K))$ of $\infty$-regulous
functions on $X(K)$ coincides with the ring $\mathcal{O}(X(K))$ of
regular functions on $X(K)$. This follows easily from the faithful
flatness of the formal power series ring $K[[x_{1},\ldots,x_{n}]]$
over the local ring of regular function germs at $0 \in K^{n}$.
Therefore we shall restrict ourselves to the case $k \in
\mathbb{N}$.

\vspace{1ex}

When $K$ is a Henselian rank one valued field of
equicharacteristic zero, transformation to a normal crossing by
blowing up along smooth centers and the descent property
(Corollary~\ref{clo-th-cor-4}) enable the following
characterization:

\vspace{1ex}

\begin{em}
Given a smooth algebraic $K$-variety $X$, a function
$$ f: X(K) \to K $$
is regulous iff there exists a finite composite $\sigma:
\widetilde{X} \to X$ of blow-ups with smooth centers such that the
pull-back $f^{\sigma} := f \circ \sigma$ is a regular function on
$\widetilde{X}(K)$.
\end{em}

\vspace{1ex}

We say that a subset $V$ of $K^{n}$ is $k$-regulous closed if it
is the zero set of a family $E \subset
\mathcal{R}^{k}(\mathbb{K}^{n})$ of $k$-regulous functions:
$$ V = \mathcal{Z}(E) := \{ x \in K^{n}: \ f(x)=0 \ \ \text{for
   all} \ f \in E \}. $$
A subset $U$ of $K^{n}$ is called $k$-regulous open if its
complement $\mathbb{K}^{n} \setminus U$ is $k$-regulous closed.
The family of $k$-regulous open subsets of $K^{n}$ is a topology
on $K^{n}$, called the $k$-regulous topology on $K^{n}$.

\vspace{1ex}

If $f \neq 0$ is a $k$-regulous function on $K^{n}$ with regular
locus
$$ U = \text{dom}\, (f) \subset K^{n}, $$
then
$$ f = \frac{p}{q} \ \ \ \text{where} \ \ p,q \in
   K[x_{1},\ldots,x_{n}], \ \mathcal{Z}(q) = K^{n} \setminus U(K), $$
and $p,q$ are coprime polynomials. Clearly, $\mathcal{Z}(q)
\subset \mathcal{Z}(p)$ and it follows, by passage to the
algebraic closure of $K$, that the zero set $\mathcal{Z}(q)$ is of
codimension $\geq 2$ in $K^{n}$. Thus the complement $K^{n}
\setminus \text{dom}\, (f)$ is of codimension $ \geq 2$ in
$K^{n}$. Consequently, every $k$-regulous function on $K$ is
regular and every $k$-regulous function on $K^{2}$ is regular at
all but finitely many points.

\vspace{1ex}

We now recall some results about algebraic varieties over
arbitrary fields $F$. Let $V$ be an affine $F$-variety. We are
interested in the set $V(F)$ of its $F$-points. Therefore, from
now on, we shall (and may) assume that $V(F)$ is Zariski dense in
$V$. Then the regular locus ${\rm Reg}\, (V)$ of $V$ is a
non-empty, Zariski open subset of $V$ and, moreover, its trace on
the set $V(F)$ is non-empty; cf.~\cite{Ku}, Chap.~VI,
Corollary~1.17 to the Jacobian criterion for regular local rings
and the remark preceding it. If the ground field $F$ is not
algebraically closed, then the trace ${\rm Reg}\, (V) \cap V(F)$
is smooth and affine, because it follows immediately from
Lemma~\ref{zero-lemma} that every algebraic subset of $F^{n}$ is
the zero set of one polynomial. Summing up, we see that, for every
affine $F$-subvariety $V$ of $F\mathbb{A}^{n}$, the set $V(F)
\subset F^{n}$ of its $F$-points is a finite (disjoint) union of
smooth, affine, Zariski locally closed subsets of $F^{n}$ of pure
dimension.

\vspace{1ex}

We call a subset $E$ of $K^{n}$ constructible if it is a (finite)
Boolean combination of Zariski closed subsets of $K^{n}$. Every
such set $E$ is, of course, a finite union of Zariski locally
closed subsets. Further, in view of the foregoing discussion, $E$
is a finite union of smooth, affine, Zariski locally closed
subsets of pure dimension with irreducible Zariski closure. Thus
it follows immediately from the density property that every closed
(in the $K$-topology) constructible subset $E$ of $K^{n}$ is a
finite union of a unique irredundant family $\Sigma(E)$ of
constructible subsets each of which is the regular locus ${\rm
Reg}\, (V) \cap K^{n}$ of an irreducible affine $K$-subvariety $V$
of $K\mathbb{A}^{n}$; obviously, ${\rm Reg}\, (V) \cap K^{n}$ is a
smooth, Zariski locally closed subset of pure dimension $\dim V$.

\vspace{1ex}

Below we introduce the constructible topology on $K^{n}$. We shall
see in the next section that the $k$-regulous topology coincides
with the constructible topology for all $k \in \mathbb{N}$.

\begin{proposition}\label{construct-top}
If $K$ is a topological field with the density property, then the
family of all closed (in the $K$-topology) constructible subsets
of $K^{n}$ is the family of closed sets for a topology, called the
constructible topology on $K^{n}$. Furthermore, this topology is
noetherian, i.e.\ every descending sequence of closed
constructible subsets of $K^{n}$ stabilizes.
\end{proposition}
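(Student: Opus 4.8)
The plan is to verify the three axioms needed for the family $\mathcal{F}$ of closed (in the $K$-topology) constructible subsets of $K^{n}$ to be the closed sets of a topology, and then to establish the descending chain condition. The formal part comes first: both $\emptyset$ and $K^{n}$ are Zariski closed, hence constructible, and are $K$-closed, so they lie in $\mathcal{F}$; moreover a finite union of constructible sets is constructible while a finite union of $K$-closed sets is $K$-closed, so $\mathcal{F}$ is stable under finite unions. The one axiom that is not formal is stability under arbitrary intersections, and I would deduce it from the noetherian property rather than prove it directly. Thus the genuine content of the proposition is the descending chain condition, and I would prove that first.

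For the noetherian property I would argue by induction on the dimension $d$ of the Zariski closure, proving the statement that every descending chain of members of $\mathcal{F}$ whose Zariski closures have dimension at most $d$ stabilizes. Given such a chain $E_{1} \supseteq E_{2} \supseteq \cdots$, the Zariski closures form a descending chain of Zariski closed sets, which stabilizes by noetherianity of the Zariski topology; discarding an initial segment, I may assume $\overline{E_{j}} = Z$ is constant. The key step, and the only place where the density property enters, is the observation that since each $E_{j}$ is $K$-closed, constructible and Zariski dense in $Z$, its trace $E_{j} \cap {\rm Reg}\,(Z)(K)$ on the regular locus is relatively $K$-closed, constructible and Zariski dense, and must in fact equal all of ${\rm Reg}\,(Z)(K)$: the relative complement is $K$-open and, being constructible and not Zariski dense, is contained in a proper Zariski closed subset of each component, so by the density property it has empty $K$-interior and is therefore empty. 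Hence ${\rm Reg}\,(Z)(K) \subseteq E_{j}$ for every $j$, and writing $W := Z \setminus {\rm Reg}\,(Z)$, a proper Zariski closed set with $\dim W < d$, the sets $E_{j}' := E_{j} \cap W(K)$ form a descending chain in $\mathcal{F}$ of strictly smaller dimension. By the induction hypothesis this chain stabilizes, and since $E_{j} = {\rm Reg}\,(Z)(K) \sqcup E_{j}'$ with a constant first summand, the original chain stabilizes as well; the base case $d \leq 0$ is immediate, the sets then being finite. The unique irredundant decomposition $\Sigma(E)$ recalled just above makes this peeling-off geometrically transparent.

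Finally, the noetherian property yields the minimal condition on $\mathcal{F}$, and from it the remaining topology axiom: given an arbitrary family $\{A_{i}\}_{i \in I}$ in $\mathcal{F}$, the collection of all finite intersections $A_{i_{1}} \cap \cdots \cap A_{i_{k}}$ lies in $\mathcal{F}$ and so possesses a minimal element $B_{0}$; for each $i$ minimality forces $B_{0} \cap A_{i} = B_{0}$, whence $B_{0} = \bigcap_{i} A_{i}$, showing the arbitrary intersection is again in $\mathcal{F}$. I expect the main obstacle to be the key step of the induction, namely that a $K$-closed, Zariski dense constructible subset of a smooth locus fills it up; this is precisely where the density property must be invoked, through the fact that a proper Zariski closed subset of a smooth irreducible variety has empty interior in the $K$-topology, and one must also be careful to run the argument componentwise on the possibly reducible $Z$ and to check the dimension drop when passing to $W$.
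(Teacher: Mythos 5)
Your proof is correct, but it takes a genuinely different route from the paper's. The paper, like you, reduces everything to the noetherianity claim (closure of the family under arbitrary intersections being a formal consequence of the descending chain condition via minimal elements --- the paper hides this behind ``Clearly, it suffices to prove only the last assertion'', whereas you spell it out). For noetherianity itself, however, the paper argues by a lexicographic well-ordering: to each closed constructible set $E$ it attaches the multi-index $\mu(E)=(\mu_{n}(E),\ldots,\mu_{0}(E))\in\mathbb{N}^{n+1}$, where $\mu_{i}(E)$ counts the members of dimension $i$ in the irredundant family $\Sigma(E)$, and uses that for closed constructible $D\subset E$ one has $\mu(D)\leq\mu(E)$ lexicographically, with equality iff $D=E$; a descending chain of sets then yields a weakly decreasing sequence in a well-ordered set, which must stabilize. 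Your induction on the dimension of the Zariski closure, with the key lemma that a $K$-closed constructible set Zariski dense in $Z$ must contain ${\rm Reg}\,(Z)(K)$, is a different organization of the same phenomenon: that lemma is essentially what is buried in the paper's ``easy to check'' monotonicity of $\mu$, so your version makes the role of the density property fully explicit, at the cost of length, while the paper's multi-index trick is slicker once $\Sigma(E)$ is granted.

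One point to tighten: you assert the relative complement ${\rm Reg}\,(Z)(K)\setminus E_{j}$ is ``not Zariski dense'' as if it were given, but it must be extracted, and in the stronger form ``not Zariski dense in any irreducible component of $Z$''. The justification: for each component $Z_{i}$, the set $E_{j}\cap Z_{i}(K)$ is constructible and Zariski dense in $Z_{i}$, hence contains a relatively Zariski-open dense subset of $Z_{i}(K)$, so the complement lies in a proper Zariski closed subset of each $Z_{i}$. With that inserted, your density-property argument (a relatively $K$-open subset of the smooth locus contained in a proper Zariski closed subset is empty) goes through componentwise exactly as you describe.
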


\begin{proof} Clearly, it suffices to prove only the last assertion.
We shall follow the reasoning from our paper~\cite{Now0} which
showed that the quasianalytic topology is noetherian. For any
closed (in the $K$-topology) constructible subset $E$ of $K^{n}$,
let $\mu_{i}(E)$ be the number of elements from the family
$\Sigma(E)$, constructed above, of dimension $i$,
$i=0,1,\ldots,n$, and put
$$ \mu(E) = (\mu_{n}(E), \mu_{n-1}(E), \ldots, \mu_{0}(E))
   \in \mathbb{N}^{n+1}. $$
Consider now a descending sequence of closed constructible subsets
$$ K^{n} \supset E_{1} \supset E_{2} \supset E_{3} \supset \ldots \,. $$
It is easy to check that for any two closed (in the $K$-topology)
constructible subsets $D \subset E$, we have $\mu(D) \leq \mu(E)$
and, furthermore, \ $D=E$ \ iff \ $\mu(D) = \mu(E)$. Hence we get
the decreasing (in the lexicographic order) sequence of
multi-indices
$$ \mu(E_{1}) \geq \mu(E_{2}) \geq \mu(E_{3}) \geq \ldots \,, $$
which must stabilize for some $N \in \mathbb{N}$:
$$ \mu(E_{N}) = \mu(E_{N+1}) = \mu(E_{N+2}) = \ldots  $$
Then
$$ E_{N} = E_{N+1} = E_{N+2} = \ldots \,, $$
as desired.
\end{proof}

\begin{corollary}\label{correspond-prop}
Suppose $K$ is a field with the density property. Then there is a
one-to-one correspondence between the irreducible closed
constructible subsets $E$ of $K^{n}$ and the irreducible Zariski
closed subsets $V$ of $K^{n}$:
$$ \alpha: E \longmapsto \overline{E}^{Z} \ \ \ \text{and} \ \ \
   \beta: V \longmapsto \overline{\text{Reg}\, (V)}^{c},   $$
where $\overline{E}^{Z}$ stands for the Zariski closure of $E$ and
$\overline{A}^{c}$ for the closure of $A$ in the constructible
topology.
\end{corollary}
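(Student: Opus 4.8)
The plan is to verify that $\alpha$ and $\beta$ are well-defined and mutually inverse. The first thing I would record is that the constructible topology is finer than the Zariski topology: every Zariski closed set is closed in the $K$-topology (regular functions being continuous) and is trivially constructible, hence is closed in the constructible topology by Proposition~\ref{construct-top}. Consequently the identity map from $K^{n}$ with the constructible topology to $K^{n}$ with the Zariski topology is continuous, so it carries irreducible sets to irreducible sets. Thus for an irreducible closed constructible $E$ the Zariski closure $\overline{E}^{Z}$ is an irreducible Zariski closed set (whose $K$-points contain $E$ and are therefore Zariski dense), so $\alpha$ is well-defined. I would also record the elementary chain $S\subseteq \overline{S}^{c}\subseteq \overline{S}^{Z}$, valid for any $S$, which yields $\overline{\,\overline{S}^{c}\,}^{Z}=\overline{S}^{Z}$; this identity will settle one of the two compositions at once.

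Next I would check that $\beta$ is well-defined, which is where the density property does the real work. Fix an irreducible Zariski closed $V$ with $V(K)$ Zariski dense and put $S:={\rm Reg}\,(V)\cap K^{n}$. The crucial point is that $S$ is irreducible in the constructible topology. To see this I would first prove the auxiliary fact that any closed constructible $S_{1}$ with $S_{1}\subsetneq S$ satisfies $\overline{S_{1}}^{Z}\subsetneq V$: were $S_{1}$ Zariski dense in the irreducible variety ${\rm Reg}\,(V)$, it would contain a nonempty Zariski open $U\subseteq{\rm Reg}\,(V)$, and by the density property $U(K)$ would be $K$-dense in ${\rm Reg}\,(V)\cap K^{n}=S$; since $S_{1}$ is closed in the $K$-topology this would force $S_{1}\supseteq S$, a contradiction. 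Granting this, a splitting $S=S_{1}\cup S_{2}$ into two proper closed constructible subsets would give $V=\overline{S}^{Z}=\overline{S_{1}}^{Z}\cup\overline{S_{2}}^{Z}$, a union of two proper Zariski closed sets, contradicting the irreducibility of $V$. Hence $S$ is irreducible, and therefore $\beta(V)=\overline{S}^{c}$ is an irreducible closed constructible set. I expect this irreducibility statement, resting on the density property exactly as in the existence of the family $\Sigma(E)$, to be the main obstacle.

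Finally I would verify that the two composites are the identity. For $\alpha\circ\beta$, the chain from the first paragraph gives $\alpha(\beta(V))=\overline{\,\overline{S}^{c}\,}^{Z}=\overline{S}^{Z}=\overline{{\rm Reg}\,(V)}^{Z}=V$, the last equality because $V(K)$ is Zariski dense in the irreducible $V$ while $({\rm Sing}\,(V))(K)$ lies in a proper Zariski closed subset, so ${\rm Reg}\,(V)\cap K^{n}$ is already Zariski dense in $V$. For $\beta\circ\alpha$, let $E$ be irreducible closed constructible and set $V:=\overline{E}^{Z}$. Writing $E$ as the finite union of its family $\Sigma(E)$, say $E=\bigcup_{i}S_{i}$ with $S_{i}={\rm Reg}\,(V_{i})\cap K^{n}$, each $\overline{S_{i}}^{c}$ is contained in the closed set $E$ and is irreducible by the previous paragraph, so $E=\bigcup_{i}\overline{S_{i}}^{c}$ is a finite union of irreducible closed constructible sets. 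Irreducibility of $E$ then forces $E=\overline{S_{i_{0}}}^{c}$ for some index $i_{0}$; taking Zariski closures gives $V_{i_{0}}=\overline{E}^{Z}=V$, whence $S_{i_{0}}={\rm Reg}\,(V)\cap K^{n}$ and $E=\overline{{\rm Reg}\,(V)\cap K^{n}}^{c}=\beta(\alpha(E))$. This establishes that $\alpha$ and $\beta$ are mutually inverse bijections, as required.
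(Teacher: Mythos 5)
Your proposal is correct, and it rests on the same two pillars as the paper's proof: the identity $\overline{\mathrm{Reg}\,(V)}^{Z}=V$ (Zariski density of the regular $K$-points) for $\alpha\circ\beta=\mathrm{Id}$, and the irredundant decomposition $E=\bigcup\Sigma(E)$ for the other composite. The organization, however, is genuinely different. The paper's proof is purely formal: it proves $\alpha\circ\beta=\mathrm{Id}$, asserts from the $\Sigma(E)$-discussion that $\beta$ is surjective onto the irreducible closed constructible sets, and then obtains $\beta\circ\alpha=\mathrm{Id}$ by writing $E=\beta(V)$ and computing $(\beta\circ\alpha)(\beta(V))=\beta((\alpha\circ\beta)(V))=\beta(V)=E$. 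In particular, the paper never explicitly checks that $\beta(V)$ is irreducible in the constructible topology, i.e.\ the well-definedness of $\beta$; and without that, one only knows that $\alpha$ is injective on irreducible closed constructible sets with inverse $\beta$ on its image, while surjectivity of $\alpha$ still needs exactly this point (in the paper it is implicitly buried in the uniqueness of the irredundant family $\Sigma$). Your second paragraph supplies precisely this missing verification, via the density-property lemma that a $K$-closed constructible set which is Zariski dense in $V$ must contain all of $\mathrm{Reg}\,(V)\cap K^{n}$. So your route is longer but more complete, and the direct density argument is the real content that the paper's formal manipulation presupposes.

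One small repair is needed in your irreducibility step. You speak of ``a splitting $S=S_{1}\cup S_{2}$ into two proper closed constructible subsets''; since $S=\mathrm{Reg}\,(V)\cap K^{n}$ is in general only locally closed, no union of two sets closed in $K^{n}$ can equal $S$ unless $S$ is itself closed, so as literally stated this is vacuous. What you need, and what your density argument actually proves verbatim, is the relative version: if $C$ is closed constructible and $C\cap S\neq S$, then $\overline{C\cap S}^{Z}\subsetneq V$ --- in the last step one uses $K$-closedness of $C$ rather than of $C\cap S$. Applying this to a splitting $\overline{S}^{c}=F_{1}\cup F_{2}$ by proper closed constructible sets (legitimate, since $\overline{S}^{c}$ is closed) intersected with $S$, the contradiction with the irreducibility of $V$ runs exactly as you wrote it, and the rest of your argument is unaffected.
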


\begin{proof} We have $\alpha \circ \beta = \text{Id}$, because
$\overline{\text{Reg}\, (V)}^{Z} = V$ for every irreducible
Zariski closed subset $V$ of $K^{n}$. In view of the foregoing
discussion, the assignment $\beta$ is surjective. Therefore every
irreducible closed constructible subset $E$ of $K^{n}$ is of the
form $E = \beta(V) = \overline{\text{Reg}\, (V)}^{c}$ for an
irreducible Zariski closed subset $V$ of $K^{n}$. Hence
$$ (\beta \circ \alpha)(E) = (\beta \circ \alpha)(\beta(V)) =
   (\beta \circ \alpha \circ \beta)(V)
   = (\beta \circ \text{Id})(V) = \beta(V) = E, $$
and thus $\beta \circ \alpha = \text{Id}$, which finishes the
proof.
\end{proof}

Below we recall Proposition~8 from~\cite{K-N} which holds over any
topological fields with the density property.

\begin{proposition}\label{filtration}
Let $X$ be an algebraic $K$-variety and $f$ a rational function on
$X$ that is regular on $X^{0} \subset X$. Assume that
$f|_{X^{0}(K)}$ has a continuous extension $f^{c}: X(K)\to K$. Let
$Z\subset X$ be an irreducible subvariety that is not contained in
the singular locus of $X$. Then there is a Zariski dense open
subset $Z^{0}\subset Z$ such that $f^c|_{Z^{0}(K)}$ is a regular
function. \ \ \hspace*{\fill} $\Box$
\end{proposition}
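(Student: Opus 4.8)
The plan is to reduce to a smooth affine situation and then resolve the indeterminacy of the rational map defined by $f$, transporting the problem to a variety on which $f$ becomes regular and using the density property to descend back to $Z$. Since regularity on a dense open subset of $Z$ is a local and generic statement, I would first pass to an affine chart. As $Z$ is irreducible and, by hypothesis, not contained in the singular locus of $X$, the set $Z \cap \mathrm{Reg}(X)$ is a non-empty Zariski open, hence dense, subset of $Z$; intersecting further with the regular locus of $Z$ and restricting all considerations to $\mathrm{Reg}(X)$, I may assume that both $X$ and $Z$ are smooth and that $Z$ is an irreducible closed subvariety with $Z(K)$ Zariski dense. Note that $f^{c}|_{Z(K)}$ is automatically continuous, so the only content of the assertion is the regularity of $f^{c}$ on \emph{some} Zariski dense open subset of $Z$.

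Next I would resolve $f$. Regarding $f$ as a rational map $X \dashrightarrow \mathbb{P}^{1}$, I would choose a finite composite $\sigma : X' \to X$ of blow-ups with smooth centres (available in equicharacteristic zero) such that the pull-back $f' := f \circ \sigma$ is a regular morphism on $X'$, with $\sigma$ an isomorphism over $X^{0} := \mathrm{dom}\,(f)$. On $\sigma^{-1}(X^{0})(K)$ one has $f' = f \circ \sigma = f^{c} \circ \sigma$; since $X'$ is smooth and $\sigma^{-1}(X^{0})$ is Zariski dense and open, the density property makes $\sigma^{-1}(X^{0})(K)$ dense in $X'(K)$. As $f'$ and $f^{c} \circ \sigma$ are continuous and agree on this dense set, they coincide:
\[ f^{c}(\sigma(x')) = f'(x') \qquad \text{for all } x' \in X'(K). \]
Because $f^{c}$ takes values in $K$, this also shows that $f'$ is $K$-valued along $\sigma^{-1}(Z)(K)$.

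The crucial consequence is that $f'$ is fibrewise constant over $Z$. Indeed, for a fixed $z \in Z(K)$ and any two points $x',x'' \in \sigma^{-1}(z)(K)$ the displayed identity gives $f'(x') = f^{c}(z) = f'(x'')$, so the regular function $f'$ is constant on the $K$-points of the fibre $\sigma^{-1}(z)$. The fibres of the proper birational morphism $\sigma$ are connected by Zariski's main theorem, and their components are smooth rational varieties, so the density property makes their $K$-points Zariski dense; hence $f'$ is constant on each scheme-theoretic fibre $\sigma^{-1}(z)$ for $z$ in a dense open subset of $Z$. I would then descend: the fibre-constant regular function $f'|_{\sigma^{-1}(Z)}$ factors through the proper, connected-fibred map $\sigma^{-1}(Z) \to Z$ to a regular function on a dense open subset $Z^{0} \subset Z$, which by the displayed identity is exactly $f^{c}|_{Z^{0}}$. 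In the valued-field setting this descent step is precisely the descent property for blow-ups (Corollary~\ref{clo-th-cor-4}).

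The main obstacle is this descent step, namely upgrading ``$f'$ is constant on the $K$-points of every fibre of $\sigma^{-1}(Z) \to Z$'' to the existence of a genuinely \emph{regular} function on a dense open of $Z$ through which $f'$ factors. This is where both the normality of the situation and the density property are essential: the density property converts constancy on the (dense) $K$-points of each fibre into scheme-theoretic constancy, and connectedness of the fibres of $\sigma$ then lets $f'$ descend. The easy case, in which $Z \not\subset X \setminus X^{0}$ and the fibres $\sigma^{-1}(z)$ are generically single points so that the descent is automatic, serves as a sanity check; the whole difficulty is concentrated in the case $Z \subset X \setminus X^{0}$, where $Z$ lies in the (codimension $\geq 2$) indeterminacy locus and the fibres over $Z$ are positive-dimensional.
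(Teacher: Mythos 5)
The paper itself gives no proof of this proposition: it is recalled verbatim as Proposition~8 of \cite{K-N}, so your attempt has to be measured against the argument given there. Your overall plan (blow up, prove fibrewise constancy of the pulled-back function over $Z$, then descend) is in fact the plan of \cite{K-N}, but your choice of blow-up --- a resolution of the indeterminacy of $f$ --- makes the crucial step false, and this is a genuine gap, not a repairable detail. The faulty sentence is: ``their components are smooth rational varieties, so the density property makes their $K$-points Zariski dense; hence $f'$ is constant on each scheme-theoretic fibre.'' The density property only says that $U(K)$ is dense in $X'(K)$ \emph{in the $K$-topology} when $U$ is a dense Zariski-open subset of a smooth irreducible $X'$; it never guarantees that a smooth $K$-variety has Zariski-dense $K$-points, or any $K$-points at all. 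Worse, the fibre components of a resolution of indeterminacy need not be rational over $K$: the centres one is forced to blow up are in general Galois orbits defined only over proper finite extensions of $K$, even when $X$, $Z$, $f$ are defined over $K$. Concretely, take $K=\mathbb{R}$ or $K=\mathbb{R}((t))$, $X=\mathbb{A}^3$, $Z=\{x=y=0\}$, and $f=\bigl(x^3+z(x^2+y^2)\bigr)/(x^2+y^2)$, so that $f^{c}$ exists and $f^{c}|_{Z(K)}=z$. Blowing up $Z$ (chart $x=u$, $y=uv$) gives $f\circ\sigma_1=u/(1+v^2)+z$, which is still indeterminate along the curve $\{u=0,\ 1+v^2=0\}$ --- a curve with no $K$-points; resolving $f$ forces a modification over it, and (in the chart $u=sw$, $v=i+s$, $i^2=-1$) the resolved function equals $w/(2i)+z_{0}$ on the exceptional fibre over each $z_{0}\in Z(K)$, i.e.\ on a component isomorphic to $\mathbb{P}^{1}$ over $K(i)$ it is a \emph{non-constant} function of the fibre coordinate $w$. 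That component carries no $K$-points, so this is perfectly consistent with your identity $f'=f^{c}\circ\sigma$ on $K$-points, but scheme-theoretic fibre constancy --- and with it your descent --- fails. Invoking Corollary~\ref{clo-th-cor-4} cannot close the gap either: that corollary descends continuous definable functions on $K$-points and can never produce the \emph{regular} function the proposition demands.

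The proof in \cite{K-N} avoids all of this by adapting the blow-up to $Z$ rather than to $f$: after shrinking so that $Z$ is smooth, blow up $Z$ itself, so the exceptional set is the projective bundle $\mathbb{P}(N_{Z/X})\to Z$. Being a divisor in a smooth variety, it cannot lie in the (codimension $\geq 2$) indeterminacy locus of the pulled-back function, so that function is either regular or has a pole at the generic point of the exceptional divisor; the pole is excluded because, by the density property and continuity, $f^{c}$ would then fail to have finite values near $K$-points of $Z$ (a case your ``sanity check'' also glosses over). On this bundle your key idea becomes correct: the fibres over points of $Z(K)$ are honest projective spaces $\mathbb{P}^{c-1}_{K}$, whose $K$-points \emph{are} Zariski dense, so agreement with the constant $f^{c}(z)$ on $K$-points does force constancy on the whole fibre, and the regular function on a dense open subset of the exceptional divisor descends to a rational function on $Z$, regular on a dense open $Z^{0}$ and equal to $f^{c}$ on $Z^{0}(K)$. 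In short: the skeleton of your argument is the right one, but the specific choice of $\sigma$ is what makes it work, and with a general resolution of indeterminacy your intermediate claim is simply false.
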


We immediately obtain two corollaries:

\begin{corollary}\label{filtration-cor-1}
Let $X$ be an algebraic $K$-variety that is smooth at all
$K$-points $X(K)$ and $f$ a rational function on $X$ that is
regular on $X^{0} \subset X$. Assume that $f|_{X^{0}(K)}$ has a
continuous extension $f^{c}: X(K)\to K$. Then there is a sequence
of closed subvarieties
$$ \emptyset = X_{-1} \subset X_{0} \subset \cdots \subset X_{n}=X $$
such that for  $i=0,\dots, n$ the restriction of $f$ to $X_{i}(K)
\setminus X_{i-1}(K)$ is regular. Moreover, we can require that
each set $X_{i} \setminus X_{i-1}$ be smooth of pure dimension
$i$.  \hspace*{\fill} $\Box$
\end{corollary}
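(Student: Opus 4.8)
The plan is to descend through a dimension filtration of $X$, peeling off at each level a smooth stratum of pure dimension on whose $K$-points $f$ (which I identify throughout with its continuous extension $f^{c}$ to $X(K)$) is regular, feeding the irreducible components of the successive strata into Proposition~\ref{filtration}. I set $n=\dim X$ and $X_{n}:=X$, and I construct the closed subvarieties $X_{i}$ by downward recursion: given $X_{i}$, a closed subvariety of $X$ of dimension $\leq i$, I remove a suitable open pure-dimensional smooth piece $S_{i}$ and declare the remainder to be $X_{i-1}$.

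The core step is the construction of $S_{i}$. Let $W_{1},\ldots,W_{k}$ be the irreducible components of $X_{i}$ of dimension exactly $i$. For each $j$ there are two cases. If $W_{j}$ carries a $K$-point, then that point lies in $X(K)$ and hence, by the hypothesis that $X$ is smooth at all its $K$-points, is a smooth point of $X$; thus $W_{j}\not\subset \mathrm{Sing}(X)$ and Proposition~\ref{filtration} supplies a Zariski dense open $W_{j}^{\flat}\subset W_{j}$ on which $f$ restricts to a regular function. If $W_{j}$ has no $K$-point, I simply take $W_{j}^{\flat}:=W_{j}$, the requirement on $f$ being vacuous. In either case I set $W_{j}^{0}:=W_{j}^{\flat}\cap\mathrm{Reg}(W_{j})$, a Zariski dense open subset of $W_{j}$ that is smooth of pure dimension $i$ and on whose $K$-points $f$ is regular (generic smoothness in characteristic zero guarantees that $\mathrm{Reg}(W_{j})$ is dense). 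I then assemble
$$ S_{i}:=\bigcup_{j=1}^{k}\Big(W_{j}^{0}\setminus\big(\bigcup_{l\neq j}W_{l}\ \cup\ R_{i}\big)\Big), $$
where $R_{i}$ denotes the union of the components of $X_{i}$ of dimension $<i$, and I put $X_{i-1}:=X_{i}\setminus S_{i}$.

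The verifications are then routine. Each summand defining $S_{i}$ is the intersection of the open subvariety $W_{j}^{0}$ of $W_{j}$ with the complement in $X_{i}$ of a closed set, hence is open in $X_{i}$; the summands lie in distinct components and are pairwise disjoint, so $S_{i}$ is open in $X_{i}$, smooth of pure dimension $i$, and $f$ is regular on $S_{i}(K)$ (regularity being local and holding on each piece). Consequently $X_{i-1}=X_{i}\setminus S_{i}$ is a closed subvariety, of dimension $\leq i-1$, since removing the dense opens $W_{j}^{0}$ leaves only the lower-dimensional components $R_{i}$ together with proper closed subsets of the $W_{j}$. Iterating down to $i=0$ gives $S_{0}=X_{0}$ (a finite reduced set of points, on which every function is regular) and hence $X_{-1}=\emptyset$, and by construction $X_{i}\setminus X_{i-1}=S_{i}$ is smooth of pure dimension $i$ with $f|_{X_{i}(K)\setminus X_{i-1}(K)}$ regular, as required.

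I expect the only real subtlety to be the assembly of a single pure-dimensional smooth stratum out of the component-wise pieces $W_{j}^{0}$: one must excise the pairwise overlaps $W_{j}\cap W_{l}$ and the lower-dimensional part $R_{i}$ in order to keep $S_{i}$ open in $X_{i}$ and to make $f$ globally regular on it, rather than merely regular on each component separately. The role of the smoothness hypothesis on $X$ is precisely to certify, through the presence of a $K$-point, that the relevant components are not swallowed by $\mathrm{Sing}(X)$, so that Proposition~\ref{filtration} applies.
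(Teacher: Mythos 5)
Your proof is correct and takes essentially the approach the paper intends: the corollary is stated there as an immediate consequence of Proposition~\ref{filtration}, obtained by exactly this downward induction on dimension, applying the proposition to each top-dimensional irreducible component of the current closed subvariety (the presence of a $K$-point, together with smoothness of $X$ at all $K$-points, ensuring the component is not contained in $\mathrm{Sing}(X)$, and components without $K$-points being handled vacuously) and peeling off the resulting dense open smooth strata. Your bookkeeping with the overlaps $W_{j}\cap W_{l}$ and the lower-dimensional part $R_{i}$ is the right way to make the stratum open, pure-dimensional and the regularity global, so the argument stands as written.
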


\begin{corollary}\label{filtration-cor-2}
If $f$ is a regulous function on $K^{n}$, then there is a sequence
of Zariski closed subsets
$$ \emptyset = E_{-1} \subset E_{0} \subset \cdots \subset E_{n}= K^{n} $$
such that for  $i=0,\dots, n$ the restriction of $f$ to $E_{i}
\setminus E_{i-1}$ is regular. Moreover, we can require that each
set $E_{i} \setminus E_{i-1}$ be smooth of pure dimension $i$.
\hspace*{\fill} $\Box$
\end{corollary}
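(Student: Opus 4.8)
The plan is to deduce this statement directly from Corollary~\ref{filtration-cor-1} by specializing the ambient variety $X$ to the affine space $K\mathbb{A}^{n}$. First I would recall that $K\mathbb{A}^{n}$ is smooth at all of its points, and in particular at all of its $K$-points, so the standing hypothesis of Corollary~\ref{filtration-cor-1} is automatic. Next, by the very definition of a regulous (i.e.\ $0$-regulous) function, $f$ is of class $\mathcal{C}^{0}$ on $K^{n}$ and there is a Zariski dense open subset $U = \text{dom}\,(f)$ of $K\mathbb{A}^{n}$ on which $f$ coincides with a regular function; thus $f$ is a rational function on $K\mathbb{A}^{n}$, regular on $U$.

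The second step is to verify that the hypotheses of Corollary~\ref{filtration-cor-1} are met with $X := K\mathbb{A}^{n}$, $X^{0} := U$, and with $f$ itself in the double role of the rational function that is regular on $X^{0}$ and of its continuous extension $f^{c}: X(K) \to K$. This verification is immediate: regularity of $f|_{U(K)}$ is part of the definition of regulous, and the continuous extension $f^{c}$ is nothing but $f$ itself, continuity on all of $K^{n}$ being precisely the $\mathcal{C}^{0}$-condition in that definition.

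Applying Corollary~\ref{filtration-cor-1} then produces a chain of closed subvarieties $\emptyset = X_{-1} \subset X_{0} \subset \cdots \subset X_{n} = K\mathbb{A}^{n}$ such that the restriction of $f$ to each $X_{i}(K) \setminus X_{i-1}(K)$ is regular, with each stratum $X_{i} \setminus X_{i-1}$ smooth of pure dimension $i$. Setting $E_{i} := X_{i}(K)$, these are Zariski closed subsets of $K^{n}$ giving exactly the filtration $\emptyset = E_{-1} \subset E_{0} \subset \cdots \subset E_{n} = K^{n}$ asserted in the corollary, and the regularity, smoothness, and pure-dimensionality claims transfer verbatim.

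Since the entire argument reduces to a single specialization of the previous corollary, I do not expect a genuine obstacle. The only point deserving a word of care is the bookkeeping that identifies the abstract subvarieties $X_{i}$ with their sets of $K$-points $E_{i} \subset K^{n}$, and that confirms the smoothness and pure-dimensionality of the strata $X_{i} \setminus X_{i-1}$ carry over to the corresponding statements about $E_{i} \setminus E_{i-1}$.
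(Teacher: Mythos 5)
Your proposal is correct and matches the paper's intent exactly: the paper states this corollary (together with Corollary~\ref{filtration-cor-1}) as an immediate consequence of Proposition~\ref{filtration}, and the intended derivation is precisely your specialization of Corollary~\ref{filtration-cor-1} to $X = K\mathbb{A}^{n}$, with the regulous $f$ serving both as the rational function regular on $U = \text{dom}\,(f)$ and as its own continuous extension. The bookkeeping $E_{i} := X_{i}(K)$ is exactly the identification the paper leaves implicit.
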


\begin{remark}\label{filtr}
Given a finite number of regulous functions $f_{1},\ldots,f_{p}$,
there is a filtration
$$ \emptyset = E_{-1} \subset E_{0} \subset \cdots \subset E_{n}= K^{n} $$
as in the above corollary such that for $i=0,\dots, n$ the
restriction of each function $f_{j}$, $j=1,\ldots,p$, to $E_{i}
\setminus E_{i-1}$ is regular.
\end{remark}

Now, three further consequences of the above corollary will be
drawn. We say that a map
$$ f = (f_{1},\ldots,f_{p}): K^{n} \to K^{p} $$
is $k$-regulous if all its components $f_{1},\ldots,f_{p}$ are
$k$-regulous functions on $K^{n}$.

\begin{corollary}\label{filtration-cor-3}
If two maps
$$ g: K^{m} \to K^{n} \ \ \ \text{and} \ \ \ f: K^{n} \to K^{p} $$
are $k$-regulous, so is its composition $f \circ g$.
\end{corollary}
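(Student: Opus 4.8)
The plan is to treat the two requirements in the definition of a $k$-regulous map separately: differentiability of class $\mathcal{C}^k$, and regularity on a Zariski dense open subset. The first is immediate, since the composition of two maps of class $\mathcal{C}^k$ is again of class $\mathcal{C}^k$ by the chain rule; so the whole difficulty lies in producing a Zariski dense open subset $U$ of $K^m$ on which $f \circ g$ is a regular map. Writing $f = (f_1, \ldots, f_p)$, it suffices to find such a $U$ working simultaneously for all components, and then $(f \circ g)|_U = (f_1 \circ g, \ldots, f_p \circ g)|_U$ will be regular.

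First I would fix a Zariski dense open subset $V \subset K^m$ on which $g$ is regular, and let $Y$ be the Zariski closure of $g(V)$; since $K^m$ (hence $V$) is irreducible, $Y$ is an irreducible affine subvariety of $K^n$ and $g|_V$ factors as a dominant morphism $V \to Y$. The naive choice $U = g^{-1}(\text{regular locus of } f) \cap V$ fails when $g$ sends $V$ into the non-regular locus of $f$, so the image variety $Y$ may well lie inside the bad set; this is the main obstacle and the reason a filtration is needed rather than a single regular locus.

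To overcome it I would apply Remark~\ref{filtr} to the components $f_1,\ldots,f_p$, obtaining a filtration
$$ \emptyset = E_{-1} \subset E_0 \subset \cdots \subset E_n = K^n $$
by Zariski closed subsets such that the restriction of every $f_j$ to each stratum $E_i \setminus E_{i-1}$ is regular. Since $Y$ is irreducible, there is a least index $i_0$ with $Y \subset E_{i_0}$; then $Y \cap E_{i_0-1}$ is a proper Zariski closed subset of $Y$, so $W := Y \setminus E_{i_0-1}$ is a Zariski dense open subset of $Y$ contained in the stratum $S := E_{i_0} \setminus E_{i_0-1}$. On $S$ every $f_j$ is regular, whence so is the restriction $f_j|_W$.

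Finally I would set $U := (g|_V)^{-1}(W)$. Because $g|_V : V \to Y$ is dominant and $W$ is a nonempty Zariski open subset of $Y$, the preimage $U$ is a nonempty Zariski open subset of $V$, hence a Zariski dense open subset of $K^m$ (invoking the density property to see that its $K$-points are dense). On $U$ the map $g$ is regular with image in $W \subset S$, and each $f_j|_W$ is regular, so each $f_j \circ g$ is regular on $U$ as a composition of regular maps. Together with the $\mathcal{C}^k$ property this shows $f \circ g$ is $k$-regulous. I expect the only delicate point to be the bookkeeping that ensures $g|_V$ genuinely factors through $Y$ as a dominant morphism and that the relevant open sets carry dense sets of $K$-points, which is exactly where the density property enters.
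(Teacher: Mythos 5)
Your proof is correct and follows essentially the same route as the paper: both rest on the filtration of Remark~\ref{filtr} for the components of $f$ and on locating the unique stratum $E_{i_{0}} \setminus E_{i_{0}-1}$ whose preimage under $g$ (restricted to its regular locus) is Zariski dense open in $K^{m}$, so that $f \circ g$ is there a composition of regular maps. Your detour through the Zariski closure $Y$ of $g(V)$ and its minimal enclosing stratum is just more explicit bookkeeping for the same index $i_{0}$ that the paper finds directly by decomposing the regular locus of $g$ into the preimages of the strata.
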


\begin{proof} Indeed, let $U$ be the common regular locus of the
components $g_{1},\ldots,g_{n}$ of the map $g$:
$$ U := \text{dom}\, (g_{1}) \cap \ldots \cap \text{dom}\, (g_{n})
   \subset \mathbb{R}^{m}. $$
Take a filtration
$$ \emptyset = E_{-1} \subset E_{0} \subset \cdots \subset E_{n}= K^{n} $$
for the functions $f_{1},\ldots,f_{p}$ described in
Remark~\ref{filtr}. Then $U$ is the following union of Zariski
locally closed subsets
$$ U = \bigcup_{i=0}^{n} \left( U \cap g^{-1}
   (E_{i} \setminus E_{i-1}) \right). $$
Clearly, one of these sets, say $U \cap g^{-1} (E_{i_{0}}
\setminus E_{i_{0}-1})$  must be a Zariski dense open subset of
$U$ and of $K^{m}$ too. Hence $f \circ g$ is a regular function on
$U \cap g^{-1} (E_{i_{0}} \setminus E_{i_{0}-1})$, which is the
required result.
\end{proof}

\begin{corollary}\label{filtration-cor-4}
The zero set $\mathcal{Z}(f)$ of a regulous function $f$ on
$K^{n}$ is a closed (in the $K$-topology) constructible subset of
$K^{n}$. \hspace*{\fill} $\Box$
\end{corollary}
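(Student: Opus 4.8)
The plan is to separate the two assertions---closedness in the $K$-topology and constructibility---and handle them independently. Since a regulous function is by definition of class $\mathcal{C}^{0}$, hence continuous, the zero set $\mathcal{Z}(f) = f^{-1}(\{0\})$ is the preimage of the closed point $0 \in K$ and is therefore closed in the $K$-topology. This disposes of the first half with no further work, so the whole question reduces to showing that $\mathcal{Z}(f)$ lies in the Boolean algebra generated by the Zariski closed subsets of $K^{n}$.

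For constructibility I would invoke the filtration furnished by Corollary~\ref{filtration-cor-2}: there is a chain of Zariski closed subsets
$$ \emptyset = E_{-1} \subset E_{0} \subset \cdots \subset E_{n} = K^{n} $$
such that the restriction of $f$ to each locally closed stratum $E_{i} \setminus E_{i-1}$ is a \emph{regular} function (and, although we shall not need it, each stratum is smooth of pure dimension $i$). The key point is that a regular function is continuous for the Zariski topology, so its zero locus on the stratum $E_{i} \setminus E_{i-1}$ is Zariski closed in that stratum. A Zariski closed subset of a Zariski locally closed set is again Zariski locally closed in $K^{n}$, and in particular constructible.

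With this in hand I would simply decompose
$$ \mathcal{Z}(f) = \bigcup_{i=0}^{n} \left( \mathcal{Z}(f) \cap (E_{i} \setminus E_{i-1}) \right), $$
each summand being the zero set of the regular function $f|_{E_{i} \setminus E_{i-1}}$ and hence constructible by the preceding paragraph. Since a finite union of constructible sets is constructible, $\mathcal{Z}(f)$ is constructible, and being also $K$-closed, it is a closed constructible subset of $K^{n}$, as required.

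In truth there is no serious obstacle once Corollary~\ref{filtration-cor-2} is available: the entire content is the stratification, which reduces a global regulous function to finitely many genuinely regular pieces, and on the regular pieces the statement is the classical fact that zero loci of regular functions are Zariski closed. The only mild care needed is the bookkeeping observation that a Zariski closed subset of a locally closed stratum is globally Zariski locally closed in $K^{n}$, so that each piece indeed lands in the Boolean algebra generated by Zariski closed sets.
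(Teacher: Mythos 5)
Your proof is correct and follows exactly the route the paper intends: Corollary~\ref{filtration-cor-4} is stated there as an immediate consequence of the stratification in Corollary~\ref{filtration-cor-2}, with continuity of $f$ giving $K$-closedness and the zero loci of the regular restrictions $f|_{E_{i} \setminus E_{i-1}}$ giving constructibility stratum by stratum. Your write-up merely makes explicit the bookkeeping the paper leaves to the reader.
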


\begin{corollary}\label{filtration-cor-5}
The zero set $\mathcal{Z}(f_{1},\ldots,f_{p})$ of finitely many
regulous functions $f_{1},\ldots,f_{p}$ on $K^{n}$ is a closed (in
the $K$-topology) constructible subset of $K^{n}$.
\end{corollary}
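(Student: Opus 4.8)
The plan is to reduce the statement to the single-function case already settled in Corollary~\ref{filtration-cor-4}. First I would observe that
$$ \mathcal{Z}(f_{1},\ldots,f_{p}) = \bigcap_{j=1}^{p} \mathcal{Z}(f_{j}), $$
so that it suffices to check that a finite intersection of closed (in the $K$-topology) constructible subsets of $K^{n}$ is again closed and constructible. Closedness in the $K$-topology is clear, since a finite intersection of closed sets is closed; and constructibility is immediate from the definition, because the constructible subsets of $K^{n}$ are precisely the finite Boolean combinations of Zariski closed sets and hence form a Boolean algebra, stable under finite intersection. Applying Corollary~\ref{filtration-cor-4} to each $f_{j}$ separately then finishes the argument.

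Alternatively, and more in the spirit of the non-algebraically-closed setting exploited throughout these sections, I would reduce directly to a \emph{single} regulous function. Since $K$ is assumed not to be algebraically closed, Lemma~\ref{zero-lemma} furnishes a polynomial $G_{p}(x_{1},\ldots,x_{p})$ whose only zero on $K^{p}$ is the origin. Setting
$$ g := G_{p}(f_{1},\ldots,f_{p}), $$
I would note that $g$ is again a regulous function on $K^{n}$: it is a polynomial combination of the $f_{j}$, and the regulous functions form a ring, so $g$ is continuous and restricts to a regular function on the intersection of the regular loci of the $f_{j}$, which is a Zariski dense open subset of $K^{n}$. By the defining property of $G_{p}$ one has $\mathcal{Z}(g) = \mathcal{Z}(f_{1},\ldots,f_{p})$, whence Corollary~\ref{filtration-cor-4} applied to the single function $g$ yields the claim.

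There is no genuine obstacle here: the entire content of the corollary resides in the single-function case (Corollary~\ref{filtration-cor-4}), which in turn rests on the filtration of Corollary~\ref{filtration-cor-2} together with the density property. The only points deserving a word of justification are the stability of the constructible sets under finite intersection in the first approach, and the verification that $G_{p}(f_{1},\ldots,f_{p})$ is regulous in the second; both are routine, so I would keep the proof to a single short paragraph, most likely choosing the reduction to one function via Lemma~\ref{zero-lemma} for uniformity with the rest of the paper.
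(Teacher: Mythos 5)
Your second argument --- setting $g := G_{p}(f_{1},\ldots,f_{p})$ via Lemma~\ref{zero-lemma}, checking that $g$ is regulous, and applying Corollary~\ref{filtration-cor-4} to this single function --- is precisely the paper's proof, which cites exactly these two results. Your first alternative (intersecting the sets $\mathcal{Z}(f_{j})$ and noting that closed constructible subsets are stable under finite intersection) is also correct, and would even work without the standing assumption that $K$ is not algebraically closed, but in this section that assumption is in force anyway, so both routes go through.
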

\begin{proof} This follows directly from
Corollary~\ref{filtration-cor-4} and Lemma~\ref{zero-lemma}.
\end{proof}

Hence and by Proposition~\ref{construct-top}, we immediately
obtain

\begin{proposition}\label{regul-top}
The $k$-regulous topology on $K^{n}$ is noetherian.
\hspace*{\fill} $\Box$
\end{proposition}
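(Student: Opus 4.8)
The plan is to show that the $k$-regulous topology is \emph{coarser} than the constructible topology—that is, that every $k$-regulous closed subset of $K^{n}$ is already closed and constructible—and then to deduce noetherianity of the former from that of the latter (Proposition~\ref{construct-top}). Since a $k$-regulous function is in particular continuous and regular on a Zariski dense open subset, it is regulous, so Corollaries~\ref{filtration-cor-4} and~\ref{filtration-cor-5} apply verbatim to $k$-regulous functions.

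First I would take an arbitrary $k$-regulous closed set $V = \mathcal{Z}(E)$ with $E \subset \mathcal{R}^{k}(K^{n})$, where $E$ may be infinite, and write $V = \bigcap_{f \in E} \mathcal{Z}(f)$. By Corollary~\ref{filtration-cor-4} each $\mathcal{Z}(f)$ is a closed constructible subset, and by Corollary~\ref{filtration-cor-5} every finite subintersection $\bigcap_{f \in S} \mathcal{Z}(f)$, with $S \subseteq E$ finite, is again closed and constructible. I would then consider the family of all such finite subintersections. Using the multi-index invariant $\mu(\cdot) \in \mathbb{N}^{n+1}$ from the proof of Proposition~\ref{construct-top}, which takes values in the lexicographically well-ordered set $\mathbb{N}^{n+1}$ and satisfies $\mu(D) = \mu(E)$ iff $D = E$ for closed constructible $D \subseteq E$, I would select a finite subintersection $M = \bigcap_{f \in S_{0}} \mathcal{Z}(f)$ of least $\mu$-value. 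For any $g \in E$ the set $M \cap \mathcal{Z}(g)$ is another finite subintersection contained in $M$, and the minimality of $\mu(M)$ forces $M \cap \mathcal{Z}(g) = M$, so $M \subseteq \mathcal{Z}(g)$. This yields $M \subseteq \bigcap_{g \in E} \mathcal{Z}(g) = V \subseteq M$, whence $V = M$ is closed and constructible.

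Having established that every $k$-regulous closed set is a closed constructible set, I would finish by observing that any descending sequence of $k$-regulous closed subsets of $K^{n}$ is then a descending sequence of closed constructible subsets, which stabilizes by Proposition~\ref{construct-top}. The main obstacle is the passage from the possibly infinite defining family $E$ to a finite one, and this is exactly where the noetherianity of the constructible topology enters. Some care is needed to confirm that no circularity arises: the reduction of $V$ to a finite subintersection is carried out entirely within the already-established constructible topology, before any statement about the $k$-regulous topology is invoked.
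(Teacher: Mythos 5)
Your proof is correct and follows essentially the same route as the paper, which deduces the proposition directly from Corollaries~\ref{filtration-cor-4} and~\ref{filtration-cor-5} together with Proposition~\ref{construct-top}. The only difference is presentational: you spell out, via the invariant $\mu$ from the proof of Proposition~\ref{construct-top}, the reduction of an arbitrary defining family $E$ to a finite subintersection --- a step the paper leaves implicit, since the closed constructible sets are asserted there to form a (noetherian) topology and are therefore already closed under arbitrary intersections.
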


\begin{corollary}\label{regul-prop-cor-1}
Every $k$-regulous closed subset of $K^{n}$ is the zero set
$\mathcal{Z}\, (f)$ of a $k$-regulous function $f$ on $K^{n}$, and
thus is a closed (in the $K$-topology) constructible subset of
$K^{n}$. Hence every $k$-regulous open subset of $K^{n}$ is
 of the form
$$ \mathcal{D}\, (f) := K^{n} \setminus \mathcal{Z}\, (f) =
   \{ x \in K^{n}: \; f(x) \neq 0 \} $$
for a $k$-regulous function $f$ on $K^{n}$.  \hspace*{\fill}
$\Box$
\end{corollary}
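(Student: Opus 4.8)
The plan is to reduce the statement to two standard moves: first collapse the (a priori infinite) defining family $E$ to a finite one using the noetherianity of the $k$-regulous topology, and then collapse finitely many $k$-regulous functions into a single one with the same common zero set, using the polynomial $G_{m}$ of Lemma~\ref{zero-lemma} that detects the origin of $K^{m}$. Throughout I would use that every $k$-regulous function is in particular $0$-regulous, hence regulous (being of class $\mathcal{C}^{k}$ it is continuous, and it is regular on a Zariski dense open set), so that the earlier results on zero sets of regulous functions apply verbatim.

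First I would carry out the reduction to finitely many functions. Writing a $k$-regulous closed set as $V = \mathcal{Z}(E) = \bigcap_{g \in E} \mathcal{Z}(g)$, I would consider the collection of all finite subintersections $\mathcal{Z}(f_{1},\ldots,f_{m})$ with $f_{1},\ldots,f_{m} \in E$; each is $k$-regulous closed and contains $V$. By Proposition~\ref{regul-top} the $k$-regulous topology is noetherian, so this nonempty family of closed sets has a minimal element $\mathcal{Z}(f_{1},\ldots,f_{m})$. Were this minimal set to contain a point $x$ outside $V$, some $g \in E$ would satisfy $g(x) \neq 0$, and then $\mathcal{Z}(f_{1},\ldots,f_{m},g)$ would be strictly smaller, contradicting minimality; hence $V = \mathcal{Z}(f_{1},\ldots,f_{m})$.

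Next I would replace these finitely many functions by a single one. Taking $G_{m}(x_{1},\ldots,x_{m})$ from Lemma~\ref{zero-lemma}, whose only zero on $K^{m}$ is the origin, I set $f := G_{m}(f_{1},\ldots,f_{m})$. Since $(f_{1},\ldots,f_{m}): K^{n} \to K^{m}$ is a $k$-regulous map and $G_{m}: K^{m} \to K$ is regular, hence $k$-regulous, their composite $f$ is $k$-regulous by Corollary~\ref{filtration-cor-3} (equivalently, $f$ is a polynomial combination of elements of the ring $\mathcal{R}^{k}(K^{n})$). By the choice of $G_{m}$, one has $f(x) = 0$ precisely when $f_{1}(x) = \cdots = f_{m}(x) = 0$, so $\mathcal{Z}(f) = \mathcal{Z}(f_{1},\ldots,f_{m}) = V$. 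As $f$ is regulous, Corollary~\ref{filtration-cor-4} then shows that $V = \mathcal{Z}(f)$ is a closed (in the $K$-topology) constructible subset of $K^{n}$, and passing to complements yields the description of $k$-regulous open subsets as the sets $\mathcal{D}(f)$.

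The main obstacle I anticipate is the passage from an arbitrary family $E$ to a finite subfamily: this is exactly where the noetherianity of the $k$-regulous topology (Proposition~\ref{regul-top}) is indispensable, and one must check carefully that the minimal finite subintersection really recovers all of $\mathcal{Z}(E)$ rather than merely some closed set containing it. The collapse to a single function is then a routine application of Lemma~\ref{zero-lemma}, and works only because $K$ is assumed not algebraically closed; the constructibility is immediate from the already-established Corollary~\ref{filtration-cor-4} (or, without collapsing, directly from Corollary~\ref{filtration-cor-5}).
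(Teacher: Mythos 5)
Your proof is correct and follows essentially the same route the paper intends: since the corollary is stated as an immediate consequence of the preceding results, the implicit argument is exactly your chain---noetherianity of the $k$-regulous topology (Proposition~\ref{regul-top}) to replace the arbitrary family $E$ by a finite subfamily via a minimal-element argument, Lemma~\ref{zero-lemma} to collapse $f_{1},\ldots,f_{m}$ into the single $k$-regulous function $G_{m}(f_{1},\ldots,f_{m})$, and Corollary~\ref{filtration-cor-4} for the constructibility of its zero set. Your explicit verification that the minimal finite subintersection equals $\mathcal{Z}(E)$, and the remark that the collapse step is where non-algebraic-closedness of $K$ enters, fill in the details exactly as the paper's setup requires.
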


Corollaries~\ref{regul-prop-cor-1} and~\ref{filtration-cor-3}
yield the following

\begin{corollary}\label{regul-prop-cor-2}
Every $k$-regulous map $f: K^{n} \to K^{m}$ is continuous in the
$k$-regulous topology.  \hspace*{\fill} $\Box$
\end{corollary}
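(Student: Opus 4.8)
The plan is to reduce the continuity of $f = (f_{1},\ldots,f_{m})$ to the statement that preimages of generating closed sets are again regulous closed, and then to identify such preimages as zero sets of compositions. First I would observe that, in order to prove $f$ continuous in the $k$-regulous topology, it suffices to check that the preimage under $f$ of every $k$-regulous closed subset of $K^{m}$ is $k$-regulous closed in $K^{n}$.

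The key simplification comes from Corollary~\ref{regul-prop-cor-1}: every $k$-regulous closed subset $V$ of $K^{m}$ is the zero set $V = \mathcal{Z}(g)$ of a \emph{single} $k$-regulous function $g$ on $K^{m}$. This collapses an a priori infinite defining family of functions down to one, so that the preimage can be rewritten as
$$ f^{-1}(V) = f^{-1}(\mathcal{Z}(g)) = \{ x \in K^{n}: \ g(f(x)) = 0 \} = \mathcal{Z}(g \circ f). $$

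Next I would invoke Corollary~\ref{filtration-cor-3}, which asserts that the composition of $k$-regulous maps is again $k$-regulous. Since $f: K^{n} \to K^{m}$ is $k$-regulous by hypothesis and $g: K^{m} \to K$ is $k$-regulous, the composite $g \circ f: K^{n} \to K$ is a $k$-regulous function on $K^{n}$. By the very definition of the $k$-regulous topology, its zero set $\mathcal{Z}(g \circ f)$ is therefore $k$-regulous closed in $K^{n}$. As $V$ was an arbitrary $k$-regulous closed subset of $K^{m}$, this shows that $f^{-1}$ carries $k$-regulous closed sets to $k$-regulous closed sets, which is exactly the desired continuity.

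I do not expect a serious obstacle here: the entire content is already packaged in the two cited corollaries, and what remains is the formal manipulation above. The only point deserving a moment's care is the reduction furnished by Corollary~\ref{regul-prop-cor-1} from an arbitrary generating family to a single function, which itself rests on Lemma~\ref{zero-lemma} (over a non-algebraically closed $K$ a finite system of equations can be replaced by one) together with the noetherianity of the $k$-regulous topology; once that is granted, continuity follows immediately.
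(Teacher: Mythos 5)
Your proof is correct and follows exactly the route the paper intends: the paper derives this corollary in one line from Corollary~\ref{regul-prop-cor-1} (every $k$-regulous closed set is the zero set of a single $k$-regulous function) and Corollary~\ref{filtration-cor-3} (compositions of $k$-regulous maps are $k$-regulous), which are precisely the two ingredients you combine via $f^{-1}(\mathcal{Z}(g)) = \mathcal{Z}(g \circ f)$. Your write-up merely makes explicit the manipulation the paper leaves to the reader.
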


\section{Regulous Nullstellensatz}
We further assume that the ground field $K$ is a
Henselian rank one valued field of equicharacteristic zero and
that $K$ is not algebraically closed. Throughout this section, $k$
will be a non-negative integer. We begin with the following consequence of the
\L{}ojasiewicz inequality (Proposition~\ref{L-2}).

\begin{proposition}\label{Loj-prop}
Let $f,g$ be rational functions on $K\mathbb{A}^{n}$ such that $f$
extends to a continuous function on $K^{n}$ and $g$ extends to a
continuous function on the set $\mathcal{D}\, (f)$. Then the
function $f^{s} g$ extends, for $s \gg 0$, by zero through the set
$\mathcal{Z}\, (f)$ to a continuous rational function on $K^{n}$.
\end{proposition}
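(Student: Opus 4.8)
The plan is to reduce the statement to a direct application of the \L{}ojasiewicz inequality (Proposition~\ref{L-2}), after suitably arranging the domains on which $f$ and $g$ are continuous. The key observation is that $\mathcal{D}\,(f) = \{x \in K^{n}: f(x) \neq 0\}$ is an open subset of $K^{n}$ in the $K$-topology on which both $f$ and $g$ extend continuously, while the assertion concerns the behavior of the product $f^{s}g$ near the zero set $\mathcal{Z}\,(f)$.

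First I would fix a representation $g = P/Q$ with $P,Q$ polynomials and set up the following comparison: I want to show that for $s \gg 0$ the valuation $v(f^{s}(x)g(x))$ tends to $+\infty$ as $x$ approaches $\mathcal{Z}\,(f)$, so that $f^{s}g$ extends by zero. The natural strategy is to apply Proposition~\ref{L-2} on a locally closed subset. Concretely, I would work on sets of the form $A := U \cap F$, where $U$ is an open $\mathcal{L}$-definable set on which $g$ is continuous and $F$ is a closed $\mathcal{L}$-definable set, chosen so that $f$ is continuous on $A$ and the product is being controlled near the boundary $\mathcal{Z}\,(f)$. The inclusion hypothesis of Proposition~\ref{L-2}, namely $\{g=0\} \subset \{f=0\}$ on $A$, will in our situation be arranged by passing to an appropriate denominator comparison: the zero set of $Q$ (the pole locus of $g$) is contained in $\mathcal{Z}\,(f)$, since $g$ is assumed continuous exactly on $\mathcal{D}\,(f)$.

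The heart of the argument is then the valuation estimate. Applying the \L{}ojasiewicz inequality in the form yielded by Proposition~\ref{L-2}, I obtain a positive integer $s$ and a bound of the type $v(Q(x)) < s\cdot v(f(x))$ (equivalently $v(Q) < v(f^{s})$) whenever $v(f(x))$ is large, i.e.\ whenever $x$ is close to $\mathcal{Z}\,(f)$. Combined with continuity of $P$ and boundedness of $v(P)$ near a fixed point of $\mathcal{Z}\,(f)$, this forces $v(f^{s}(x)g(x)) = v(f^{s}(x)) + v(P(x)) - v(Q(x)) \to \infty$ as $x \to \mathcal{Z}\,(f)$. Hence the quotient extends by zero continuously through $\mathcal{Z}\,(f)$, exactly as in the final step of the proof of Proposition~\ref{L-1}.

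The main obstacle I anticipate is the careful bookkeeping needed to apply Proposition~\ref{L-2} correctly: that proposition requires $f,g$ to be \emph{continuous} $\mathcal{L}$-definable functions on the locally closed set $A$, whereas here $g$ is only continuous on the open set $\mathcal{D}\,(f)$ and may have poles precisely along $\mathcal{Z}\,(f)$. The delicate point is therefore to choose the right locally closed $A$ (engineering $U$ and $F$ so that both the numerator-type and denominator-type functions entering the \L{}ojasiewicz estimate are genuinely continuous and $\mathcal{L}$-definable on $A$) and to verify that the resulting valuation inequality, which holds on each piece $A$, patches to a single uniform exponent $s$ valid in a full $K$-neighborhood of $\mathcal{Z}\,(f)$. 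Once the domains are set up correctly, the valuation estimate and the extension-by-zero are routine, mirroring the arguments already carried out for Propositions~\ref{L-1} and~\ref{L-2}.
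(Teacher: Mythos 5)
There is a genuine gap, and it sits exactly where you dismiss the difficulty as ``careful bookkeeping'': your claim that, for a polynomial representation $g = P/Q$, the zero set of $Q$ is contained in $\mathcal{Z}\,(f)$ ``since $g$ is assumed continuous exactly on $\mathcal{D}\,(f)$.'' This conflates continuity with regularity. Over a non-algebraically closed field a rational function can extend continuously to points at which it is not regular, and at such points \emph{every} representation of it has vanishing denominator; this is precisely the phenomenon that regulous functions exhibit. Concretely, with $G_{2}$ and $d$ as in Lemma~\ref{zero-lemma}, take $n=3$, $f = x_{3}$ and $g = x_{1}^{d+1}/G_{2}(x_{1},x_{2})$: one checks (using the boundedness of $v(G_{2}(x_{1},x_{2})) - d\min(v(x_{1}),v(x_{2}))$, as in Lemma~\ref{aux-lemma}) that $g$ is continuous on all of $K^{3}$, yet it is not regular along the $x_{3}$-axis, and every representation $g=P/Q$ has $G_{2}\mid Q$, so $Q$ vanishes on that axis. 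Thus $\mathcal{Z}\,(Q)$ meets $\mathcal{D}\,(f)$ in the punctured axis, which accumulates at the origin, a point of $\mathcal{Z}\,(f)$. Consequently, on no locally closed set $A$ capturing the points of $\mathcal{D}\,(f)$ near the origin does the hypothesis of Proposition~\ref{L-2} hold for the pair $(f,Q)$, and no inequality of the form $v(Q) < s\, v(f) + \beta$ can hold there, since $v(Q)=\infty$ at points where $v(f)$ is bounded. Your decomposition $v(f^{s}g) = s\,v(f) + v(P) - v(Q)$ with separate estimates on $v(P)$ and $v(Q)$ therefore fails: near such points $v(P)$ and $v(Q)$ blow up simultaneously in a compensating way that separate bounds cannot see. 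No choice of $U$ and $F$ repairs this; the obstruction is geometric, not organizational.

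This is exactly why the paper does not argue on $K^{n}$ directly. Its proof first takes a finite composite $\sigma: M \to K\mathbb{A}^{n}$ of blow-ups along smooth centers so that $f^{\sigma}$ is regular on $M(K)$ and $g^{\sigma}$ is regular at all $K$-points of $M(K)\setminus\sigma^{-1}(\mathcal{Z}\,(f))$. Only after this resolution can one write $g^{\sigma} = p/q$ with $p,q$ regular and $\mathcal{Z}\,(q) \subset \mathcal{Z}\,(f^{\sigma})$ --- the inclusion your argument needs but cannot obtain downstairs. Proposition~\ref{L-2} is then applied on $M(K)$ to $f^{\sigma}$ and $q$, showing that $(f^{\sigma})^{s}/q$, hence $(f^{\sigma})^{s}g^{\sigma}$, extends by zero through $\mathcal{Z}\,(f^{\sigma})$ for $s \gg 0$. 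Finally --- and this step is also absent from your proposal --- the descent property (Corollary~\ref{clo-th-cor-4}), which rests on the closedness theorem, is what carries the resulting continuous function back down to a continuous function on $K^{n}$ vanishing on $\mathcal{Z}\,(f)$. Your valuation estimate at the end is the correct local statement, but it becomes provable only after resolution, and its conclusion returns to $K^{n}$ only via descent.
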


\begin{proof} We can find a finite composite $\sigma: M \to K\mathbb{A}^{n}$
of blow-ups along smooth centers such that the pull-backs
$$ f^{\sigma} := f \circ \sigma \ \ \ \text{and} \ \ \ g^{\sigma}
   := g \circ \sigma $$
are regular functions at all $K$-points on
$$ M(K) \ \ \ \text{and} \ \ \ M(K) \setminus \sigma^{-1}(\mathcal{Z}\, (f)), $$
respectively. Then there are regular functions $p,q$ on $M$ such
that
$$ g^{\sigma} = \frac{p}{q} \ \ \ \text{and} \ \ \ \mathcal{Z}\,
  (q) := \{ y \in M(K): \; q(y) =0 \} \subset \mathcal{Z}\,
  (f^{\sigma}). $$
It follows immediately from Proposition~\ref{L-2} that the
rational function
$$ \frac{(f^{\sigma})^{s}}{q}\,, \ \ \  \text{for} \ \ s \gg 0, $$
extends by zero through the set $\mathcal{Z}\, (f^{\sigma})$ to a
continuous function on $M(K)$, whence so does the rational
function $(f^{\sigma})^{s} \cdot g^{\sigma}$. By the descent
property (Corollary~\ref{clo-th-cor-4}), the continuous function
$(f^{\sigma})^{s} \cdot g^{\sigma}$ descends to a continuous
function on $K^{n}$ that vanishes on $\mathcal{Z}\, (f)$. This is
the required result.
\end{proof}

The two corollaries stated below are counterparts of Lemmata~5.1
and~5.2 from~\cite{FHMM}, established over the real ground field
$\mathbb{R}$.

\begin{corollary}\label{Loj-prop-cor-1}
Let $f$ be a $k$-regulous function on $K^{n}$ and $g$ a
$k$-regulous function on the open subset $\mathcal{D}\, (f)$. Then
the function $f^{s} g$, for $s \gg 0$, extends by zero through the
zero set $\mathcal{Z}\, (f)$ to a $k$-regulous function on
$K^{n}$. Hence the ring of $k$-regulous functions on
$\mathcal{D}\, (f)$ is the localization
$\mathcal{R}^{k}(K^{n})_{f}$.
\end{corollary}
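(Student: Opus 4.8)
The plan is to split the assertion into two parts. The substantive one is that, for $s \gg 0$, the function $f^{s}g$ extends by zero across $\mathcal{Z}(f)$ to a \emph{$k$-regulous} function on $K^{n}$; the localization statement $\mathcal{R}^{k}(\mathcal{D}(f)) = \mathcal{R}^{k}(K^{n})_{f}$ will then follow formally, using the density property.

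First I would record that a $k$-regulous function is in particular a continuous rational function. Thus $f$ is continuous rational on $K^{n}$ and $g$ is continuous rational on $\mathcal{D}(f)$, so Proposition~\ref{Loj-prop} applies directly and produces, for $s \gg 0$, a continuous rational function $h$ on $K^{n}$ obtained by extending $f^{s}g$ by zero through $\mathcal{Z}(f)$. Since $h$ coincides with $f^{s}g$ on a Zariski dense open subset on which both $f$ and $g$ are regular, $h$ is regular there; the ``regular on a Zariski dense open set'' half of $k$-regularity is therefore automatic, and the whole point is to upgrade $h$ from continuous to class $\mathcal{C}^{k}$.

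This upgrade is the step I expect to be the main obstacle, and it is genuinely local along $\mathcal{Z}(f)$: on the open set $\mathcal{D}(f)$ the function $h = f^{s}g$ is already $\mathcal{C}^{k}$, being a product of $\mathcal{C}^{k}$ functions. The idea is to apply the \L{}ojasiewicz mechanism of Proposition~\ref{Loj-prop} not only to $g$ but to each of its partial derivatives $\partial^{\alpha}g$ with $|\alpha| \leq k$: each $\partial^{\alpha}g$ is a rational function that extends continuously to $\mathcal{D}(f)$ (since $g$ is $\mathcal{C}^{k}$ there), whence, after choosing $s$ large enough uniformly over the finitely many $\alpha$ with $|\alpha| \leq k$, every product $f^{s}\,\partial^{\alpha}g$ extends by zero to a continuous function on $K^{n}$. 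Expanding $\partial^{\beta}(f^{s}g)$ for $|\beta| \leq k$ by the Leibniz rule, each summand is a product of a derivative of $f^{s}$ --- which contains a factor $f^{s-|\beta-\gamma|}$ times derivatives of $f$ bounded near $\mathcal{Z}(f)$, as $f$ is $\mathcal{C}^{k}$ on $K^{n}$ --- with some $\partial^{\gamma}g$, $|\gamma| \leq k$; after increasing $s$ all these summands tend to $0$ along $\mathcal{Z}(f)$. Hence every partial derivative of $h$ up to order $k$ extends continuously by zero across $\mathcal{Z}(f)$, and I would conclude that $h$ is of class $\mathcal{C}^{k}$. The delicate point is precisely this last inference: passing from ``continuous, $\mathcal{C}^{k}$ on a dense open set, with all partials up to order $k$ extending continuously by zero'' to ``genuinely $\mathcal{C}^{k}$'' requires controlling differentiability across the boundary $\mathcal{Z}(f)$ and is where the non-archimedean nature of $K$ has to be handled with care.

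Granting the first part, the localization statement is formal. The natural homomorphism $\mathcal{R}^{k}(K^{n})_{f} \to \mathcal{R}^{k}(\mathcal{D}(f))$ is well defined because $1/f$ is $k$-regulous on $\mathcal{D}(f)$, being the reciprocal of a nonvanishing $\mathcal{C}^{k}$ function that is regular on a Zariski dense open subset. Surjectivity is exactly the first part: any $g \in \mathcal{R}^{k}(\mathcal{D}(f))$ can be written $g = h/f^{s}$ on $\mathcal{D}(f)$ with $h \in \mathcal{R}^{k}(K^{n})$. For injectivity one uses that $\mathcal{R}^{k}(K^{n})$ is a domain (two $k$-regulous functions agreeing with regular functions on a common Zariski dense open set cannot have zero product unless one of them vanishes there) and that $\mathcal{D}(f)$ is dense in $K^{n}$ in the $K$-topology by the density property: if a numerator vanishes on $\mathcal{D}(f)$ then, being continuous, it vanishes on all of $K^{n}$. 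This completes the argument.
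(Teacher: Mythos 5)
Your proposal takes essentially the same route as the paper's own proof: the case $k=0$ is exactly Proposition~\ref{Loj-prop}, and for general $k$ the paper likewise chooses $s$ large enough that the products $f^{s}\,\partial^{\alpha}g$, $|\alpha|\leq k$, extend continuously by zero across $\mathcal{Z}(f)$, and then invokes Leibniz's rule to conclude that $f^{s+k}g$ is $k$-regulous and $k$-flat on $\mathcal{Z}(f)$. The ``delicate point'' you flag (passing from continuity of all partials up to order $k$ to genuine $\mathcal{C}^{k}$ smoothness across $\mathcal{Z}(f)$) is asserted in the paper with no more justification than you give, and the localization statement is likewise left there as a formal consequence, so your write-up is, if anything, more detailed than the original.
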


\begin{proof}
The case $k=0$ is just Proposition~\ref{Loj-prop}. Now take $s$
large enough so that the partial derivatives
$$ f^{s} \cdot \frac{\partial^{|\alpha|}g}{\partial x^{\alpha}} \ \ \
   \mbox{for} \ \ \alpha \in \mathbb{N}^{n}, \ |\alpha|=0,1,\ldots,k, $$
extend to continuous functions on $K^{n}$ vanishing on
$\mathcal{Z}\, (f)$. Then, by Leibniz's rule, $f^{s+k} g$ is a
$k$-regulous function on $K^{n}$ and $k$-flat on $\mathcal{Z}\,
(f)$, as desired.
\end{proof}

\begin{corollary}\label{Loj-prop-cor-2}
Let $U$ be a $k$-regulous open subset of $K^{n}$, $f$ a
$k$-regulous function on $U$ and $g$ a $k$-regulous function on
the open subset $\mathcal{D}\, (f) \subset U$. Then the function
$f^{s} g$ extends, for $s \gg 0$, by zero through the zero set
$\mathcal{Z}\, (f) \subset U$ to a $k$-regulous function on $U$.
\end{corollary}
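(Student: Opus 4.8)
The plan is to reduce this localized statement to the already-established global version, Corollary~\ref{Loj-prop-cor-1}, by using the fact that a $k$-regulous open set is the non-vanishing locus of a single global $k$-regulous function. First I would apply Corollary~\ref{regul-prop-cor-1} to write the $k$-regulous open set in the form $U = \mathcal{D}(h)$ for some $k$-regulous function $h$ on $K^n$. By the localization part of Corollary~\ref{Loj-prop-cor-1}, the ring of $k$-regulous functions on $U = \mathcal{D}(h)$ is the localization $\mathcal{R}^{k}(K^{n})_{h}$; in particular, for $a \gg 0$ the function $h^{a} f$ (which is $k$-regulous on $\mathcal{D}(h)=U$) extends by zero through $\mathcal{Z}(h)$ to a $k$-regulous function
$$ F := h^{a} f \in \mathcal{R}^{k}(K^{n}), $$
and moreover $h^{-m}$ is $k$-regulous on $U$ for every $m \in \mathbb{N}$. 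Since $h$ does not vanish on $U$ while $F$ vanishes identically on $\mathcal{Z}(h) = K^{n} \setminus U$, one checks directly that
$$ \mathcal{D}(F) = \mathcal{D}(f) \quad \text{and} \quad \mathcal{Z}(F) \cap U = \mathcal{Z}(f). $$

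Next I would observe that $g$ is a $k$-regulous function on $\mathcal{D}(f) = \mathcal{D}(F)$, where now $F$ is a genuinely global $k$-regulous function on $K^{n}$. Applying Corollary~\ref{Loj-prop-cor-1} to the pair $(F,g)$ yields, for $s \gg 0$, a $k$-regulous function $G$ on $K^{n}$ which is the extension by zero through $\mathcal{Z}(F)$ of $F^{s} g$. On $U$ we have $F = h^{a} f$, and hence $F^{s} g = h^{as} f^{s} g$ on $\mathcal{D}(f)$.

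Finally I would form the candidate extension $\Phi := G \cdot h^{-as}$ on $U$. Since both the restriction $G|_{U}$ and the factor $h^{-as}$ are $k$-regulous on $U$, and since $k$-regulous functions on $U$ are closed under multiplication (the product is again of class $\mathcal{C}^{k}$ and regular on the intersection of the two Zariski dense regular loci), the function $\Phi$ is $k$-regulous on $U$; on $\mathcal{D}(f)$ it agrees with $f^{s} g$, and on $\mathcal{Z}(f) = \mathcal{Z}(F) \cap U$ it vanishes, because $G$ vanishes there while $h^{-as}$ stays finite. Thus $\Phi$ is precisely the sought $k$-regulous extension of $f^{s} g$ by zero through $\mathcal{Z}(f)$ inside $U$. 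The main conceptual step — and the only point requiring genuine input rather than bookkeeping — is the opening reduction via Corollary~\ref{regul-prop-cor-1} together with the localization statement of Corollary~\ref{Loj-prop-cor-1}, since this is exactly what converts the data $f,g$ living only over $U$ into global $k$-regulous functions $F,G$ on $K^{n}$ to which the global \L{}ojasiewicz-type result applies; everything after that is formal manipulation exploiting the non-vanishing of $h$ on $U$.
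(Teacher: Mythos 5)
Your proposal is correct and follows essentially the same route as the paper: write $U = \mathcal{D}(h)$ via Corollary~\ref{regul-prop-cor-1}, globalize $f$ to $h^{a}f \in \mathcal{R}^{k}(K^{n})$ using Corollary~\ref{Loj-prop-cor-1}, apply that corollary again to the pair $(h^{a}f, g)$ (the paper phrases this as $g \in \mathcal{R}^{k}(K^{n})_{fh^{s}}$), and then divide by the harmless power of $h$, which is invertible in the ring of $k$-regulous functions on $U$. Your write-up simply makes explicit the bookkeeping (the identities $\mathcal{D}(F)=\mathcal{D}(f)$, $\mathcal{Z}(F)\cap U=\mathcal{Z}(f)$, and the final verification) that the paper compresses into ``thus the conclusion follows.''
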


\begin{proof} By Corollary~\ref{regul-prop-cor-1}, $U = \mathcal{D}(h)$ for a $k$-regulous
function on $K^{n}$. From the above corollary we get
$$ fh^{s} \in \mathcal{R}^{k}(K^{n}) \ \ \ \text{for} \ \ s \gg 0. $$
Hence
$$ g \in \mathcal{R}^{k}(K^{n})_{fh^{s}} $$
for integers $s$ large enough, and thus the conclusion follows.
\end{proof}

Now we can readily pass to a regulous version of Nullstellensatz,
whose proof relies on Corollary~\ref{Loj-prop-cor-1} and the fact
that the $k$-regulous topology is noetherian.

\begin{theorem}\label{nullstellen}
If $I$ is an ideal in the ring $\mathcal{R}^{k}(K^{n})$ of
$k$-regulous functions on $K^{n}$, then
$$ {\rm Rad}\, (I) = \mathcal{I}\,(\mathcal{Z}\,(I)), $$
where
$$ \mathcal{I}\, (E) := \{ f \in \mathcal{R}^{k}(K^{n}): \; f(x)=0
   \ \ \text{for all } \ x \in E \} $$
for a subset $E$ of $K^{n}$.
\end{theorem}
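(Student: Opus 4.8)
The plan is to prove the two inclusions separately. The inclusion ${\rm Rad}\,(I) \subseteq \mathcal{I}(\mathcal{Z}(I))$ is immediate: if $f^m \in I$ then $f^m$ vanishes identically on $\mathcal{Z}(I)$, and since $K$ is a field, $f$ itself must vanish there, so $f \in \mathcal{I}(\mathcal{Z}(I))$. All the work lies in the reverse inclusion $\mathcal{I}(\mathcal{Z}(I)) \subseteq {\rm Rad}\,(I)$, so I fix $f \in \mathcal{I}(\mathcal{Z}(I))$ and aim to produce an exponent $s$ and a $k$-regulous multiplier realizing $f^s \in I$.

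First I would reduce the possibly infinite generating set of $I$ to a single function cutting out the same zero locus. The common zero set $\mathcal{Z}(I) = \bigcap_{h \in I} \mathcal{Z}(h)$ is an intersection of $k$-regulous closed sets; since the $k$-regulous topology on $K^n$ is noetherian (Proposition~\ref{regul-top}), the family of finite subintersections has a minimal element, which therefore equals the whole intersection. Thus $\mathcal{Z}(I) = \mathcal{Z}(g_1,\ldots,g_r)$ for finitely many $g_1,\ldots,g_r \in I$. Invoking Lemma~\ref{zero-lemma} (here the hypothesis that $K$ is not algebraically closed is essential), I set $g := G_r(g_1,\ldots,g_r)$, so that $\mathcal{Z}(g) = \mathcal{Z}(g_1,\ldots,g_r) = \mathcal{Z}(I)$ (cf.\ Corollary~\ref{filtration-cor-5}). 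Because the only zero of $G_r$ is the origin, $G_r$ has no constant term, every monomial of $G_r$ is divisible by some variable, and hence $g$ is an $\mathcal{R}^k(K^n)$-combination of the $g_i$; as $I$ is an ideal this gives $g \in I$.

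The decisive step is then a Rabinowitsch-type localization argument powered by the \L{}ojasiewicz inequality. Since $f$ vanishes on $\mathcal{Z}(I) = \mathcal{Z}(g)$, we have $\mathcal{D}(f) \subseteq \mathcal{D}(g)$, so $g$ is nowhere zero on $\mathcal{D}(f)$ and the reciprocal $1/g$ is a $k$-regulous function on the open set $\mathcal{D}(f)$. Applying Corollary~\ref{Loj-prop-cor-1} to the $k$-regulous function $f$ on $K^n$ and to $1/g$ on $\mathcal{D}(f)$, I obtain, for $s \gg 0$, that $f^s \cdot (1/g)$ extends by zero through $\mathcal{Z}(f)$ to a $k$-regulous function $u$ on all of $K^n$. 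By construction $u g = f^s$ on $\mathcal{D}(f)$, while on $\mathcal{Z}(f)$ both sides vanish; hence $f^s = u g$ identically on $K^n$. Since $u \in \mathcal{R}^k(K^n)$ and $g \in I$, this yields $f^s \in I$, i.e.\ $f \in {\rm Rad}\,(I)$, completing the reverse inclusion.

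The main obstacle is the passage from the ideal $I$ --- which need not be finitely generated, as $\mathcal{R}^k(K^n)$ is not known to be noetherian --- to the single function $g \in I$ with $\mathcal{Z}(g) = \mathcal{Z}(I)$; this is exactly where topological noetherianity of the $k$-regulous topology and the single-equation trick of Lemma~\ref{zero-lemma} must be combined, and verifying that the resulting $g$ actually lies in $I$ (not merely in its radical) requires the observation that $G_r$ has vanishing constant term. Once $g$ is in hand, the remaining analytic content is entirely supplied by the \L{}ojasiewicz-type Corollary~\ref{Loj-prop-cor-1}.
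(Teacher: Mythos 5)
Your proof is correct and takes essentially the same route as the paper's: produce a single $g \in I$ with $\mathcal{Z}(g) = \mathcal{Z}(I)$, note that $1/g$ is $k$-regulous on $\mathcal{D}(f)$, and apply Corollary~\ref{Loj-prop-cor-1} to conclude $f^{s} \in g \cdot \mathcal{R}^{k}(K^{n}) \subset I$. If anything, you are more careful than the paper at one point: the paper cites Corollary~\ref{regul-prop-cor-1} as directly furnishing $g \in I$, whereas that corollary only yields \emph{some} $k$-regulous function with the right zero set, and your explicit argument --- topological noetherianity to get finitely many $g_{i} \in I$, then $g = G_{r}(g_{1},\ldots,g_{r}) \in I$ because $G_{r}$ has no constant term --- is precisely the justification needed.
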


\begin{proof} The inclusion ${\rm Rad}\, (I) \subset \mathcal{I}\,(\mathcal{Z}\,(I))$
is obvious. For the converse one, apply
Corollary~\ref{regul-prop-cor-1} which says that there is a
function $g \in I$ such that $\mathcal{Z}\, (I) = \mathcal{Z}\,
(g)$. Then $\mathcal{Z}\, (g) \subset \mathcal{Z}\, (f)$ for any
$f \in \mathcal{I}\,(\mathcal{Z}\,(I))$, and thus the function
$1/g$ is $k$-regulous on the set $\mathcal{D}\, (f)$. By
Corollary~\ref{Loj-prop-cor-1}, we get
$$ \frac{f^{s}}{g} \in \mathcal{R}^{k}(K^{n}) $$
for $s \gg 0$ large enough. Hence
$$ f^{s} \in g \cdot \mathcal{R}^{k}(K^{n}) \subset I, $$
concluding the proof.
\end{proof}

\begin{corollary}\label{nullstellen-cor-1}
There is a one-to-one correspondence between the radical ideals of
the ring $\mathcal{R}^{k}(K^{n})$ and the $k$-regulous closed
subsets of $K^{n}$. Consequently, the prime ideals of
$\mathcal{R}^{k}(K^{n})$ correspond to the irreducible
$k$-regulous closed subsets of $K^{n}$, and the maximal ideals
$\mathfrak{m}$ of $\mathcal{R}^{k}(K^{n})$ correspond to the
points $x$ of $K^{n}$ so that we get the bijection
$$ K^{n} \ni x \longrightarrow \mathfrak{m}_{x} := \{ f \in \mathcal{R}^{k}(K^{n}):
   f(x)=0 \} \in \text{\rm Max} \left( \mathcal{R}^{k}(K^{n}) \right). $$
The resulting embedding
$$ \iota: K^{n} \ni x \longrightarrow \mathfrak{m}_{x} \in \text{\rm Spec}
   \left( \mathcal{R}^{k}(K^{n}) \right) $$
is continuous in the $k$-regulous and Zariski topologies.
Furthermore, $\iota$ induces a canonical one-to-one correspondence
between the $k$-regulous closed subsets of $K^{n}$ and the Zariski
closed subsets of $\text{\rm Spec} \left( \mathcal{R}^{k}(K^{n})
\right)$. More precisely, for every $k$-regulous closed subset $V$
of $K^{n}$ there is a unique Zariski closed subset $\widetilde{V}$
of $\mathcal{R}^{k}(K^{n})$ such that $V =
\iota^{-1}(\widetilde{V})$; actually $\widetilde{V}$ is the
Zariski closure of the image $\iota(V)$.
\end{corollary}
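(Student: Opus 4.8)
The plan is to set up the inclusion-reversing operators $\mathcal{Z}$ and $\mathcal{I}$ and show they are mutually inverse bijections, then transport every remaining assertion through this dictionary. First I would record that for any subset $E \subset K^{n}$ the ideal $\mathcal{I}(E)$ is automatically radical, since $K$ is a field (if $f^{m}(x)=0$ then $f(x)=0$); and that for any ideal $I$ the set $\mathcal{Z}(I) = \bigcap_{f\in I}\mathcal{Z}(f)$ is $k$-regulous closed. For the latter, noetherianity of the $k$-regulous topology (Proposition~\ref{regul-top}) reduces the intersection to a finite one $\mathcal{Z}(f_{1},\ldots,f_{p})$, and Lemma~\ref{zero-lemma} collapses this to $\mathcal{Z}(g)$ with $g = G_{p}(f_{1},\ldots,f_{p}) \in I$; this is exactly the input that Corollary~\ref{regul-prop-cor-1} supplies. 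The two round trips are then immediate: $\mathcal{I}(\mathcal{Z}(I)) = \mathrm{Rad}(I)$ is precisely Theorem~\ref{nullstellen}, so $\mathcal{I}\circ\mathcal{Z}=\mathrm{id}$ on radical ideals; and for a $k$-regulous closed $V=\mathcal{Z}(f)$ one has $f\in\mathcal{I}(V)$, whence $V\subset\mathcal{Z}(\mathcal{I}(V))\subset\mathcal{Z}(f)=V$, giving $\mathcal{Z}\circ\mathcal{I}=\mathrm{id}$ on closed sets. This establishes the first bijection.

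Next I would treat the prime/irreducible and maximal/point refinements. The usual argument shows a radical ideal is prime iff its zero set is irreducible: if $\mathcal{I}(V)$ is prime and $fg$ vanishes on $V$, then $V\subset\mathcal{Z}(f)\cup\mathcal{Z}(g)$ forces $V\subset\mathcal{Z}(f)$ or $V\subset\mathcal{Z}(g)$ by irreducibility, and conversely a decomposition $V=V_{1}\cup V_{2}$ into proper closed pieces yields $f_{i}\in\mathcal{I}(V_{i})\setminus\mathcal{I}(V)$ with $f_{1}f_{2}\in\mathcal{I}(V)$. For maximal ideals the key observation is that every singleton $\{x\}$ is $k$-regulous closed, indeed Zariski closed, being $\mathcal{Z}(G_{n}(x_{1}-a_{1},\ldots,x_{n}-a_{n}))$ for $x=(a_{1},\ldots,a_{n})$; hence under the order-reversing bijection the maximal proper radical ideals correspond to the minimal nonempty closed sets, which are exactly the singletons. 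Concretely $\mathfrak{m}_{x}=\ker(\mathrm{ev}_{x})$ is maximal because evaluation at $x$ is onto $K$, and conversely a maximal ideal $\mathfrak{m}=\mathcal{I}(\mathcal{Z}(\mathfrak{m}))$ has minimal nonempty zero set $\{x\}$, whence $\mathfrak{m}=\mathcal{I}(\{x\})=\mathfrak{m}_{x}$; injectivity of $x\mapsto\mathfrak{m}_{x}$ follows by applying $\mathcal{Z}$.

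Finally I would verify the statements about $\iota$. Continuity is a direct computation: for an ideal $\mathfrak{a}$ of $\mathcal{R}^{k}(K^{n})$ one has $\iota^{-1}(V(\mathfrak{a})) = \{x : \mathfrak{m}_{x}\supseteq\mathfrak{a}\} = \mathcal{Z}(\mathfrak{a})$, which is $k$-regulous closed, so $\iota$ carries the $k$-regulous topology continuously into the Zariski topology on the spectrum. For the induced correspondence I would send a $k$-regulous closed $V$ to $\widetilde{V}:=V(\mathcal{I}(V))$; then $\iota^{-1}(\widetilde{V})=\mathcal{Z}(\mathcal{I}(V))=V$, and since $\bigcap_{x\in V}\mathfrak{m}_{x}=\mathcal{I}(V)$ the set $\widetilde{V}$ is exactly the Zariski closure of $\iota(V)$. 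Uniqueness is where the Nullstellensatz is used once more: any Zariski closed $W=V(J)$ with $J$ radical and $\iota^{-1}(W)=V$ forces $\mathcal{Z}(J)=V$, hence $J=\mathcal{I}(V)$ by Theorem~\ref{nullstellen}, so $W=\widetilde{V}$.

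I anticipate the only genuinely non-formal step to be the identification of maximal ideals with points: unlike rings of continuous functions on non-compact spaces, here no "free" maximal ideals appear, and ruling them out rests squarely on the noetherianity of the $k$-regulous topology (so that minimal nonempty closed sets exist) together with the fact that singletons are closed, which in turn uses that $K$ is not algebraically closed via Lemma~\ref{zero-lemma}. Everything else is the formal Nullstellensatz dictionary combined with the standard bijection between radical ideals of a ring and Zariski closed subsets of its spectrum.
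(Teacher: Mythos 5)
Your proposal is correct and takes essentially the same route as the paper: the paper deduces the entire statement from the Nullstellensatz (Theorem~\ref{nullstellen}) together with Corollary~\ref{regul-prop-cor-1} and the fact that the Zariski closed subsets of $\text{\rm Spec}\left(\mathcal{R}^{k}(K^{n})\right)$ are exactly the loci of radical ideals, which is precisely the dictionary you spell out. The only difference is explicitness — the paper leaves the radical/prime/maximal correspondences as immediate consequences, while you verify them in detail (including the weak-Nullstellensatz point that proper ideals have nonempty zero sets, and the identification of minimal nonempty closed sets with singletons via Lemma~\ref{zero-lemma}).
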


\begin{proof}
The embedding $\iota$ is continuous by the very definition of the
Zariski topology. The last assertion follows immediately from the
Nullstellensatz and the fact that the closed subsets of $\text{\rm
Spec} \left( \mathcal{R}^{k}(K^{n}) \right)$ are precisely of the
form
$$ \{ \mathfrak{p} \in \text{\rm Spec}
   \left( \mathcal{R}^{k}(K^{n}) \right): \ \mathfrak{p} \supset I \}, $$
where $I$ runs over all radical ideals of $\text{\rm Spec} \left(
\mathcal{R}^{k}(K^{n}) \right)$.
\end{proof}

The above corollary along with Proposition~\ref{regul-top} and
Corollary~\ref{regul-prop-cor-1} yield immediately

\begin{corollary}\label{nullstellen-cor-1.5}
With the above notation, the space $\text{\rm Spec} \left(
\mathcal{R}^{k}(K^{n}) \right)$ with the Zariski topology is
noetherian, and the embedding $\iota$ induces a one-to-one
correspondence between the $k$-regulous open subsets of $K^{n}$
and the subsets of $\text{\rm Spec} \left( \mathcal{R}^{k}(K^{n})
\right)$ open in the Zariski topology. In particular, every open
subset of $\text{\rm Spec} \left( \mathcal{R}^{k}(K^{n}) \right)$
is of the form
$$ \mathcal{U}(f) := \{ \mathfrak{p} \in \text{\rm Spec}
   \left( \mathcal{R}^{k}(K^{n}) \right): \ f \not \in \mathfrak{p} \}, \ \ \
   f \in \mathcal{R}^{k}(K^{n}), $$
corresponding to the subset $\mathcal{D}(f)$ of $K^{n}$.
\hspace*{\fill} $\Box$
\end{corollary}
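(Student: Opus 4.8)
The plan is to read off all three assertions from the closed-set dictionary already set up in Corollary~\ref{nullstellen-cor-1}, feeding in noetherianity from Proposition~\ref{regul-top} and the explicit shape of $k$-regulous open sets from Corollary~\ref{regul-prop-cor-1}. Nothing new needs to be built; the work is to transport structure along the established bijection and to pin down one concrete identification.

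First I would record that the assignment $V \mapsto \widetilde{V}$ of Corollary~\ref{nullstellen-cor-1}, sending a $k$-regulous closed subset $V$ of $K^{n}$ to the Zariski closure $\widetilde{V}$ of $\iota(V)$ in $\text{\rm Spec}\,(\mathcal{R}^{k}(K^{n}))$, is an inclusion-preserving bijection onto the Zariski closed subsets of $\text{\rm Spec}\,(\mathcal{R}^{k}(K^{n}))$, with monotone inverse $\widetilde{V} \mapsto \iota^{-1}(\widetilde{V})$. Thus the two lattices of closed sets are order-isomorphic. Since the $k$-regulous topology is noetherian (Proposition~\ref{regul-top}), its lattice of closed sets satisfies the descending chain condition, and this condition transports across the order isomorphism to the Zariski closed subsets of $\text{\rm Spec}\,(\mathcal{R}^{k}(K^{n}))$; this is precisely the asserted noetherianity.

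Next, passing to complements in this bijection yields at once a one-to-one correspondence between the $k$-regulous open subsets of $K^{n}$ and the Zariski open subsets of $\text{\rm Spec}\,(\mathcal{R}^{k}(K^{n}))$, giving the second assertion. To make the correspondence concrete I would invoke Corollary~\ref{regul-prop-cor-1}: every $k$-regulous open set has the form $\mathcal{D}(f) = K^{n} \setminus \mathcal{Z}(f)$ for some $f \in \mathcal{R}^{k}(K^{n})$. Since $\iota(x) = \mathfrak{m}_{x} = \{ h : h(x)=0 \}$, one has $x \in \mathcal{D}(f)$ iff $f(x) \neq 0$ iff $f \notin \mathfrak{m}_{x}$, so that $\iota^{-1}(\mathcal{U}(f)) = \mathcal{D}(f)$.

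Finally, for the ``in particular'' clause I would identify the Zariski open set attached to $\mathcal{D}(f)$ with $\mathcal{U}(f)$. Its complement corresponds to the $k$-regulous closed set $\mathcal{Z}(f)$, whose vanishing ideal is $\mathcal{I}(\mathcal{Z}(f)) = {\rm Rad}\,(f)$ by the Nullstellensatz (Theorem~\ref{nullstellen}); hence $\widetilde{\mathcal{Z}(f)}$ is the set of primes containing ${\rm Rad}\,(f)$, which is exactly the set of primes containing $f$, namely the complement of $\mathcal{U}(f)$. Combined with the previous step this shows that every Zariski open subset of $\text{\rm Spec}\,(\mathcal{R}^{k}(K^{n}))$ equals some $\mathcal{U}(f)$ and corresponds to $\mathcal{D}(f)$. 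The only genuinely checkable point — and the sole place where care is needed — is this last matching $\widetilde{\mathcal{Z}(f)} = \{ \mathfrak{p} : f \in \mathfrak{p} \}$; the remainder is formal transport of structure along the bijection of Corollary~\ref{nullstellen-cor-1}.
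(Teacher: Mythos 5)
Your proof is correct and takes essentially the same approach as the paper, which states this corollary as an immediate consequence of Corollary~\ref{nullstellen-cor-1}, Proposition~\ref{regul-top} and Corollary~\ref{regul-prop-cor-1} --- exactly the transport-of-structure argument you spell out. Your explicit check that $\widetilde{\mathcal{Z}(f)} = \{ \mathfrak{p} : f \in \mathfrak{p} \}$ via Theorem~\ref{nullstellen} merely fills in the one detail the paper leaves implicit (its own proof of Corollary~\ref{nullstellen-cor-1} already invokes the Nullstellensatz in the same way).
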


\begin{remark}\label{remark-non-noe}
As demonstrated in~\cite{FHMM} (see also~\cite[Ex.~6.11]{Kur}),
the ring $\mathcal{R}^{k}(K^{n})$ is not noetherian for all $k, n
\in \mathbb{N}$, $n \geq 2$.
\end{remark}

From Theorem~\ref{nullstellen} and
Corollary~\ref{regul-prop-cor-1}, we immediately obtain

\begin{corollary}\label{nullstellen-cor-2}
Every radical ideal of $\mathcal{R}^{k}(K^{n})$ is the radical of
a principal ideal of $\mathcal{R}^{k}(K^{n})$. \hspace*{\fill}
$\Box$
\end{corollary}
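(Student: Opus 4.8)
The plan is to combine the regulous Nullstellensatz (Theorem~\ref{nullstellen}) with the fact that every $k$-regulous closed set is cut out by a single regulous function (Corollary~\ref{regul-prop-cor-1}). First I would fix an arbitrary radical ideal $I \subset \mathcal{R}^{k}(K^{n})$ and pass to its zero set
$$ \mathcal{Z}(I) = \{ x \in K^{n} : f(x)=0 \ \text{for all} \ f \in I \}. $$
By the very definition of the $k$-regulous topology, this is a $k$-regulous closed subset of $K^{n}$, being the common zero locus of the family $I$ of $k$-regulous functions.

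Next I would invoke Corollary~\ref{regul-prop-cor-1}, which asserts that every $k$-regulous closed subset is the zero set of a single $k$-regulous function. Applied to $\mathcal{Z}(I)$, this produces a function $g \in \mathcal{R}^{k}(K^{n})$ with $\mathcal{Z}(g) = \mathcal{Z}(I)$; equivalently, $\mathcal{Z}((g)) = \mathcal{Z}(I)$ for the principal ideal $(g)$ generated by $g$.

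It then remains only to identify $I$ with ${\rm Rad}\,((g))$, and here I would apply Theorem~\ref{nullstellen} twice. Since $I$ is radical, the theorem gives $I = {\rm Rad}\,(I) = \mathcal{I}(\mathcal{Z}(I))$. Applied instead to the principal ideal $(g)$, the same theorem yields ${\rm Rad}\,((g)) = \mathcal{I}(\mathcal{Z}(g))$. Because $\mathcal{Z}(g) = \mathcal{Z}(I)$, the two right-hand sides coincide, so that
$$ I = \mathcal{I}(\mathcal{Z}(I)) = \mathcal{I}(\mathcal{Z}(g)) = {\rm Rad}\,((g)), $$
exhibiting $I$ as the radical of a principal ideal.

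There is no genuine obstacle in this argument---it is a formal consequence of the two cited results, exactly as the phrase ``we immediately obtain'' preceding the statement suggests. The only point demanding a moment of care is the observation that $\mathcal{Z}(I)$ is $k$-regulous closed, so that Corollary~\ref{regul-prop-cor-1} is applicable; this, however, is immediate from the definition of the $k$-regulous topology as being generated by the zero sets of families of $k$-regulous functions. All the real content has already been placed in Theorem~\ref{nullstellen} (which rests on the \L{}ojasiewicz inequality) and in the noetherianity used to prove Corollary~\ref{regul-prop-cor-1}.
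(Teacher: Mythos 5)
Your proof is correct and is precisely the argument the paper intends: the corollary is stated there with no written proof beyond the phrase that it follows ``immediately'' from Theorem~\ref{nullstellen} and Corollary~\ref{regul-prop-cor-1}, and your chain $I = \mathcal{I}(\mathcal{Z}(I)) = \mathcal{I}(\mathcal{Z}(g)) = {\rm Rad}\,((g))$, with $g$ supplied by Corollary~\ref{regul-prop-cor-1}, is exactly that deduction. Nothing further is needed.
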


Finally, we return to the the comparison of the regulous and
constructible topologies. Below we state the non-archimedean
version of \cite[Theorem~6.4]{FHMM} by
Fichou--Huisman--Mangolte--Monnier, which says that those
topologies coincide in the real algebraic geometry. The proof
relies on their Lemmata~5.1 and~5.2, and it can be repeated
verbatim in the case of the ground fields $K$ studied in our paper
by means of Corollaries~\ref{Loj-prop-cor-1}
and~\ref{Loj-prop-cor-2}.

\begin{proposition}\label{compari-th}
The $k$-regulous closed subsets of $K^{n}$ are precisely the
closed (in the $K$-topology) constructible subsets of $K^{n}$.
\hspace*{\fill} $\Box$
\end{proposition}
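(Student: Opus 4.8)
The plan is to prove the two inclusions separately, the first being immediate from what has already been established and the second carrying the entire weight of the argument.

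If $V\subseteq K^n$ is $k$-regulous closed, then by Corollary~\ref{regul-prop-cor-1} it is the zero set $\mathcal{Z}(f)$ of a single $k$-regulous function $f$, and hence, by Corollary~\ref{filtration-cor-4}, a closed (in the $K$-topology) constructible subset of $K^n$. Thus every $k$-regulous closed set is closed constructible, and only the reverse inclusion needs work.

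For the reverse inclusion I would follow the proof of \cite[Theorem~6.4]{FHMM}, whose Lemmata~5.1 and~5.2 are exactly our Corollaries~\ref{Loj-prop-cor-1} and~\ref{Loj-prop-cor-2}. First I would reduce to the irreducible case: since the constructible topology is noetherian (Proposition~\ref{construct-top}), a closed constructible set $S$ is a finite union of its constructible-irreducible components, and since a product of $k$-regulous functions is $k$-regulous with $\mathcal{Z}(fg)=\mathcal{Z}(f)\cup\mathcal{Z}(g)$, a finite union of $k$-regulous closed sets is again $k$-regulous closed. It therefore suffices to produce, for an irreducible closed constructible $S$, a single $k$-regulous function $f$ with $\mathcal{Z}(f)=S$. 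By the correspondence of Corollary~\ref{correspond-prop}, $S=\overline{\mathrm{Reg}(V)}^{\,c}$ for the irreducible Zariski closed set $V:=\overline{S}^{\,Z}$; in particular $S$ contains the smooth stratum $R:=\mathrm{Reg}(V)\cap K^n$, and both $S\setminus R$ and the excess locus $T:=V(K)\setminus S$ are contained in $V_{\mathrm{sing}}(K)$. I would then induct on $d:=\dim V$, the case $d=0$ being clear since a finite set of points is cut out by a product of translates of the polynomial $G_n$ of Lemma~\ref{zero-lemma}.

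In the inductive step, set $F:=S\cap V_{\mathrm{sing}}(K)$ and $T:=V(K)\setminus S$; both are constructible of dimension $<d$ and lie in $V_{\mathrm{sing}}(K)$, so their constructible closures $\overline{F}^{\,c}$ and $\overline{T}^{\,c}$ are $k$-regulous closed by the inductive hypothesis. On the $k$-regulous open set $U:=K^n\setminus(\overline{F}^{\,c}\cup\overline{T}^{\,c})$ one has $V(K)\cap U=R$ (since $R\cap V_{\mathrm{sing}}(K)=\emptyset$), so $R$ is exactly the zero locus of the polynomial $q$ defining $V$ (Lemma~\ref{zero-lemma}) restricted to $U$. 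It remains to extend this local description across the removed locus to a global $k$-regulous function whose zero set is $R\cup F=S$, and this is precisely where the Łojasiewicz corollaries enter: exactly as in \cite{FHMM}, one adjusts $q$ by a regulous factor invertible on $U$ and applies Corollaries~\ref{Loj-prop-cor-1} and~\ref{Loj-prop-cor-2} to recognize the resulting rational function, after raising to a sufficiently high power, as a global $k$-regulous function with the correct zero set. The main obstacle, and the only point at which the non-archimedean nature of $K$ could matter, is to guarantee that the constructed function does not vanish on the extra $K$-points $T$; this amounts to an order comparison between $q$ and the local equations of $V_{\mathrm{sing}}$ along $T$, which is furnished precisely by the Łojasiewicz inequality (Proposition~\ref{L-2}) through Corollaries~\ref{Loj-prop-cor-1} and~\ref{Loj-prop-cor-2}. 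With $T$ and the frontier $F$ absorbed by the dimension induction and the non-vanishing on $T$ secured by Łojasiewicz, the argument of \cite[Theorem~6.4]{FHMM} carries over verbatim.
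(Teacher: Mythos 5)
Your proposal takes essentially the same route as the paper: the paper disposes of this proposition precisely by observing that one inclusion is already contained in Corollary~\ref{regul-prop-cor-1} (together with Corollary~\ref{filtration-cor-4}), and that the reverse inclusion follows by repeating the proof of \cite[Theorem~6.4]{FHMM} verbatim with their Lemmata~5.1 and~5.2 replaced by Corollaries~\ref{Loj-prop-cor-1} and~\ref{Loj-prop-cor-2}. Your added sketch of the internal induction (reduction to irreducible closed constructible sets via noetherianity and Corollary~\ref{correspond-prop}, dimension induction, Lemma~\ref{zero-lemma} in place of sums of squares, and the \L{}ojasiewicz corollaries for the extension and non-vanishing steps) is consistent with that argument, so nothing further is needed.
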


The above theorem along with Corollary~\ref{correspond-prop} and
Corollary~\ref{nullstellen-cor-1} yield the following

\begin{corollary}\label{compari-th-cor-1}
There are one-to-one correspondences between the prime ideals of
the ring $\mathcal{R}^{k}(K^{n})$, the irreducible closed
constructible subsets of $K^{n}$ and the irreducible Zariski
closed subsets of $K^{n}$. \hspace*{\fill} $\Box$
\end{corollary}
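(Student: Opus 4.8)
The plan is to obtain the three asserted correspondences by composing bijections already established earlier in the excerpt, so that the argument is essentially a matter of threading the statements together in the right order.

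First I would invoke Corollary~\ref{nullstellen-cor-1}, whose Nullstellensatz-based content yields a one-to-one correspondence between the radical ideals of $\mathcal{R}^{k}(K^{n})$ and the $k$-regulous closed subsets of $K^{n}$, sending an ideal $I$ to its zero set $\mathcal{Z}(I)$ and a closed set $V$ to $\mathcal{I}(V)$. Restricting this correspondence to prime ideals gives a bijection between the prime ideals of $\mathcal{R}^{k}(K^{n})$ and the \emph{irreducible} $k$-regulous closed subsets of $K^{n}$: a closed set $V$ is irreducible exactly when $\mathcal{I}(V)$ is prime, and conversely $\mathcal{Z}(\mathfrak{p})$ is irreducible whenever $\mathfrak{p}$ is prime. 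This handles the first link of the chain.

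Second, I would appeal to Proposition~\ref{compari-th}, according to which the $k$-regulous closed subsets of $K^{n}$ are precisely the closed (in the $K$-topology) constructible subsets. Since the two families of closed sets coincide verbatim, the $k$-regulous topology and the constructible topology are literally the same topology on $K^{n}$; in particular the two notions of irreducibility agree, and hence the irreducible $k$-regulous closed subsets are exactly the irreducible closed constructible subsets. Third, Corollary~\ref{correspond-prop} supplies the last bijection, via $\alpha(E)=\overline{E}^{Z}$ and $\beta(V)=\overline{\mathrm{Reg}(V)}^{c}$, between the irreducible closed constructible subsets of $K^{n}$ and the irreducible Zariski closed subsets of $K^{n}$. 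Composing these three correspondences produces the desired chain of one-to-one correspondences linking prime ideals, irreducible closed constructible subsets, and irreducible Zariski closed subsets.

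I do not expect any genuine obstacle at this stage: all of the analytic substance has already been absorbed into the cited results, namely the regulous Nullstellensatz (Theorem~\ref{nullstellen}) and Proposition~\ref{compari-th}, both of which ultimately rest on the \L{}ojasiewicz inequality and the closedness theorem, together with the density-property reasoning behind Corollary~\ref{correspond-prop}. The only point deserving a moment's care is to check that irreducibility is matched correctly under each step, so that primeness of an ideal corresponds to irreducibility of its zero set and so that the coincidence of the two topologies in Proposition~\ref{compari-th} transports irreducibility without distortion; once these routine compatibilities are noted, the corollary follows immediately by composition.
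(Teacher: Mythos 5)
Your proposal is correct and follows exactly the paper's own route: the paper derives this corollary by combining Proposition~\ref{compari-th}, Corollary~\ref{correspond-prop}, and Corollary~\ref{nullstellen-cor-1} (the last of which already records that prime ideals correspond to irreducible $k$-regulous closed sets). Your extra care about transporting irreducibility across the identification of the two topologies is sound but is precisely the routine compatibility the paper leaves implicit.
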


\begin{corollary}\label{compari-th-cor-2}
The dimension of the topological space $K^{n}$ with the regulous
topology and the Krull dimension of the ring
$\mathcal{R}^{k}(K^{n})$ is $n$.  \hspace*{\fill} $\Box$
\end{corollary}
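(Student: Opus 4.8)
The plan is to reduce both quantities to one and the same number, namely the supremum of the lengths of chains of irreducible Zariski closed subsets of $K^{n}$, and then to identify that number with the classical affine dimension $n$. First I would unwind the two definitions. The Krull dimension of $\mathcal{R}^{k}(K^{n})$ is by definition the supremum of the lengths $d$ of strictly ascending chains of prime ideals
$$ \mathfrak{p}_{0} \subsetneq \mathfrak{p}_{1} \subsetneq \cdots \subsetneq \mathfrak{p}_{d}, $$
whereas the dimension of the topological space $K^{n}$ in the regulous topology is the supremum of the lengths of strictly descending chains of irreducible regulous closed subsets. By Proposition~\ref{compari-th} the irreducible regulous closed subsets are precisely the irreducible closed (in the $K$-topology) constructible subsets, so the topological dimension is the supremum of lengths of chains of irreducible closed constructible subsets.

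Next I would invoke the correspondences of Corollary~\ref{compari-th-cor-1}, the essential point being to check that they are \emph{strictly} inclusion-compatible, so that chain lengths are transported exactly rather than merely objects being matched up. On one side, the bijection furnished by the Nullstellensatz (Corollary~\ref{nullstellen-cor-1}) sends a prime $\mathfrak{p}$ to the irreducible closed constructible set $\mathcal{Z}(\mathfrak{p})$ and is inclusion-reversing; hence a prime chain of length $d$ yields a strictly descending chain of irreducible closed constructible sets of the same length, and conversely. On the other side, the map $\alpha$ of Corollary~\ref{correspond-prop}, sending an irreducible closed constructible set $E$ to its Zariski closure $\overline{E}^{Z}$, is inclusion-preserving and bijective with inverse $\beta$; being an injective monotone map it carries strict inclusions to strict inclusions, so it sets up a length-preserving correspondence between chains of irreducible closed constructible subsets and chains of irreducible Zariski closed subsets of $K^{n}$. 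Combining the two, both the Krull dimension of $\mathcal{R}^{k}(K^{n})$ and the regulous dimension of $K^{n}$ equal the supremum of the lengths of chains of irreducible Zariski closed subsets of $K^{n}$.

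Finally I would identify this last supremum with $n$. Since $K^{n}$ is Zariski dense, the irreducible Zariski closed subsets of $K^{n}$ correspond to the irreducible $K$-subvarieties of $K\mathbb{A}^{n}$, whose dimensions range over $0,1,\ldots,n$. A maximal chain
$$ \{ a \} \subsetneq V_{1} \subsetneq \cdots \subsetneq V_{n} = K^{n}, \qquad \dim V_{i} = i, $$
is produced, for instance, from a flag of coordinate subspaces, the density property guaranteeing that each $V_{i}$ carries enough $K$-points to keep the inclusions strict; this gives the lower bound $d \ge n$. For the upper bound, each strict inclusion of irreducible Zariski closed sets strictly lowers the dimension, which is bounded by $\dim K\mathbb{A}^{n} = n$, so $d \le n$.

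The only genuinely delicate point is the order-theoretic bookkeeping in the second paragraph, i.e.\ confirming that both correspondences are order embeddings (strict inclusions to strict inclusions) so that the three suprema of chain lengths are literally equal; the noetherianity of the regulous topology (Proposition~\ref{regul-top}) keeps these chains finite and the dimension well behaved. Once that is in place the remaining content is the classical computation that affine $n$-space has dimension $n$.
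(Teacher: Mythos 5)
Your skeleton is the intended one, and two of its three pillars are sound. The Nullstellensatz correspondence of Corollary~\ref{nullstellen-cor-1} is a genuine order \emph{anti-isomorphism}, since both $\mathfrak{p} \mapsto \mathcal{Z}(\mathfrak{p})$ and $E \mapsto \mathcal{I}(E)$ are inclusion-reversing and mutually inverse; hence the Krull dimension of $\mathcal{R}^{k}(K^{n})$ equals the dimension of $K^{n}$ in the regulous (= constructible, by Proposition~\ref{compari-th}) topology, exactly as you say. Likewise, $\alpha$ of Corollary~\ref{correspond-prop} is monotone and injective, so it carries strict chains of irreducible closed constructible sets to strict chains of irreducible Zariski closed sets, and your third paragraph correctly bounds the length of the latter by $n$. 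This gives both dimensions $\leq n$.

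The gap is precisely at the point you flagged as ``bookkeeping'': a monotone bijection between posets need not be an order isomorphism, and here it is not --- the inverse $\beta: V \mapsto \overline{\text{Reg}\,(V)}^{c}$ is \emph{not} inclusion-preserving. Take $G_{2}$ as in Lemma~\ref{zero-lemma} (of degree $d \geq 2$) and the umbrella $W := \{ z\, G_{2}(x,y) = x^{d+1} \} \subset K^{3}$, an irreducible Zariski closed set containing the line $L := \{x=y=0\}$. An estimate as in Lemma~\ref{aux-lemma} gives $v(G_{2}(x,y)) \leq d \min (v(x),v(y)) + C$, so on $W \setminus L$ one has $v(z) \geq v(x) - C$, whence $z \to 0$ as $(x,y) \to (0,0)$; thus $\beta(W)$ (the canopy) meets $L$ only at the origin, while $\beta(L)=L$, so $L \subset W$ but $\beta(L) \not\subset \beta(W)$. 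This is the very umbrella phenomenon regulous geometry is built around, and it breaks your claim that the correspondence is length-preserving on chains in both directions: as written you obtain only the upper bound, and the flag of your third paragraph, living in the Zariski world, does not automatically transfer back to the constructible world. The repair is a one-liner: the coordinate flag $\{0\} \subsetneq K \times \{0\}^{n-1} \subsetneq \cdots \subsetneq K^{n}$ is \emph{already} a chain of irreducible regulous closed subsets, since each member is Zariski closed and $K$-closed (hence closed constructible) and equals $\beta$ of its smooth Zariski closure, so is irreducible in the constructible topology by Corollary~\ref{correspond-prop}. That yields the lower bound $n$ for the regulous dimension directly, and via the Nullstellensatz anti-isomorphism for the Krull dimension, completing the proof.
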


\section{Quasi-coherent regulous sheaves}
The concepts of quasi-coherent $k$-regulous sheaves on $K^{n}$ and
$k$-regulous affine varieties, $k \in \mathbb{N} \cup \{ \infty
\}$, can be introduced over valued fields studied in this paper,
similarly as by Fichou--Huisman--Mangolte--Monnier~\cite{FHMM}
over the real ground field $\mathbb{R}$. Also, the majority of
their results concerning these concepts carry over to the
non-archimedean geometry with similar proofs. For the sake of
completeness, we provide an exposition of the theory of
quasi-coherent regulous sheaves. Here we shall deal only with
$k$-regulous functions with a non-negative integer $k$, because
for $k = \infty$ we encounter the classical case of regular
functions and quasi-coherent algebraic sheaves.

\vspace{1ex}

Consider an affine scheme $Y = \text{\rm Spec}\, (A)$ with
structure sheaf $\mathcal{O}_{Y}$. Any $A$-module $M$ determines a
quasi-coherent sheaf $\widetilde{M}$ on $Y$
(cf.~\cite[Chap.~II]{Ha}). The functor $M \longmapsto
\widetilde{M}$ gives an equivalence of categories between the
category of $A$-modules and the category of quasi-coherent
$\mathcal{O}_{Y}$-modules. Its inverse is the global sections
functor
$$ \mathcal{F} \longmapsto H^{0}(Y,\mathcal{F}) $$
(cf.~\cite[Chap.~II, Corollary~5.5]{Ha}).

\vspace{1ex}

Denote by $\widetilde{\mathcal{R}}^{k}$ the structure sheaf of the
affine scheme $\text{\rm Spec} \left( \mathcal{R}^{k}(K^{n})
\right)$ and by $\mathcal{R}^{k}$ the sheaf of $k$-regulous
function germs (in the $k$-regulous topology equal to the
constructible topology) on $K^{n}$. It follows directly from
Corollaries~\ref{nullstellen-cor-1} and~\ref{Loj-prop-cor-1} that
the restriction $\iota^{-1} \widetilde{\mathcal{R}}^{k}$ of
$\widetilde{\mathcal{R}}^{k}$ to $K^{n}$ coincides with the sheaf
$\mathcal{R}^{k}$; conversely, \ $\iota_{*} \, \mathcal{R}^{k} =
\widetilde{\mathcal{R}}^{k}$.

\vspace{1ex}

By a $k$-regulous sheaf $\mathcal{F}$ we mean a sheaf of
$\mathcal{R}^{k}$-modules. Again, it follows immediately from
Corollaries~\ref{nullstellen-cor-1} and~\ref{nullstellen-cor-1.5}
that the functor $\iota^{-1}$ of restriction to $K^{n}$ gives an
equivalence of categories between
$\widetilde{\mathcal{R}}^{k}$-modules and
$\mathcal{R}^{k}$-modules. Its inverse is the direct image functor
$\iota_{*}$.

\vspace{1ex}

We say that $\mathcal{F}$ is a quasi-coherent $k$-regulous sheaf
on $K^{n}$ if it is the restriction to $K^{n}$ of a quasi-coherent
$\widetilde{\mathcal{R}}^{k}$-module. Thus the functor
$\iota^{-1}$ induces an equivalence of categories between
quasi-coherent $\widetilde{\mathcal{R}}^{k}$-modules and
quasi-coherent $\mathcal{R}^{k}$-modules, whose inverse is the
direct image functor $\iota_{*}$. For any
$\mathcal{R}^{k}(K^{n})$-module $M$, we shall denote by
$\widetilde{M}$ both the associated sheaf on $\text{\rm Spec}
\left( \mathcal{R}^{k}(K^{n}) \right)$ and its restriction to
$K^{n}$. This abuse of notation does not lead to confusion. We
thus obtain the following version of Cartan's Theorem~A:

\begin{theorem}\label{Cartan-A}
The functor $M \longmapsto \widetilde{M}$ gives an equivalence of
categories between the category of
$\mathcal{R}^{k}(K^{n})$-modules and the category of
quasi-coherent $\mathcal{R}^{k}$-modules. Its inverse is the
global sections functor
$$ \mathcal{F} \longmapsto H^{0}(K^{n},\mathcal{F}). $$
In particular, every quasi-coherent sheaf $\mathcal{F}$ is
generated by its global sections $H^{0}(K^{n},\mathcal{F})$.
\hspace*{\fill} $\Box$
\end{theorem}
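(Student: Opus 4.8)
The plan is to reduce Theorem~\ref{Cartan-A} to the standard scheme-theoretic equivalence of categories already recalled in the excerpt (the functor $M \longmapsto \widetilde{M}$ between $A$-modules and quasi-coherent $\mathcal{O}_Y$-modules on $Y = \text{\rm Spec}(A)$, together with its inverse $\mathcal{F} \longmapsto H^0(Y,\mathcal{F})$, cf.~\cite[Chap.~II, Corollary~5.5]{Ha}), transported across the homeomorphism $\iota$ between $K^n$ (in the $k$-regulous topology) and $\text{\rm Spec}(\mathcal{R}^k(K^n))$. Since essentially all the substantive work has been done beforehand, the proof should be short: I would set $A := \mathcal{R}^k(K^n)$ and $Y := \text{\rm Spec}(A)$, so that the classical result gives an equivalence between $A$-modules and quasi-coherent $\widetilde{\mathcal{R}}^k$-modules on $Y$, with inverse the global-sections functor $\mathcal{F} \mapsto H^0(Y,\mathcal{F})$.

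\emph{First} I would invoke the equivalence of categories already established in the preceding paragraphs: the restriction functor $\iota^{-1}$ gives an equivalence between quasi-coherent $\widetilde{\mathcal{R}}^k$-modules and quasi-coherent $\mathcal{R}^k$-modules, with inverse the direct image $\iota_*$. \emph{Second}, I would compose this with the classical scheme equivalence $M \mapsto \widetilde{M}$. Under the stated abuse of notation, for an $\mathcal{R}^k(K^n)$-module $M$ the symbol $\widetilde{M}$ denotes both the associated sheaf on $Y$ and its restriction $\iota^{-1}\widetilde{M}$ to $K^n$, so the composite functor is precisely $M \mapsto \widetilde{M}$ landing in quasi-coherent $\mathcal{R}^k$-modules, and it is an equivalence as a composite of two equivalences. \emph{Third}, to identify the inverse as the global sections functor $\mathcal{F} \mapsto H^0(K^n,\mathcal{F})$, I would observe that, since $\iota$ is a homeomorphism and $\iota_*$ is direct image, global sections are preserved: $H^0(K^n,\mathcal{F}) = H^0(Y,\iota_*\mathcal{F})$. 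Thus the inverse of the composite agrees, under the identification, with the global-sections functor of the scheme $Y$, which is exactly what the theorem asserts.

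\emph{Finally}, the concluding ``in particular'' clause---that every quasi-coherent $\mathcal{R}^k$-module $\mathcal{F}$ is generated by its global sections---follows because its image $\iota_*\mathcal{F}$ is a quasi-coherent $\widetilde{\mathcal{R}}^k$-module on the affine scheme $Y$, and on an affine scheme every quasi-coherent sheaf is of the form $\widetilde{M}$ with $M = H^0(Y,\iota_*\mathcal{F})$, hence is generated by its global sections; this property transports back to $K^n$ via the equivalence $\iota^{-1}$.

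The proof presents no genuine obstacle: the entire content has been front-loaded into the identification $\iota_*\mathcal{R}^k = \widetilde{\mathcal{R}}^k$ (resting on Corollary~\ref{nullstellen-cor-1}, which makes $\iota$ a homeomorphism onto the spectrum, and on Corollary~\ref{Loj-prop-cor-1}, which identifies stalks/localizations $\mathcal{R}^k(\mathcal{D}(f)) = \mathcal{R}^k(K^n)_f$) and into the equivalence of module categories recorded just above the theorem. The only point demanding a little care---and the closest thing to a ``hard part''---is checking that the abuse of notation $\widetilde{M}$ is internally consistent, i.e.\ that restricting the scheme-sheaf $\widetilde{M}$ along $\iota^{-1}$ yields the same object one would build directly as a sheaf of $\mathcal{R}^k$-modules on $K^n$; but this is immediate from the noetherianity of the $k$-regulous topology (Proposition~\ref{regul-top}) together with the localization identity of Corollary~\ref{Loj-prop-cor-1}, which guarantees that the sections of $\widetilde{M}$ over a basic open $\mathcal{U}(f)$ are $M_f$, matching the $k$-regulous sections over $\mathcal{D}(f)$.
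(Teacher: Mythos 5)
Your proposal is correct and follows essentially the same route as the paper: the theorem is stated there with no separate proof precisely because it is the composite of the classical affine-scheme equivalence of \cite[Chap.~II, Corollary~5.5]{Ha} with the equivalence $\iota^{-1}$/$\iota_{*}$ between $\widetilde{\mathcal{R}}^{k}$-modules and $\mathcal{R}^{k}$-modules established (via Corollaries~\ref{nullstellen-cor-1}, \ref{nullstellen-cor-1.5} and~\ref{Loj-prop-cor-1}) in the paragraphs preceding the statement. The only nuance is that $\iota$ is an embedding inducing a bijection of open sets (a quasi-homeomorphism onto $\text{\rm Spec}\left(\mathcal{R}^{k}(K^{n})\right)$) rather than a homeomorphism, but your key identity $H^{0}(K^{n},\mathcal{F}) = H^{0}\left(\text{\rm Spec}\left(\mathcal{R}^{k}(K^{n})\right),\iota_{*}\mathcal{F}\right)$ holds all the same, so nothing in the argument is affected.
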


The regulous version of Cartan's Theorem~B, stated below, follows
directly from the version for affine (not necessarily noetherian
in view of Remark~\ref{remark-non-noe}) schemes
(cf.~\cite[Theorem~1.3.1]{Gro}) via the discussed equivalence of
categories (being the functor $\iota^{-1}$ of restriction to
$K^{n}$).

\begin{theorem}\label{Cartan-B}
If $\mathcal{F}$ is a quasi-coherent $k$-regulous sheaf on
$K^{n}$, then
$$ H^{i}(K^{n},\mathcal{F}) = 0  \ \ \ \text{for all} \ \ i > 0. $$
\hspace*{\fill} $\Box$
\end{theorem}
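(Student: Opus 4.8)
The plan is to deduce the regulous version of Cartan's Theorem~B from the classical vanishing of higher cohomology of quasi-coherent sheaves on an affine scheme, transported along the equivalence of categories induced by $\iota^{-1}$. The starting input is Grothendieck's theorem (cf.~\cite[Theorem~1.3.1]{Gro}), valid for arbitrary, not necessarily noetherian, affine schemes: for any $A$-module $M$ one has $H^{i}(\text{\rm Spec}(A), \widetilde{M}) = 0$ for all $i>0$. I would apply this with $A := \mathcal{R}^{k}(K^{n})$ and $M := H^{0}(K^{n}, \mathcal{F})$, the non-noetherian generality being exactly what is needed here in view of Remark~\ref{remark-non-noe}. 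By the very definition of a quasi-coherent $k$-regulous sheaf and by Theorem~\ref{Cartan-A}, the sheaf $\mathcal{F}$ is the restriction $\iota^{-1}\widetilde{M}$ to $K^{n}$ of the quasi-coherent $\widetilde{\mathcal{R}}^{k}$-module $\widetilde{M}$ on $\text{\rm Spec}(\mathcal{R}^{k}(K^{n}))$. Hence everything reduces to showing that $\iota^{-1}$ preserves cohomology.

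The crux is Corollary~\ref{nullstellen-cor-1.5}, by which $\iota$ induces an inclusion-preserving bijection between the $k$-regulous open subsets of $K^{n}$ and the Zariski open subsets of $\text{\rm Spec}(\mathcal{R}^{k}(K^{n}))$. This lattice isomorphism upgrades the equivalence $\iota^{-1}$ already recorded above the statement — between $\widetilde{\mathcal{R}}^{k}$-modules and $\mathcal{R}^{k}$-modules, with quasi-inverse $\iota_{*}$ — to an exact equivalence between the underlying categories of sheaves of abelian groups on the two spaces. Moreover it identifies the two global-sections functors,
$$ H^{0}(K^{n}, \iota^{-1}\mathcal{G}) = H^{0}(\text{\rm Spec}(\mathcal{R}^{k}(K^{n})), \mathcal{G}), $$
because under the lattice isomorphism the top open set $K^{n}$ corresponds to the whole of $\text{\rm Spec}(\mathcal{R}^{k}(K^{n}))$. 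An exact equivalence of abelian categories carries injective resolutions to injective resolutions, so it intertwines the right derived functors of these two global-sections functors. I would therefore conclude
$$ H^{i}(K^{n}, \mathcal{F}) = H^{i}(K^{n}, \iota^{-1}\widetilde{M}) \cong H^{i}(\text{\rm Spec}(\mathcal{R}^{k}(K^{n})), \widetilde{M}) = 0 \quad \text{for } i>0, $$
the final vanishing being Grothendieck's theorem.

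The main obstacle I anticipate is verifying cleanly that the open-set correspondence of Corollary~\ref{nullstellen-cor-1.5} is genuinely strong enough to force the identity of cohomology — that is, that $\iota$ induces an equivalence of the two associated topoi, and not merely a bijection of points or a comparison at the level of sections over a single open set. The decisive observation is that sheaf cohomology depends only on the site of open sets together with the global-sections functor; once the frames of open sets are identified order-isomorphically, the categories of sheaves, their exactness structure, and the global-sections functors are all transported isomorphically, so the derived functors agree automatically. With this in hand, the statement is a formal consequence of the equivalence established before Theorem~\ref{Cartan-A} together with the affine vanishing theorem, and no further geometry of $K$ enters.
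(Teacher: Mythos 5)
Your proposal is correct and follows essentially the same route as the paper: the paper likewise deduces the statement directly from Grothendieck's vanishing theorem for quasi-coherent sheaves on (not necessarily noetherian) affine schemes \cite[Theorem~1.3.1]{Gro}, transported along the equivalence of categories given by the restriction functor $\iota^{-1}$. The only difference is that you spell out the justification — the open-set correspondence of Corollary~\ref{nullstellen-cor-1.5} is a frame isomorphism, hence identifies the sheaf categories, global sections, and their derived functors — which the paper leaves implicit in the phrase ``via the discussed equivalence of categories.''
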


\begin{corollary}\label{B-cor-1}
The global sections functor
$$ \mathcal{F} \longmapsto H^{0}(K^{n},\mathcal{F}) $$
on the category of quasi-coherent $k$-regulous sheaves on $K^{n}$
is exact. \hspace*{\fill} $\Box$
\end{corollary}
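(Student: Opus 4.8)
The plan is to deduce exactness of the global sections functor from the cohomology vanishing in Theorem~\ref{Cartan-B} by the standard long exact sequence argument. First I would recall that the category of quasi-coherent $k$-regulous sheaves on $K^{n}$ is abelian: via the equivalence of categories established in Theorem~\ref{Cartan-A}, it is identified with the category of $\mathcal{R}^{k}(K^{n})$-modules, which is abelian, so the kernel and cokernel of a morphism of quasi-coherent $k$-regulous sheaves are again quasi-coherent. In particular, a short exact sequence
$$ 0 \longrightarrow \mathcal{F}' \longrightarrow \mathcal{F} \longrightarrow \mathcal{F}'' \longrightarrow 0 $$
of quasi-coherent $k$-regulous sheaves is genuinely exact as a sequence of sheaves of $\mathcal{R}^{k}$-modules on the topological space $K^{n}$ (endowed with the $k$-regulous, equivalently the constructible, topology).

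Next I would pass to the associated long exact sequence of sheaf cohomology:
$$ 0 \to H^{0}(K^{n},\mathcal{F}') \to H^{0}(K^{n},\mathcal{F}) \to H^{0}(K^{n},\mathcal{F}'') \to H^{1}(K^{n},\mathcal{F}') \to \cdots. $$
Left exactness of $\mathcal{F} \mapsto H^{0}(K^{n},\mathcal{F})$ is automatic, being a general property of the global sections functor on the category of sheaves. The only thing to verify for full exactness is the surjectivity of $H^{0}(K^{n},\mathcal{F}) \to H^{0}(K^{n},\mathcal{F}'')$, and in the long exact sequence this is controlled by the connecting homomorphism into $H^{1}(K^{n},\mathcal{F}')$. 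By Theorem~\ref{Cartan-B} applied to the quasi-coherent sheaf $\mathcal{F}'$, we have $H^{1}(K^{n},\mathcal{F}')=0$, so the connecting map is zero and the desired surjectivity follows.

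I do not anticipate a genuine obstacle here, since the corollary is a formal consequence of Theorem~\ref{Cartan-B}. The only point requiring a moment's care is confirming that a short exact sequence internal to the category of quasi-coherent $k$-regulous sheaves coincides with exactness as a sequence of $\mathcal{R}^{k}$-modules, which is exactly what the equivalence with $\mathcal{R}^{k}(K^{n})$-modules guarantees. Combining these observations shows that $\mathcal{F} \mapsto H^{0}(K^{n},\mathcal{F})$ sends short exact sequences to short exact sequences, i.e.\ is exact, as asserted.
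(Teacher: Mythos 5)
Your proof is correct and matches the paper's intent: the corollary is stated there as an immediate formal consequence of Theorem~\ref{Cartan-B}, exactly via the long exact cohomology sequence and the vanishing of $H^{1}(K^{n},\mathcal{F}')$ that you spell out. Your additional care in checking that exactness in the quasi-coherent category agrees with sheaf-level exactness (through the equivalence with $\mathcal{R}^{k}(K^{n})$-modules and the affine-scheme picture) is a reasonable elaboration of what the paper leaves implicit.
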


Let $V$ be a $k$-regulous closed subset of $K^{n}$ and
$\mathfrak{I}(V)$ the sheaf of those $k$-regulous function germs
on $K^{n}$ that vanish on $V$. It is a quasi-coherent sheaf of
ideals of $\mathcal{R}^{k}$, the sheaf
$\mathcal{R}^{k}/\mathfrak{I}(V)$ has support $V$ and is generated
by its global sections (Theorem A); moreover
$$ H^{0} \left( K^{n},\mathcal{R}^{k}/\mathfrak{I}(V) \right) =
   H^{0} \left( K^{n},\mathcal{R}^{k} \right)/H^{0} \left(
   K^{n},\mathfrak{I}(V) \right) $$
(Theorem B). The subset $V$ inherits the $k$-regulous topology
from $K^{n}$ and constitutes, together with the restriction
$\mathcal{R}^{k}_{V}$ of the sheaf
$\mathcal{R}^{k}/\mathfrak{I}(V)$ to $V$, a locally ringed space
of $K$-algebras, called an affine $k$-regulous subvariety of
$K^{n}$. More generally, by an affine $k$-regulous variety we mean
any locally ringed space of $K$-algebras that is isomorphic to an
affine $k$-regulous subvariety of $K^{n}$ for some $n \in
\mathbb{N}$.

\vspace{1ex}

We can define in the ordinary fashion the category of
quasi-coherent $\mathcal{R}^{k}_{V}$-modules. Each such module
extends trivially by zero to a quasi-coherent
$\mathcal{R}^{k}$-module on $K^{n}$. The sections
$\mathcal{R}^{k}_{V}(V)$ of the structure sheaf
$\mathcal{R}^{k}_{V}$ are called $k$-regulous functions on $V$. It
follows from Cartan's Theorem B that each $k$-regulous function on
$V$ is the restriction to $V$ of a $k$-regulous function on
$K^{n}$. Hence we immediately obtain the following two results.

\begin{proposition}
Let $W$ and $V$ be two affine $k$-regulous subvarieties of $K^{m}$
and $K^{n}$, respectively. Then the following three conditions are
equivalent:

1) $f: W \to V$ is a morphism of locally ringed spaces;

2) $f= (f_{1},\ldots,f_{n}): W \to K^{n}$ where
$f_{1},\ldots,f_{n}$ are $k$-regulous functions on $W$ such that
$f(W) \subset V$;

3) $f$ extends to a $k$-regulous map $K^{m} \to K^{n}$.
\hspace*{\fill} $\Box$
\end{proposition}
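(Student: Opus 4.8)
The plan is to establish the three equivalences through the easy pair $(2)\Leftrightarrow(3)$ together with the more delicate $(1)\Leftrightarrow(2)$. Throughout I write $x_{1},\ldots,x_{n}$ for the coordinate functions on $K^{n}$; their restrictions to $V$ are global sections of $\mathcal{R}^{k}_{V}$, that is, $k$-regulous functions on $V$, and similarly on $W$. I will use two background facts: first, as noted after Theorem~\ref{Cartan-B}, every $k$-regulous function on $V$ (resp.\ $W$) is the restriction of a $k$-regulous function on $K^{n}$ (resp.\ $K^{m}$); second, the stalk $\mathcal{R}^{k}_{V,v}$ is a local ring whose maximal ideal consists exactly of the germs vanishing at $v$ (evaluation at $v$ is a surjective $K$-algebra homomorphism onto the residue field $K$, and a germ nonvanishing at $v$ is a unit since its reciprocal is again $k$-regulous near $v$), which is the local counterpart of the global point--maximal-ideal correspondence of Corollary~\ref{nullstellen-cor-1}.

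For $(3)\Rightarrow(2)$, if $f$ is the restriction to $W$ of a $k$-regulous map $F=(F_{1},\ldots,F_{n}):K^{m}\to K^{n}$, then its components $f_{i}=F_{i}|_{W}$ are restrictions of $k$-regulous functions, hence $k$-regulous on $W$, and $f(W)\subset V$ by hypothesis. Conversely, for $(2)\Rightarrow(3)$ I extend each component: by the consequence of Cartan's Theorem~B recalled above, each $f_{i}$ extends to a $k$-regulous function $F_{i}$ on $K^{m}$, and $F=(F_{1},\ldots,F_{n})$ is then a $k$-regulous map $K^{m}\to K^{n}$ restricting to $f$. This is the only place where the extension result enters.

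For $(2)\Rightarrow(1)$ I construct the comorphism. First, by $(2)\Rightarrow(3)$ the map $f$ is the restriction of a $k$-regulous map $K^{m}\to K^{n}$, hence continuous in the $k$-regulous topologies by Corollary~\ref{regul-prop-cor-2}. Next, for a $k$-regulous function $g$ on an open set $U\subset V$ I set $f^{\#}_{U}(g):=g\circ f\in\mathcal{R}^{k}_{W}(f^{-1}(U))=(f_{*}\mathcal{R}^{k}_{W})(U)$; this is $k$-regulous because the composition of $k$-regulous maps is $k$-regulous (Corollary~\ref{filtration-cor-3}), and these assignments are compatible with restriction, so they define a morphism of sheaves of $K$-algebras $f^{\#}:\mathcal{R}^{k}_{V}\to f_{*}\mathcal{R}^{k}_{W}$. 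It is local: if a germ $g$ vanishes at $f(w)$ then $g\circ f$ vanishes at $w$, so the induced stalk map $\mathcal{R}^{k}_{V,f(w)}\to\mathcal{R}^{k}_{W,w}$ carries the maximal ideal into the maximal ideal. Hence $f$ is a morphism of locally ringed spaces.

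The main obstacle is the reverse implication $(1)\Rightarrow(2)$, where the point map must be recovered from the comorphism. Given a morphism of locally ringed spaces $f:W\to V$ with comorphism $f^{\#}$, I put $f_{i}:=f^{\#}(x_{i}|_{V})$, which are $k$-regulous functions on $W$, and claim $f=(f_{1},\ldots,f_{n})$ as a map of sets. Fix $w\in W$ and set $v=f(w)$. By the description of the maximal ideal of the stalk recalled in the first paragraph, $x_{i}-v_{i}\in\mathfrak{m}_{v}$, and the locality of $f^{\#}$ at $w$ forces $f^{\#}(x_{i}-v_{i})=f_{i}-v_{i}\in\mathfrak{m}_{w}$, i.e.\ $f_{i}(w)=v_{i}$. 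Therefore $(f_{1}(w),\ldots,f_{n}(w))=v=f(w)$, which proves the claim, while $f(W)\subset V$ holds since $v\in V$. This locality argument, together with the identification of points of $V$ with maximal ideals of its ring of global sections, is the crux of the proof; all remaining implications are formal.
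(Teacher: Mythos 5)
Your proof is correct and follows essentially the same route as the paper, which states the proposition as an immediate consequence of the fact (derived from Cartan's Theorem~B) that every $k$-regulous function on an affine $k$-regulous subvariety extends to the ambient affine space. Your write-up simply supplies the formal details the paper leaves implicit: the extension property for $(2)\Leftrightarrow(3)$, and the standard locally-ringed-space arguments (locality of the comorphism plus the identification of maximal ideals of stalks with points) for $(1)\Leftrightarrow(2)$.
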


We then call $f:W \to V$ a $k$-regulous map.

\begin{corollary}
Let $W$, $V$ and $X$ be affine $k$-regulous subvarieties of
$K^{m}$, $K^{n}$ and $K^{p}$, respectively. If two maps
$$ g: W \to V \ \ \ \text{and} \ \ \ f: V \to X $$
are $k$-regulous, so is its composition $f \circ g$.
\hspace*{\fill} $\Box$
\end{corollary}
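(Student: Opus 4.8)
The plan is to reduce the statement to the composition theorem already available for $k$-regulous maps of the \emph{ambient} affine spaces, namely Corollary~\ref{filtration-cor-3}. The bridge between maps of subvarieties and maps of affine spaces is furnished by the preceding proposition, whose equivalence $(1)\Leftrightarrow(3)$ lets us pass freely between a $k$-regulous map $W\to V$ and a $k$-regulous extension of it defined on the whole ambient affine space.

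First I would invoke the preceding proposition (implication $(1)\Rightarrow(3)$) to choose $k$-regulous extensions $\tilde{g}\colon K^{m}\to K^{n}$ of $g$ and $\tilde{f}\colon K^{n}\to K^{p}$ of $f$. These exist precisely because $g$ and $f$ are $k$-regulous maps; concretely, by condition $(2)$ each component of $f$ is a $k$-regulous function on $V$, and by the consequence of Cartan's Theorem~B (Theorem~\ref{Cartan-B}) recorded before the preceding proposition, each such function is the restriction to $V$ of a $k$-regulous function on $K^{n}$. Assembling the components yields $\tilde{f}$, and likewise $\tilde{g}$.

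Next I would apply Corollary~\ref{filtration-cor-3} to the two $k$-regulous maps $\tilde{g}$ and $\tilde{f}$ of affine spaces, obtaining that the composite $\tilde{f}\circ\tilde{g}\colon K^{m}\to K^{p}$ is again $k$-regulous. It then remains to check that this composite restricts to $f\circ g$ on $W$. Since $\tilde{g}|_{W}=g$ and $g(W)\subset V$, while $\tilde{f}|_{V}=f$, we get $(\tilde{f}\circ\tilde{g})|_{W}=\tilde{f}\circ g=f\circ g$, and in particular $(f\circ g)(W)\subset X$. Thus $\tilde{f}\circ\tilde{g}$ is a $k$-regulous map $K^{m}\to K^{p}$ that extends $f\circ g$ and carries $W$ into $X$, so the implication $(3)\Rightarrow(1)$ of the preceding proposition, applied now to the subvarieties $W$ and $X$, yields that $f\circ g\colon W\to X$ is a $k$-regulous map, as required.

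The only delicate point is the bookkeeping in the previous step: one must be sure that composing an extension of $f$ with an extension of $g$ genuinely extends $f\circ g$, which hinges on the set-theoretic inclusion $g(W)\subset V$ together with the agreement $\tilde{f}|_{V}=f$. I expect no real analytic or geometric obstacle here, since all the substance---the stability of $k$-regularity under composition on affine spaces---has already been absorbed into Corollary~\ref{filtration-cor-3}, which in turn rests on the filtration of Corollary~\ref{filtration-cor-2} and the density property.
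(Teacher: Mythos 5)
Your proposal is correct and follows essentially the same route as the paper: the paper states this corollary as an immediate consequence of the preceding proposition (whose condition (3) rests on the extension property supplied by Cartan's Theorem~B) together with Corollary~\ref{filtration-cor-3} on composition of $k$-regulous maps of affine spaces. Your extension-compose-restrict bookkeeping, including the check that $(\tilde{f}\circ\tilde{g})|_{W}=f\circ g$ via $g(W)\subset V$, is exactly the argument the paper leaves implicit.
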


It is clear that Cartan's theorems remain valid for quasi-coherent
$k$-regulous sheaves on affine $k$-regulous varieties $V$.

\begin{corollary}\label{Cartan-A1}
The functor $M \longmapsto \widetilde{M}$ gives an equivalence of
categories between the category of
$\mathcal{R}^{k}_{V}(V)$-modules and the category of
quasi-coherent $\mathcal{R}^{k}_{V}$-modules. Its inverse is the
global sections functor
$$ \mathcal{F} \longmapsto H^{0}(V,\mathcal{F}). $$
In particular, every quasi-coherent sheaf $\mathcal{F}$ is
generated by its global sections $H^{0}(V,\mathcal{F})$.
\hspace*{\fill} $\Box$
\end{corollary}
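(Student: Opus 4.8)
The plan is to deduce the statement for an affine $k$-regulous subvariety $V \subset K^{n}$ from the already-established version over $K^{n}$ (Theorems~\ref{Cartan-A} and~\ref{Cartan-B}) by transporting everything across the closed inclusion $j: V \hookrightarrow K^{n}$. Write $A := \mathcal{R}^{k}(K^{n})$, let $J := \mathcal{I}(V) = H^{0}(K^{n},\mathfrak{I}(V))$ be the ideal of $k$-regulous functions on $K^{n}$ vanishing on $V$, and set $B := A/J$. The first step is to record that $B = \mathcal{R}^{k}_{V}(V)$: this is exactly the identification $H^{0}(K^{n},\mathcal{R}^{k}/\mathfrak{I}(V)) = H^{0}(K^{n},\mathcal{R}^{k})/H^{0}(K^{n},\mathfrak{I}(V))$ furnished by Cartan's Theorem~B for $K^{n}$. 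Consequently $\mathcal{R}^{k}_{V}(V)$-modules are precisely the $A$-modules annihilated by $J$.

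Next I would set up two parallel ``extension by zero'' equivalences. On the geometric side, since $j$ is a closed inclusion, the direct image $j_{*}$ carries a quasi-coherent $\mathcal{R}^{k}_{V}$-module $\mathcal{F}$ on $V$ to a quasi-coherent $\mathcal{R}^{k}$-module on $K^{n}$ annihilated by $\mathfrak{I}(V)$ (equivalently, supported on $V$ and a module over $\mathcal{R}^{k}/\mathfrak{I}(V)$), while $j^{-1}$ recovers $\mathcal{F}$; this is the equivalence already noted in the discussion preceding the corollary. On the algebraic side, $B$-modules are by construction the $A$-modules killed by $J$. The key point is then that the functor $M \mapsto \widetilde{M}$ of Theorem~\ref{Cartan-A} restricts to an equivalence between these two subcategories: an $A$-module $M$ is annihilated by $J$ if and only if $\widetilde{M}$ is annihilated by $\mathfrak{I}(V)$, since the tilde functor is exact and $A$-linear and $\widetilde{J} = \mathfrak{I}(V)$ is the ideal sheaf of $V$, so that $\widetilde{JM} = \mathfrak{I}(V)\cdot\widetilde{M}$.

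Composing this restricted equivalence with $j^{-1}$ yields the desired equivalence $M \mapsto \widetilde{M}$ between $\mathcal{R}^{k}_{V}(V)$-modules and quasi-coherent $\mathcal{R}^{k}_{V}$-modules on $V$; concretely, for a $B$-module $M$ the sheaf $\widetilde{M}$ on $V$ is the restriction to $V$ of the $\mathcal{R}^{k}$-module associated to $M$ viewed as an $A$-module. For the inverse and the final assertion, I would use that global sections commute with $j_{*}$, namely $H^{0}(V,\mathcal{F}) = H^{0}(K^{n},j_{*}\mathcal{F})$: hence taking global sections over $V$ recovers the corresponding $A$-module (automatically a $B$-module), and the generation of $\mathcal{F}$ by $H^{0}(V,\mathcal{F})$ follows from the generation of $j_{*}\mathcal{F}$ by $H^{0}(K^{n},j_{*}\mathcal{F})$ granted by Theorem~\ref{Cartan-A}, after restricting back to $V$.

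The only point that demands genuine care --- and the step I expect to be the main obstacle --- is the verification that $j_{*}$ is an exact equivalence onto the subcategory of quasi-coherent $\mathcal{R}^{k}$-modules annihilated by $\mathfrak{I}(V)$ and that it preserves global sections. Both facts rest on Cartan's Theorem~B for $K^{n}$ (Theorem~\ref{Cartan-B}), whose vanishing of higher cohomology makes the affine, not necessarily noetherian, scheme formalism behave exactly as in the classical case. Everything else is a formal transport of the $K^{n}$ statements across the closed embedding.
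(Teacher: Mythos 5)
Your proposal is correct and follows essentially the same route as the paper, which simply declares that ``Cartan's theorems remain valid'' on affine $k$-regulous varieties, relying on exactly the ingredients you spell out: the extension-by-zero equivalence across the closed embedding $V \hookrightarrow K^{n}$, the identification $\mathcal{R}^{k}_{V}(V) = \mathcal{R}^{k}(K^{n})/H^{0}(K^{n},\mathfrak{I}(V))$ furnished by Theorem~\ref{Cartan-B}, and Theorem~\ref{Cartan-A} on $K^{n}$. Your write-up is a careful elaboration of that implicit argument rather than a different proof.
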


\begin{corollary}\label{Cartan-B1}
If $\mathcal{F}$ is a quasi-coherent $k$-regulous sheaf on $V$,
then
$$ H^{i}(V,\mathcal{F}) = 0  \ \ \ \text{for all} \ \ i > 0. $$
\hspace*{\fill} $\Box$
\end{corollary}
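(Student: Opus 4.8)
The plan is to deduce this from the already established Cartan's Theorem~B for the ambient space $K^{n}$ (Theorem~\ref{Cartan-B}) by extending $\mathcal{F}$ by zero across the inclusion of $V$. Write $j: V \hookrightarrow K^{n}$ for the inclusion; since $V$ is a $k$-regulous closed subset of $K^{n}$, this is a closed embedding in the $k$-regulous topology. As recalled in the discussion preceding the corollary, each quasi-coherent $\mathcal{R}^{k}_{V}$-module $\mathcal{F}$ extends trivially by zero to a quasi-coherent $\mathcal{R}^{k}$-module $j_{*}\mathcal{F}$ on $K^{n}$, supported on $V$. The whole argument will consist in identifying the cohomology of $\mathcal{F}$ on $V$ with that of $j_{*}\mathcal{F}$ on $K^{n}$ and then invoking the vanishing already known on $K^{n}$.

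First I would record that for a closed embedding the direct image functor $j_{*}$ is exact on sheaves of abelian groups: its stalk at a point of $V$ is the corresponding stalk of $\mathcal{F}$, while its stalk at a point off $V$ vanishes, and exactness can be tested on stalks. An exact functor has vanishing higher derived functors, so $R^{q}j_{*}\mathcal{F}=0$ for all $q>0$. Consequently the Leray spectral sequence for $j$ collapses and yields a natural isomorphism
$$ H^{i}(V,\mathcal{F}) \cong H^{i}(K^{n}, j_{*}\mathcal{F}) \qquad \text{for all } i \geq 0. $$
It then remains to apply Theorem~\ref{Cartan-B} to the quasi-coherent $\mathcal{R}^{k}$-module $j_{*}\mathcal{F}$, which gives $H^{i}(K^{n}, j_{*}\mathcal{F})=0$ for every $i>0$; combined with the displayed isomorphism this produces $H^{i}(V,\mathcal{F})=0$ for $i>0$, as asserted.

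As an alternative, one could argue directly on the spectrum, exactly paralleling the proof of Theorem~\ref{Cartan-B}: the space $V$ carries an embedding $\iota_{V}$ into $\mathrm{Spec}\left(\mathcal{R}^{k}_{V}(V)\right)$ analogous to the embedding $\iota$ of $K^{n}$ into $\mathrm{Spec}\left(\mathcal{R}^{k}(K^{n})\right)$, and Corollary~\ref{Cartan-A1} supplies the requisite equivalence of categories between $\mathcal{R}^{k}_{V}(V)$-modules and quasi-coherent $\mathcal{R}^{k}_{V}$-modules. The vanishing would then follow from the cohomology of quasi-coherent sheaves on an affine scheme (\cite[Theorem~1.3.1]{Gro}), transported across $\iota_{V}^{-1}$.

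The main point requiring care is not the vanishing itself but the passage from cohomology on the topological space $V$ to cohomology on $K^{n}$, that is, verifying $R^{q}j_{*}\mathcal{F}=0$ for $q>0$ in the present $k$-regulous (equivalently constructible) topology rather than in the Zariski topology of schemes. This reduces to checking that $V$ carries exactly the subspace topology induced from $K^{n}$, so that $j$ is genuinely a closed embedding of topological spaces and the stalk computation above is legitimate. Both facts hold because $V$ inherits its $k$-regulous topology from $K^{n}$ and because the closed subsets of $V$ are precisely the $k$-regulous closed subsets of $K^{n}$ contained in $V$, by the noetherianity of the regulous topology (Proposition~\ref{regul-top}) together with the correspondences of Corollaries~\ref{nullstellen-cor-1} and~\ref{nullstellen-cor-1.5}. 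Once this is settled, the remainder of the proof is entirely formal.
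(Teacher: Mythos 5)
Your proof is correct and is essentially the paper's own (implicit) argument: the paper treats this corollary as immediate from the remark, made just before, that each quasi-coherent $\mathcal{R}^{k}_{V}$-module extends trivially by zero to a quasi-coherent $\mathcal{R}^{k}$-module on $K^{n}$, combined with Theorem~\ref{Cartan-B}; your identification $H^{i}(V,\mathcal{F}) \cong H^{i}\left(K^{n}, j_{*}\mathcal{F}\right)$ via exactness of $j_{*}$ and the collapsing Leray spectral sequence simply spells out the details of that reduction. The only inessential detour is your appeal to noetherianity and to Corollaries~\ref{nullstellen-cor-1} and~\ref{nullstellen-cor-1.5} in order to see that $j$ is a closed topological embedding: this is already immediate, since the paper defines $V$ to carry the $k$-regulous topology inherited from $K^{n}$ (i.e.\ the subspace topology) and $V$ is closed in $K^{n}$.
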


\begin{corollary}\label{B1-cor-1}
The global sections functor
$$ \mathcal{F} \longmapsto H^{0}(V,\mathcal{F}) $$
on the category of quasi-coherent $k$-regulous sheaves on $V$ is
exact. \hspace*{\fill} $\Box$
\end{corollary}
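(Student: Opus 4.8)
The plan is to deduce exactness from the vanishing of higher cohomology, exactly as in the classical algebraic setting; the content lies entirely in right exactness, since the global sections functor $\mathcal{F} \mapsto H^{0}(V,\mathcal{F})$ is left exact by the general formalism of sheaf cohomology.

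First I would take an arbitrary short exact sequence of quasi-coherent $k$-regulous sheaves on $V$,
$$ 0 \longrightarrow \mathcal{F}' \longrightarrow \mathcal{F} \longrightarrow \mathcal{F}'' \longrightarrow 0. $$
At this point one must know that the kernel $\mathcal{F}'$ is again quasi-coherent, so that the sequence genuinely lives in the category under consideration. This is supplied by the equivalence of categories of Corollary~\ref{Cartan-A1}, under which quasi-coherent $\mathcal{R}^{k}_{V}$-modules correspond to $\mathcal{R}^{k}_{V}(V)$-modules and short exact sequences of sheaves correspond to short exact sequences of modules; in particular the category of quasi-coherent $k$-regulous sheaves on $V$ is abelian, and all three terms above are quasi-coherent.

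Next I would pass to the associated long exact sequence in cohomology,
$$ 0 \to H^{0}(V,\mathcal{F}') \to H^{0}(V,\mathcal{F}) \to H^{0}(V,\mathcal{F}'') \to H^{1}(V,\mathcal{F}') \to \cdots, $$
and invoke Corollary~\ref{Cartan-B1} (the regulous Cartan Theorem~B on $V$), which gives $H^{1}(V,\mathcal{F}') = 0$ because $\mathcal{F}'$ is quasi-coherent. Truncating the long exact sequence at this vanishing term then yields the short exact sequence
$$ 0 \to H^{0}(V,\mathcal{F}') \to H^{0}(V,\mathcal{F}) \to H^{0}(V,\mathcal{F}'') \to 0, $$
which is precisely the assertion that $H^{0}(V,-)$ carries short exact sequences to short exact sequences.

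I expect no serious obstacle here: the argument is a routine consequence of Theorem~B. The one point that deserves care — and which I would flag rather than belabour — is the quasi-coherence of the kernel sheaf $\mathcal{F}'$, needed so that Theorem~B may be applied to it; this is precisely what the equivalence of categories with $\mathcal{R}^{k}_{V}(V)$-modules guarantees.
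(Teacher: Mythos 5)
Your proof is correct and is essentially the argument the paper intends: Corollary~\ref{B1-cor-1} is stated as an immediate consequence of Corollary~\ref{Cartan-B1}, i.e.\ one applies the long exact cohomology sequence to a short exact sequence of quasi-coherent $\mathcal{R}^{k}_{V}$-modules and truncates at the vanishing term $H^{1}(V,\mathcal{F}')=0$, exactly as you do. Your added remark that the equivalence of categories (Corollary~\ref{Cartan-A1}) guarantees the category in question is abelian, so that the kernel term is again quasi-coherent, is a sensible point of care but does not change the route.
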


Note that every non-empty $k$-regulous open subset $U$ of $K^{n}$
is an affine $k$-regulous variety. Indeed, if $U = \mathcal{D}\,
(f)$ for a $k$-regulous function $f$ on $K^{n}$
(Corollary~\ref{regul-prop-cor-1}), then $U$ is isomorphic to the
affine $k$-regulous subvariety
$$ V := \mathcal{Z}\, (y f(x) -1) \subset K^{n}_{x} \times K_{y}.
$$

\begin{remark}\label{rem-regulous}
Consider a smooth algebraic subvariety $X$ of the affine space
$K\mathbb{A}^{n}$. We may look at the set $V := X(K)$ of its
$K$-points both as an algebraic variety $X(K)$ and as a
$k$-regulous subvariety $V$ of $K^{n}$. Every function $f: V \to
K$ that is $k$-regulous on $V$ in the second sense remains, of
course, $k$-regulous on $X(K)$ in the sense of the definition from
the beginning of Section~11.

\vspace{1ex}

{\bf Open problem.} The problem whether the converse implication
is true for $k>0$ is unsolved as yet.

\vspace{1ex}

For $k=0$ the answer is in the affirmative and follows immediately
from Theorem~\ref{ext-th}. Indeed, every continuous hereditarily
rational function $f$ on $X(K)$ extends to a continuous rational
function on $K^{n}$, whence $f$ is regulous on $V$. This theorem
was proven for real and $p$-adic varieties in~\cite{K-N}.
\end{remark}

Finally, we wish to give a criterion for a continuous function to
be regulous. It relies on Theorem~\ref{ext-th} on extending
continuous hereditarily rational functions on algebraic
$K$-varieties.

\begin{proposition}\label{criterion}
Let $V$ be an affine regulous subvariety of $K^{n}$ and $f: V \to
K$ a function continuous in the $K$-topology. Then a necessary and
sufficient condition for $f$ to be a regulous function is the
following:

\vspace{1ex}

(*) For every Zariski closed subset $Z$ of $K^{n}$ there exist a
Zariski dense open subset $U$ of the Zariski closure of $V \cap Z$
in $K^{n}$ and a regular function $g$ on $U$ such that
$$ f(x) = g(x)\ \ \ \text{for all} \ \ x \in V \cap Z \cap U. $$
\end{proposition}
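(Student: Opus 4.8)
The plan is to treat the two implications separately. Necessity is the easy direction: if $f$ is regulous on $V$ then, by the characterisation recalled at the beginning of Section~11, it is the restriction to $V$ of a regulous function $\widehat{f}$ on $K^{n}$, i.e.\ of a continuous function on $K^{n}$ that is regular on a Zariski dense open subset. Given a Zariski closed $Z$, I would set $Y:=\overline{V\cap Z}^{Z}$ and apply Proposition~\ref{filtration} to $\widehat{f}$ on the (smooth) affine space $K\mathbb{A}^{n}$, once for each irreducible component of $Y$; this yields on every component a Zariski dense open subset on which $\widehat{f}$ is regular. Discarding the pairwise intersections of the components glues these into a single Zariski dense open $U\subset Y$ carrying a regular function $g$ with $\widehat{f}|_{U}=g$, and since $f=\widehat{f}$ on $V$ we obtain $f=g$ on $V\cap Z\cap U$, which is precisely~(*).

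For sufficiency I would argue by induction on $d:=\dim\overline{V}^{Z}$, the case $d\le 0$ reducing to polynomial interpolation at the finitely many points of $V$. In the inductive step, write $X_{0}:=\overline{V}^{Z}$ and feed $Z=K^{n}$ into~(*) to get a Zariski dense open $U\subset X_{0}$ and a regular function $g$ on $U$ with $f=g$ on $V\cap U$. Put $W_{0}:=X_{0}\setminus U$, so $\dim W_{0}<d$. The set $V_{1}:=V\cap W_{0}$ is again closed and constructible with $\dim\overline{V_{1}}^{Z}<d$, and feeding the Zariski closed sets $W_{0}\cap Z$ into~(*) shows that $f|_{V_{1}}$ satisfies~(*) relative to $V_{1}$; by the induction hypothesis there is then a regulous $F_{1}$ on $K^{n}$ with $F_{1}|_{V_{1}}=f|_{V_{1}}$.

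Using the already-proven necessity (so that $F_{1}$ itself satisfies~(*)), I would replace $f$ by $f-F_{1}|_{V}$; the new $f$ is still continuous, still satisfies~(*), agrees with a rational function on $V\cap U$, and now vanishes identically on $V\cap W_{0}$. It then remains to extend this $f$ to a regulous function on $K^{n}$. The intended mechanism is to produce a continuous hereditarily rational function on $X_{0}(K)$ that restricts to $f$ on $V$ and is regular off $W_{0}(K)$, and then to invoke Theorem~\ref{ext-th} with $X=K\mathbb{A}^{n}$, $Z=X_{0}$ and $W=W_{0}$: this extends it to a continuous hereditarily rational function on $K^{n}$ regular off $W_{0}(K)$, which, as $\dim W_{0}<n$, is regular on a Zariski dense open subset and hence regulous. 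Adding back $F_{1}$ then gives the required $F$ with $F|_{V}=f$.

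The hard part will be the construction of that continuous hereditarily rational function on $X_{0}(K)$. Two difficulties must be confronted: the rational representative supplied by~(*) need not be continuous across $W_{0}$, since away from $V$ it may acquire poles along the lower-dimensional stratum; and $V$ itself may be strictly smaller than $X_{0}(K)$, missing singular or $K$-isolated points of $X_{0}$. The tool for both is exactly the apparatus already built into Theorem~\ref{ext-th}: a finite composite of blow-ups along smooth centres transforming the representative to a normal crossing, after which the \L{}ojasiewicz inequality (Proposition~\ref{L-2}) makes the vanishing of $f$ along $V\cap W_{0}$ dominate the polar locus, and the descent property (Corollary~\ref{clo-th-cor-4}) brings the tamed function back down. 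The role of the inductive subtraction above, which arranges that $f$ vanishes on the lower stratum, is precisely to make the hypotheses of Theorem~\ref{ext-th} available, so that this extension theorem carries out the decisive step.
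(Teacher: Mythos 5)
Your necessity argument is fine and is essentially the paper's (which invokes Corollary~\ref{filtration-cor-2} together with Corollary~\ref{B-cor-1}), and your overall strategy for sufficiency---induction on dimension feeding into Theorem~\ref{ext-th}---is also the paper's. But the step you yourself flag as ``the hard part'' is a genuine gap, not a technicality, and the machinery you point to cannot close it. Theorem~\ref{ext-th}, and the blow-up/\L{}ojasiewicz/descent apparatus inside its proof, extends a function that is \emph{already} defined, continuous and hereditarily rational on all of $Z(K)$; it does not manufacture values at the points of $X_{0}(K)\setminus V$. After your subtraction you know that $f$ vanishes on $V\cap W_{0}$ and agrees with a regular function $\tilde{g}$ on $V\cap U$, but continuity of the would-be extension (say $\tilde{g}$ on $U(K)\cap X_{0}(K)$ and zero on $W_{0}(K)\setminus V$) requires $\tilde{g}(x)\to 0$ as $x$ approaches $W_{0}(K)$ through points of $X_{0}(K)$ \emph{outside} $V$---and nothing controls $\tilde{g}$ off $V$: the vanishing of $f$ along $V\cap W_{0}$ is a condition on $V$ only, and Proposition~\ref{L-2} compares two functions on a common domain; it cannot convert vanishing on $V$ into vanishing on a neighbourhood in $X_{0}(K)$. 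Over a field that is not algebraically closed, $X_{0}(K)$ may well contain $K$-points (singular points, points not adherent to the smooth locus) at which no amount of blowing up of the rational representative will produce the right values.

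The paper closes exactly this gap with a different stratification. Instead of $W_{0}=X_{0}\setminus U$, it takes $W^{*}:=W\setminus W^{0}$, where $W$ is the Zariski closure of $V$ (of dimension $d$) and $W^{0}$ is the locus of points where $W$ is smooth of dimension $d$. The key point, coming from the structure of closed constructible sets (Corollary~\ref{correspond-prop} and the discussion preceding it), is that $W^{0}\cap K^{n}\subset V$, hence $W(K)=V\cup(W^{*}\cap K^{n})$: every $K$-point of $W$ missing from $V$ lies in the lower-dimensional Zariski closed set $W^{*}$. One then applies the induction hypothesis to $f|_{V\cap W^{*}}$ (which inherits condition (*), as you correctly argued for your $V_{1}$), extends it via Cartan's Theorem~B (Corollary~\ref{B-cor-1}) to a regulous function $F$ on $K^{n}$, and glues $f$ with $F|_{W^{*}\cap K^{n}}$; since $V$ and $W^{*}\cap K^{n}$ are closed in the $K$-topology and the two functions agree on the overlap $V\cap W^{*}$, the glued function is continuous on all of $W(K)$ and satisfies (*), i.e.\ is hereditarily rational. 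Now Theorem~\ref{ext-th} applies as a black box and yields a regulous $G$ on $K^{n}$ with $G|_{V}=f$. No subtraction trick is needed, and the internal machinery of Theorem~\ref{ext-th} is never re-run. If you replace your choice of $W_{0}$ by the singular locus $W^{*}$ and add this gluing step, your induction goes through.
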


\begin{proof}
By Corollary~\ref{filtration-cor-2}, the necessary condition is
clear, because $f$ is the restriction to $V$ of a regulous
function on $K^{n}$ (Corollary~\ref{B-cor-1} to Cartan's
Theorem~B).

In order to prove the sufficient condition, we proceed with
induction with respect to the dimension $d$ of the set $V$ which
is a closed (in the $K$-topology) constructible subset of $K^{n}$.
The case $d=0$ is trivial. Assuming the assertion to hold for
dimensions less than $d$, we shall prove it for $d$. So suppose
$V$ is of dimension $d$. By Corollary~\ref{correspond-prop}, the
Zariski closure $W$ of $V$ in $K^{n}$ is of dimension $d$ and we
have
$$ W^{0} := \{ x \in W: \ W \ \text{is smooth of dimension} \ d \
   \text{at} \ x \} \subset V. $$
Obviously, $W^{*} := W \setminus W^{0}$ is a Zariski closed subset
of $K^{n}$ of dimension less than $d$. Therefore $Y := V \cap
W^{*}$ is a regulous closed subset of $V$ of dimension less that
$d$ and
$$ W = W^{0} \cup W^{*} \subset V \cup W^{*} \subset W. $$
Since the restriction $f|Y$ satisfies condition (*), it is a
regulous function on $Y$ by the induction hypothesis. It is thus
the restriction to $Y$ of a regulous function $F$ on $K^{n}$
(Corollary~\ref{B-cor-1} to Cartan's Theorem~B).

Further, the function $f$ and the restriction $F|W^{*}$ can be
glued to a function
$$ g: W = V \cup W^{*} \to K, \ \ \ g(x) =
   \left\{
     \begin{array}{ll}
        f(x)\, : & x \in V \\
        F(x):    & x \in W^{*}
     \end{array}
   \right.
$$
which satisfies condition (*) as well. Now, it follows from
Theorem~\ref{ext-th} that $g$ extends to a regulous function $G$
on $K^{n}$. Since $f$ is the restriction to $V$ of the function
$G$ which is regulous, so is $f$, as required.
\end{proof}

\vspace{3ex}

{\bf Acknowledgements.} The author wishes to express his gratitude
to J\'{a}nos Koll\'{a}r and Wojciech Kucharz for several
stimulating discussions on the topics of this paper.

\vspace{1ex}


\vspace{5ex}

\begin{small}
Institute of Mathematics

Faculty of Mathematics and Computer Science

Jagiellonian University


ul.~Profesora \L{}ojasiewicza 6

30-348 Krak\'{o}w, Poland

{\em e-mail address: nowak@im.uj.edu.pl}
\end{small}

\end{document}